\title{Berry--Esseen-type estimates for random variables with a sparse dependency graph} 
\author{Maximilian \textsc{Janisch}\footnote{\scriptsize Institut für Mathematik, Universität Zürich. E-Mail: \url{maximilian.janisch@math.uzh.ch} or \url{mail@maximilianjanisch.com}.}\qquad Thomas \textsc{Leh{\'e}ricy}\footnote{\scriptsize Institut für Mathematik, Universität Zürich. E-Mail: \url{thomas.lehericy@math.uzh.ch}.}} 
\date{February 27, 2023} 
\DeclareMathOperator{\Cov}{Cov} 
\newtheorem{theorem}{Theorem} 
\newtheorem{corollary}[theorem]{Corollary} 
\newtheorem{conjecture}[theorem]{Conjecture} 
\newtheorem{lemma}[theorem]{Lemma} 
\newtheorem{proposition}[theorem]{Proposition} 
\theoremstyle{remark} 
\newtheorem{remark}[theorem]{Remark} 
\theoremstyle{definition} 
\newtheorem{definition}[theorem]{Definition} 
\newtheorem{example}[theorem]{Example} 
\newcommand{\eqdef}{\define} 
\newcommand{\ind}[1]{\mathds{1}_{#1}} 
\newcommand{\dkol}{\ensuremath{d_{\text{Kol}}}}
\renewcommand{\d}{\,\mathrm{d} } 
\newcommand{\cste}{\mathrm{const}\xspace} 
\newcommand{\e}{\mathrm{e}} 
\renewcommand{\i}{\ensuremath{\mathrm{i}}} 
\DeclareMathOperator{\argmin}{argmin} 
\newcommand{\ulim}[2]{\underset{#1\to #2}\longrightarrow}
\renewcommand{\abs}[1]{\left| #1 \right|}
\numberwithin{theorem}{section} 
\numberwithin{equation}{section} 
\newtheorem*{examples*}{Examples} 
\begin{document} 
\maketitle

\begin{abstract} 
	We obtain Berry--Esseen-type bounds for the sum of random variables with a dependency graph and uniformly bounded moments of order $\delta \in (2,\infty]$ using a Fourier transform approach. Our bounds improve the state-of-the-art in the regime where the degree of the dependency graph is large. As a Corollary of our results, we obtain a Central Limit Theorem for random variables with a sparse dependency graph that are uniformly bounded in $L^{\delta}$ for some $\delta\in(2,\infty]$. 
\end{abstract}

\tableofcontents

\section{Introduction}

\index{aal-S@$S$} 
\index{aal-N@$N$} 
\index{aal-v@$v\define\sqrt{\V[S]}$} 
\index{aal-Yk@$(Y_k)_{k\in\N}$} 
\index{aal-rv@r.v.: random variable} 
\index{aal-iid@i.i.d.: independent and identically distributed} 
 
Consider 
$(Y_{k})_{k \in V}$ a family of $L^2$ real random variables (r.v.) indexed by a finite set $V$ of cardinality $N\in\N=\Z_{\ge 1}$, and let $S \eqdef \sum_{k \in V} Y_k$. We do not assume that they are independent. 
We want to study if and at what speed the random variable 
\begin{equation}\label{eq:W} 
W \define \frac{S-\E[S]}{\sqrt{\V[S]}} 
\end{equation} 
converges to a normal distribution. Our results are non-asymptotic, meaning that $V$ is fixed; our results can therefore be directly adapted to hold for triangular arrays $(Y_{m,k})_{m\in\N,k\in\set{1,\dots,N(m)}}$ of random variables for a fixed sequence $(N(m))_{m\in\N}$ in $\N$ going to $\infty$. 
 
In the simplest situation, $V = \{1, \dots, N\}$ and the $(Y_k)_{k\in V}$ are independent and identically distributed (i.i.d.). The central limit theorem ensures that $W$ 
converges, in distribution as $N\to\infty$, to a standard-normally distributed random variable. If the $(Y_k)_{k\in V}$ have a finite third moment we can bound the speed at which the distribution of $W$ converges to the standard normal distribution $\kN(0,1)$. The usual metric used to quantify this speed of convergence is the \emph{Kolmogorov distance}, defined for two real random variables $X,Y$ with distributions $\P_X,\P_Y$ as \index{aal-dkol@$\dkol(X,Y)\define\sup_{t\in\R}\abs{\P[X\le t]-\P[Y\le t]}$} 
\begin{equation*} 
	\dkol(X,Y)=\dkol(X, \P_Y)=\dkol(\P_X, Y)=\dkol(\P_X,\P_Y)\define\sup_{t\in\R}\abs{\P(X\le t)-\P(Y\le t)}. 
\end{equation*} 
Note that $\dkol(X,Y)\leq 1$ for every $(X,Y)$. 
Other metrics can be used, for example the Wasserstein-1 distance. 
Berry and Esseen proved independently that 
\begin{equation}\label{eq:Berry-Esseen-classical} 
	\dkol(W, \kN(0,1))\le \frac{C}{\sqrt{N}}\frac{\E[\abs{Y_1}^3]}{\V[Y_1]^{\frac 32}}. 
\end{equation} 
Here $C$ is a constant independent of $V$ and $(Y_k)_{k\in V}$. Esseen showed that \eqref{eq:Berry-Esseen-classical} holds for $C=7.6$. The constant has been improved to $C\leq 0.5583$, cf. \cite{Shevtsova2013}. 
The case of independent but non-identically distributed r.v. is well-known: the central limit theorem was generalized by Lindeberg in 1922 under his eponymous condition, and 
\cite{Shevtsova2013} gives for $C=0.5583$ 
\begin{equation}\label{eq:Berry-Esseen-non-iid} 
	\dkol(W,\kN(0,1))\le C \frac{\sum_{k\in V}\E[\abs{Y_k}^3]}{(\V[S])^\frac32}. 
\end{equation} 
For a different constant, \eqref{eq:Berry-Esseen-non-iid} was proven in \cite{Esseen1944}.

The case where $(Y_k)_{k\in V}$ are not independent is much more difficult. In general, we do not even have convergence (as $N\to\infty$) of $W$ to a normal distribution if we replace the condition of independence of the $(Y_k)_{k\in V}$ by pairwise independence. Indeed, as shown in \cite{Janson1988}, there exists a stationary sequence $(Y_n)_{n\in\N}$ of pairwise independent real $L^\infty$-random variables with strictly positive variance such that $W$ defined above converges in distribution to $0$ as $n\to\infty$, not to a standard normal distribution. In fact, the situation is even worse: For any fixed $m\in\N$, there exists a sequence of identically distributed real $L^\infty$-random variables $(Y_n)_{n\in\N}$ such that any sub-family $(Y_{n_1},Y_{n_2},\dots, Y_{n_m})$ of $(Y_n)_{n\in\N}$ with $\set{n_1,\dots, n_m}\subset\N$ is independent, but such that $(W_n)_{n\in\N}$ does not satisfy the Central Limit Theorem. See \cite{Pruss1998} for a proof of this result. 
 
The need to specify the \emph{way} the random variables depend on each other is therefore crucial. We are interested in dependency graphs \cite{Rinott89, feray2017mod}, a slight generalization of which 
is the notion of local dependence \cite{ChenShao}; 
other notions include weighted dependency graphs \cite{feray2018weighted}, Markov chains and fields, 
or functions of independent random variables \cite{daniel2014phd}, among others. Our focus lies in Berry--Esseen-type bounds, which naturally yield a Central Limit Theorem, albeit with suboptimal conditions; for Central Limit Theorems for random variables which are only partially independent, see for example \cite{Orey1958} and more recently \cite{Janson2021}.

\paragraph{Dependency graph.} 
A simple graph $G$ on $V$ (its \emph{vertex set}) is an ordered tuple $(V,E)$, where $E$ is a set of pairs (called \emph{edges}) of distinct vertices of $V$. If two vertices belong to the same edge, we say that they are \emph{adjacent} to each other. If a vertex $v$ belongs to an edge $e$, we say that \emph{$v$~is~incident~to~$e$}. The \emph{degree} of a vertex is the number of edges it belongs to. 
We also consider multigraphs, where multiple edges are allowed: this corresponds to allowing $E$ to be a multiset. In the case of multigraphs, we also allow loops, i.e. edges whose endpoints are the same vertex. 
 
We say that a multigraph $G$ is a \emph{dependency graph of $(Y_k)_{k \in V}$} if and only if the following property holds: If $V_1,V_2\subset V$ are disjoint and there exist no edge with one vertex in $V_1$ and the other in $V_2$, then the families of random variables $(Y_k)_{k\in V_1}$ and $(Y_k)_{k\in V_2}$ are independent. For a deeper introduction to dependency graphs, see for example the remarks following \cite[Definition 9.1.1]{feray2013mod}. 
 
Dependency graphs can describe independent random variables: $(Y_{k})_{k \in V}$ is independent if and only if they admit the graph on $V$ with no edge as a dependency graph.

Our main results concerns $\dkol(W,\kN(0,1))$ when the $(Y_k)_{k\in V}$ admit a dependency graph. The bound is state-of-the-art when the variables are in $L^\infty$, and beats the state-of-the-art in certain regimes when the variables are in $L^\delta$ for some $\delta<\infty$, see Section \ref{sec:direct-comparison}. 
For conciseness, we write $a\lesssim b$ to mean $a\le K b$ for an absolute constant $K\ge 0$. 
 
Throughout the text, we assume that the family $(Y_k)_{k\in V}$ has a dependency graph of maximal degree $D\in\Z_{\ge 0}$. 
\begin{definition}\label{def:renormedSD} 
	Assume that for some $\delta\in(2,\infty)$, all $Y_k$ are in $L^\delta$. Define the \emph{renormalized standard deviation} 
	\begin{equation}\label{eq:xi-delta} 
		\xi_\delta\define\left(\frac{N}{\kA_\delta}\right)^{\frac 1\delta}\sqrt{\frac{\V[S]}{N(D+1)}}, 
	\end{equation} 
	where 
	\begin{equation*} 
		\kA_\delta =\sum_{k\in V}\E[\abs{Y_k-c_k}^\delta] 
	\end{equation*} 
	\index{aal-ck@$c_k$} 
	\index{aal-D@$D$} 
	\index{aal-calAk@$\kA_\delta\define\sum_{k\in V}\E[\abs{Y_k-c_k}^\delta]$} 
	\index{aag-xidelta@$\xi_\delta\define\left(\frac N{\kA_\delta}\right)^{\frac 1\delta}\sqrt{\frac{\V[S]}{N(D+1)}}$} 
	for an arbitrary family of real numbers $(c_k)_{k\in V}$. 
\end{definition} 
 
Proposition \ref{prop:decrease-xi} guarantees that $\xi_\delta \in [0,1]$ and that $\xi_\delta$ is a decreasing function of $\delta$. 
In what follows we fix the sequence $(c_k)_{k\in V}$. 
Reasonable choices are $c_k = 0$ or $c_k = \E[Y_k]$, but other values could be considered, for example $c_k \in 
\argmin_c \| Y_k - c \|_\delta$. 
 
The Berry--Esseen inequality \eqref{eq:Berry-Esseen-non-iid} for independent random variables with finite moments of order $L^\delta$ for some $\delta\in(2,3]$ is, up to the absolute constant, equivalent to the following Theorem. 
\begin{theorem}[Cf. {\cite[Theorem V.3.6]{Petrov}}]\label{thm:Berry-Essen-general} 
	Let $(Y_k)_{k\in V}$ be a family of $\abs{V}=N$ independent random variables (in particular they admit the empty dependency graph with $D=0$). Assume that $\kA_\delta<\infty$ for some $\delta\in(2,3]$. Then 
	\begin{equation}\label{eq:Berry-Esseen-general} 
	\dkol\left(W, \kN(0,1)\right)\lesssim \xi_\delta^{-\delta} \left(\frac 1N\right)^{\frac{\delta-2}2}. 
	\end{equation} 
\end{theorem}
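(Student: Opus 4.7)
The plan is to reduce the statement to a direct citation of Petrov's classical Berry--Esseen inequality and then verify that the normalized quantity $\xi_\delta^{-\delta}N^{-(\delta-2)/2}$ is exactly the Lyapunov-type ratio appearing in Petrov's bound, up to an absolute constant.

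\textbf{Step 1: algebraic simplification.} With $D=0$ the definition \eqref{eq:xi-delta} gives
\begin{equation*}
\xi_\delta^{-\delta}
= \frac{\kA_\delta}{N}\left(\frac{N}{\V[S]}\right)^{\delta/2},
\end{equation*}
hence
\begin{equation*}
\xi_\delta^{-\delta}\left(\frac{1}{N}\right)^{(\delta-2)/2}
= \frac{\kA_\delta}{\V[S]^{\delta/2}}
= \frac{\sum_{k\in V}\E\bigl[|Y_k-c_k|^\delta\bigr]}{\V[S]^{\delta/2}}.
\end{equation*}
Thus the inequality to prove reduces to the classical Lyapunov-type Berry--Esseen bound with the centering constants $c_k$ in place of the means.

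\textbf{Step 2: reduction to centered variables.} Since $W$ in \eqref{eq:W} is unchanged under translation of each $Y_k$, I may assume without loss of generality that $\E[Y_k]=0$ for every $k\in V$. In this case, for each $k$, the inequality $|c_k|=|\E[Y_k-c_k]|\le\E[|Y_k-c_k|]\le\|Y_k-c_k\|_\delta$ combined with Minkowski's inequality gives
\begin{equation*}
\E[|Y_k|^\delta]^{1/\delta}=\|Y_k\|_\delta\le\|Y_k-c_k\|_\delta+|c_k|\le 2\,\|Y_k-c_k\|_\delta,
\end{equation*}
so $\sum_{k\in V}\E[|Y_k|^\delta]\le 2^\delta\,\kA_\delta\le 8\,\kA_\delta$ (as $\delta\le 3$). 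Consequently it suffices to prove
\begin{equation*}
\dkol\bigl(W,\kN(0,1)\bigr)\lesssim \frac{\sum_{k\in V}\E[|Y_k|^\delta]}{\V[S]^{\delta/2}}.
\end{equation*}

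\textbf{Step 3: invocation of Petrov.} This last inequality is precisely the content of \cite[Theorem V.3.6]{Petrov} applied to the independent centered family $(Y_k)_{k\in V}$ with Lyapunov exponent $\delta\in(2,3]$. Combining Steps~1--3 yields \eqref{eq:Berry-Esseen-general}.

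The only substantive point is the algebraic identity of Step~1, which has been designed so that the parameter $\xi_\delta$ captures the correct scaling; Step~2 is a routine Minkowski argument and Step~3 is a direct appeal to the literature. There is no genuine obstacle: this statement is essentially a repackaging of the classical theorem in the notation that will be used throughout the paper.
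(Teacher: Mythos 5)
Your proof is correct and follows essentially the same route as the paper, which states Theorem~\ref{thm:Berry-Essen-general} as a restatement of Petrov's classical bound without giving a separate proof. You verify the algebraic identity $\xi_\delta^{-\delta} N^{-(\delta-2)/2}=\kA_\delta/\V[S]^{\delta/2}$ at $D=0$ and the harmless Minkowski step reducing the arbitrary centering $c_k$ to $\E[Y_k]$, both of which are exactly the routine checks needed to see why the statement is a repackaging of Petrov's Theorem~V.3.6.
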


We conjecture that \eqref{eq:Berry-Esseen-general} remains true even for random variables with a dependency graph, if the term $\frac 1N$ is replaced with $\frac{D+1}N$. 
 
\begin{conjecture}\label{conj:main} 
	Let $(Y_k)_{k\in V}$ be a family of $\abs{V}=N$ random variables admitting a dependency graph of degree $D\in\Z_{\ge 0}$. Assume that there exists $\delta\in(2,3]$ such that $\kA_\delta<\infty$. Then 
	\begin{equation}\label{eq:main-conj} 
	\dkol\left(W, \kN(0,1)\right)\lesssim \xi_\delta^{-\delta} \left(\frac {D+1}N\right)^{\frac{\delta-2}2}. 
	\end{equation} 
\end{conjecture}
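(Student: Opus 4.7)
My approach is Fourier-analytic, combining Esseen's smoothing inequality with a differential identity for the characteristic function $\phi_W(t) := \E[e^{itW}]$. Write $\sigma := \sqrt{\V[S]}$ and $\tilde Y_k := (Y_k - \E[Y_k])/\sigma$, so that $W = \sum_k \tilde Y_k$ has mean zero and variance one. Esseen's lemma gives, for any $T > 0$,
\[
\dkol(W, \kN(0,1)) \lesssim \int_{-T}^{T} \frac{|\phi_W(t) - e^{-t^2/2}|}{|t|}\,dt + \frac{1}{T},
\]
so the task reduces to establishing a pointwise bound of the form $|\phi_W(t) - e^{-t^2/2}| \lesssim B\,|t|^\delta\, e^{-c t^2}$ on a window $|t| \leq B^{-1/\delta}$, where $B := (D+1)^{\delta-1} \sigma^{-\delta} \kA_\delta$. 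A direct computation from the definition of $\xi_\delta$ gives $B = \xi_\delta^{-\delta}\bigl((D+1)/N\bigr)^{(\delta-2)/2}$ and $B^{-1/\delta} = \xi_\delta \bigl(N/(D+1)\bigr)^{(\delta-2)/(2\delta)}$, so optimizing $T$ in the smoothing inequality would recover exactly \eqref{eq:main-conj}.

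To produce the pointwise bound I would exploit the ODE $\psi' = -t\psi$ satisfied by $\psi(t) := e^{-t^2/2}$ and argue that $\phi_W$ approximately satisfies the same equation. For each $k \in V$ let $N_k$ be the closed neighborhood of $k$ in the dependency graph, with $|N_k| \leq D+1$, and set $S_k := \sum_{j \in N_k} \tilde Y_j$ and $S_k^c := W - S_k$. The defining property of the dependency graph makes the pair $(\tilde Y_k, S_k)$ independent of $S_k^c$, so
\[
\phi_W'(t) \;=\; i\sum_k \E[\tilde Y_k e^{itW}] \;=\; i\sum_k \E[\tilde Y_k e^{itS_k}]\cdot \E[e^{itS_k^c}].
\]
Using $\E[\tilde Y_k]=0$, expanding $e^{itS_k} = 1 + itS_k + O(|tS_k|^{\delta-1})$ via $|e^{ix}-1-ix| \lesssim |x|^{\delta-1}$, and replacing $\E[e^{itS_k^c}]$ by $\phi_W(t)$ at the cost of $O(|t|\|S_k\|_1)$, the leading contribution becomes $-t\phi_W(t)\cdot\sum_k \E[\tilde Y_k S_k]$. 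The normalization $\sum_k \E[\tilde Y_k S_k] = \V[W] = 1$ is automatic because $\E[\tilde Y_k \tilde Y_j] = 0$ whenever $j \notin N_k$. The Taylor remainder is controlled by Hölder combined with the crude $\ell^\delta$-triangle estimate $\sum_k \|S_k\|_\delta^\delta \leq (D+1)^\delta \sum_j \|\tilde Y_j\|_\delta^\delta$, which yields $|\phi_W'(t) + t\phi_W(t)| \lesssim B\,|t|^\delta$. Integrating with integrating factor $e^{t^2/2}$ and initial condition $\phi_W(0) = 1$ then produces the desired pointwise estimate on $\phi_W - \psi$.

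The principal obstacle is extending this estimate all the way up to the critical window $|t| \lesssim B^{-1/\delta}$. The replacement $\E[e^{itS_k^c}] \approx \phi_W(t)$ costs $O(|t|\|S_k\|_1)$ per term, and the first-order Taylor expansion of $e^{itS_k}$ becomes inefficient once $|tS_k|$ is no longer small; beyond $|t| \sim 1$, the representation $\phi_W(t) - \psi(t) = \psi(t)\int_0^t e^{s^2/2}E(s)\,ds$ develops a growing Gaussian factor that must be tamed either by (i) expanding $e^{itS_k}$ to higher order, which reintroduces an extra power of $D$ through $\E[\tilde Y_k S_k^2]$ and therefore demands a careful cancellation argument to keep the $(D+1)$-dependence at the correct exponent, or (ii) a bootstrap that first proves a weaker decay estimate on $|\phi_W|$ itself and feeds it back into the replacement step. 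In the $L^\infty$ regime, sub-Gaussian concentration makes this window reachable, but for $\delta \in (2, 3]$ only $L^\delta$ integrability is available, so the iteration becomes considerably more delicate; this is where I expect the technical heart of the proof to lie.
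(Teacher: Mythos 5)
This statement is a \emph{conjecture}: the paper explicitly states, right after stating it, that ``the conjecture remains out of reach of current techniques, both ours and Stein's method.'' The paper's own theorems (Theorems \ref{thm:dkol-delta-3+} and \ref{thm:result-refined-delta<2}) only achieve the weaker exponent $\tfrac{\delta-2}{2(\delta+1)}$, not $\tfrac{\delta-2}{2}$. So a complete proof of the conjecture would be a genuine advance; unfortunately your sketch does not provide one.

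Beyond the difficulties you acknowledge, there is a gap right at the launching point of the argument. You claim the pair $(\tilde Y_k, S_k)$ is independent of $S_k^c = W - S_k$, and use this to factor
\[
\E[\tilde Y_k e^{itW}] = \E[\tilde Y_k e^{itS_k}] \cdot \E[e^{itS_k^c}].
\]
The dependency-graph property gives independence between $(\tilde Y_j)_{j\in V_1}$ and $(\tilde Y_j)_{j\in V_2}$ when \emph{no edge joins $V_1$ to $V_2$}. Taking $V_1 = N_k$ (the closed neighborhood of $k$) and $V_2 = V\setminus N_k$, this condition is violated whenever a neighbor of $k$ has a further neighbor outside $N_k$ — for instance already for the path graph $1 - 2 - 3$ with $k=1$, where $\tilde Y_2 \in S_1$ and $\tilde Y_3 = S_1^c$ may be dependent. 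What is true is only the weaker fact that $\tilde Y_k$ itself is independent of $S_k^c$. With just that, after extracting $\E[\tilde Y_k]\E[e^{itS_k^c}]=0$, you are left with the term $it\,\E[\tilde Y_k S_k e^{itS_k^c}]$, and $\tilde Y_k S_k$ is \emph{not} independent of $S_k^c$. This is precisely the obstruction that forces Stein-type arguments for local dependence (e.g.\ Chen--Shao) to iterate over second-order neighborhoods, at the cost of extra powers of $D$ — which is exactly why Theorem \ref{thm:Stein-dkol} in the paper ends up with a $(D+1)^{4\delta-4}$ loss rather than the conjectured sharp dependence.

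Even granting the factorization, you correctly identify that extending the pointwise bound $|\phi_W(t) - e^{-t^2/2}|\lesssim B|t|^\delta e^{-ct^2}$ up to the critical window $|t|\lesssim B^{-1/\delta}$ is the technical heart, and you leave that step open. For comparison, the paper takes a different route entirely: a cumulant bound for truncated variables (Lemma \ref{lem:better-cumulants}), a transfer to a control on $\log\E[e^{i\xi W}]$ via the cumulant expansion (Lemmas \ref{lem:zone-of-control-bounded} and \ref{lem:zone-of-control-unbounded}), and then the same Feller/Esseen smoothing step. The bottleneck there is that truncation at level $L$ creates a variance-adjustment error of size $(D+1)L^{2-\delta}\kA_\delta$, and the window where the cumulant bounds are usable is narrower than the one you are targeting; this is what produces the exponent $\tfrac{\delta-2}{2(\delta+1)}$ after optimizing over $L$. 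Closing the gap to $\tfrac{\delta-2}{2}$ is an open problem, and your sketch, as written, does not resolve the two obstructions (false independence; unreached window) that stand in the way.
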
 
 
While the conjecture remains out of reach of current techniques, both ours and Stein’s method, our results provide evidence in favor of the conjecture. In particular, our results support the heuristic that the substitution of $1/N$ by $(D+1)/N$ is the correct adaptation. We briefly present arguments in favor of the conjecture at the end of Section \ref{sec:optimality}. 
 
We present our results in Section \ref{sec:main-results}, and investigate the regime where they do better than the current state-of-the-art (mostly obtained by Stein’s method) in Section \ref{sec:previous-results}. The optimality of our results is discussed in Section \ref{sec:optimality}. Section \ref{sec:applications} gathers some applications of our results: Central Limit Theorems in Section \ref{sec:CLTs}, U-statistics in Section \ref{sec:U-statistics}, and volatility estimation for prices of financial assets in Section \ref{sec:stock-volatility}. We gather the proof of our main results in Section \ref{sec:refined}. A concise explanation of our proof ideas can be found in Section \ref{sec:overview}.

\section{Main results} 
\label{sec:main-results} 
We assume that the random variables all belong to some $L^\delta$ for $\delta\in(2,\infty]$, equipped with $\| \cdot \|_\delta$, the $L^\delta$ norm. Define $v = \sqrt{\V[S]}$. Recall $\kA_\delta$ and $\xi_\delta$ as in Definition \ref{def:renormedSD}. In Theorem \ref{thm:mod-phi-corollary-new}, which has the sharpest bound, we consider the case $\delta=\infty$; in Theorem \ref{thm:dkol-delta-3+}, the case $\delta \in [3,\infty)$; and in Theorem \ref{thm:result-refined-delta<2}, the case $\delta\in(2,3]$.

\begin{theorem}\label{thm:mod-phi-corollary-new} 
	Assume that $v>0$ and that 
	 $\max_{k\in V} \|Y_k-c_k\|_\infty \leq L$ for some $L\in(0,\infty)$. 
	Then\index{aag-deltaprime@$\delta'\define\min(\delta,3)$} 
	\begin{align}\label{eq:mod-phi-corollary-new} 
		\dkol\left(W, \kN(0,1)\right) \leq \max\left\{ 68.5 \frac{(D+1)^2 \kA_3}{v^3} \ , \ 22.88 \frac{L(D+1)}{v} \right\} . 
	\end{align} 
\end{theorem}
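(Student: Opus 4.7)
The strategy is Fourier-analytic, based on Esseen's smoothing inequality together with a cumulant expansion of $\log \varphi_W$ that exploits the dependency graph structure. Writing $\varphi_W$ for the characteristic function of $W$, Esseen's inequality gives, for any $T>0$,
\[
\dkol(W,\kN(0,1))\leq \frac{1}{\pi}\int_{-T}^{T}\frac{\abs{\varphi_W(t)-e^{-t^{2}/2}}}{\abs{t}}\,\d t + \frac{C_0}{T},
\]
so the task reduces to controlling $\varphi_W(t)$ on a suitably chosen window $[-T,T]$ and optimising the cutoff $T$.

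For the characteristic function I would employ the cumulant expansion
\[
\log\varphi_W(t) = -\frac{t^2}{2} + \sum_{r\geq 3}\frac{(\i t)^r}{r!\,v^r}\,\kappa_r(S),
\]
where $\kappa_r(S)$ denotes the $r$-th cumulant of $S$. The crucial input is a bound for the cumulants of a sum whose dependency graph has maximal degree $D$: for some absolute $C>0$ and all $r\geq 2$,
\[
\abs{\kappa_r(S)} \leq (Cr)^r (D+1)^{r-1}\sum_{k\in V}\|Y_k-c_k\|_\infty^r .
\]
Under $\|Y_k-c_k\|_\infty\leq L$ this turns the tail of the cumulant series into a geometric series in $\abs{t}L(D+1)/v$, so the series converges and is dominated by its leading $r=3$ term, which is exactly of the size $(D+1)^{2}\kA_3\abs{t}^3/v^3$, whenever $\abs{t}L(D+1)/v$ is smaller than an absolute constant. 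Passing from $\log\varphi_W(t)+t^2/2$ to $\varphi_W(t)-e^{-t^2/2}$ then relies on the elementary $\abs{e^z-1}\leq \abs{z}e^{\abs{z}}$.

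The natural cutoff is $T\asymp v/(L(D+1))$: with this choice the $1/T$ term in Esseen's inequality produces a contribution of the form $22.88\,L(D+1)/v$, whereas integrating the $r=3$ cumulant estimate against $1/\abs{t}$ on $[-T,T]$ yields the term $68.5\,(D+1)^{2}\kA_3/v^3$. The formulation as a maximum (rather than a sum) of these two quantities accommodates the trivial regime where one of them already exceeds $1$, in which case $\dkol\leq 1$ yields the bound for free. I expect the main obstacle to be obtaining the cumulant bound with sharp enough numerical constants to recover the stated $68.5$ and $22.88$: this requires a careful combinatorial count of connected set partitions of $V$ that respect the dependency graph, and its interplay with the optimisation of $T$ in Esseen's inequality is what ultimately pins down the numerical factors.
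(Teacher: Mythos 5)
Your overall strategy matches the paper's: Esseen/Feller smoothing, a cumulant expansion of $\log\varphi_W$, a dependency-graph cumulant bound, and a constrained optimization over a cutoff $T\lesssim v/(L(D+1))$. However, the cumulant bound you state cannot produce the quantity $\kA_3=\sum_{k\in V}\E[\abs{Y_k-c_k}^3]$ that appears in the statement. With $\abs{\kappa_r(S)}\leq(Cr)^r(D+1)^{r-1}\sum_{k}\norm{Y_k-c_k}_\infty^r$, the $r=3$ contribution can only be estimated as $(D+1)^2NL^3/v^3$ (since $\norm{Y_k-c_k}_\infty^3\leq L^3$), which is strictly weaker than $(D+1)^2\kA_3/v^3$ in general --- for instance when the $Y_k$ are small with high probability but have essential supremum $L$. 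What is needed is the refinement in Lemma~\ref{lem:better-cumulants}: for $r\geq 3$,
\begin{equation*}
\abs{\kappa^{(r)}(S)}\leq r^{r-2}\,(2(D+1))^{r-1}\,L^{r-3}\,\kA_3,
\end{equation*}
which is obtained via the generalized H\"older inequality and the AM--GM inequality, producing $\sum_k\E[\abs{Y_k-c_k}^r]$ rather than $\sum_k\norm{Y_k-c_k}_\infty^r$, and then interpolating $\E[\abs{Y_k-c_k}^r]\leq L^{r-3}\E[\abs{Y_k-c_k}^3]$ so that the $L^\infty$ bound is charged only for the excess power $r-3$. Without this interpolation your argument proves a weaker theorem with $\kA_3$ replaced by $NL^3$.

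A secondary inaccuracy: the maximum in \eqref{eq:mod-phi-corollary-new} does not arise from invoking $\dkol\leq 1$ in a trivial regime. It arises structurally because the cumulant/zone-of-control estimate (Lemma~\ref{lem:zone-of-control-bounded}) holds only for $\abs{t}\leq v/(2\e L(D+1))$, so the cutoff $T$ is constrained: the infimum of the Esseen bound over the admissible range of $T$ equals the unconstrained minimum when the constraint is slack (yielding the $\kA_3$ branch) and the value at the constraint boundary otherwise (yielding the $L(D+1)/v$ branch). Both branches can be attained with bounds well below $1$.
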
 
 
This result is essentially optimal, as discussed in section \ref{sec:optimality}. 
 
\begin{remark}\label{rem:sigma} 
	Theorem \ref{thm:mod-phi-corollary-new} gives a strictly better conclusion (except for the constant) than \cite[Corollary 30]{feray2017mod}, which gives the bound 
	\begin{equation}\label{eq:FMN-estimate} 
	\dkol \left(W, \kN(0,1)\right) \le 76.36 \frac{L^3 N (D+1)^2}{v^{3}}. 
	\end{equation} 
	Indeed, in this case, define 
	 
	$$M_\delta = \max_{k\in V}\,\norm{Y_k-c_k}_\delta \le L \qquad , \qquad \sigma_\delta=\frac 1{M_\delta} \sqrt{\frac{v^2}{N(D+1)}} \in [0,1]. $$ 
	Then 
	\begin{align} 
	\nonumber \max\left\{ \frac{(D+1)^2 \kA_3}{v^3}, \frac{L(D+1)}{v} \right\} &= \sqrt{\frac{D+1}N}\max\left\{\frac{1}{\xi_3^3}\ ,\  \frac{1}{\xi_\delta} \frac{L}{\left(\frac{\kA_\delta}N\right)^\frac{1}\delta}\right\} \\ 
	&\le \sqrt{\frac{D+1}N}\max\left\{\frac 1{\sigma_3^3}, \frac 1{\sigma_3}\frac L{M_3} \right\} \label{eq:infty-sigma} 
	\end{align} 
	since $\xi_\delta (\kA_\delta / N)^{1/\delta} = \sigma_3 M_3$ and $\sigma_3 \leq \xi_3 \leq 1$, and 
	\begin{align*} 
	\frac{L^3 N (D+1)^2}{v^{3}} &= \sqrt{\frac{D+1}{N}}\frac{1}{\sigma_3^3}\left(\frac L{M_3}\right)^3 \geq \sqrt{\frac{D+1}N}\max\left\{\frac 1{\sigma_3^3}, \frac 1{\sigma_3}\frac L{M_3} \right\} 
	\end{align*} 
	since $L\geq M_3$. 
\end{remark} 
 
\begin{remark}\label{rem:Rinott} 
	Under the same assumptions of Theorem \ref{thm:mod-phi-corollary-new} with $c_k=\E[Y_k]$, \cite{Rinott} obtains 
	\begin{equation}\label{eq:Rinott-bound} 
		\dkol\left(W,\kN(0,1)\right)\lesssim\frac 1v\left((D+1)L+\sqrt{\frac{N}{v^2}} (D+1)^\frac32 L^2 + \frac{n}{v^2}(D+1)^2 L^3\right). 
	\end{equation} 
	This is a Corollary of Theorem \ref{thm:mod-phi-corollary-new} because $\kA_3\le N L^3$. Therefore the right-hand side of \eqref{eq:mod-phi-corollary-new} is less or equal than an absolute constant times the maximum of the first term of \eqref{eq:Rinott-bound} and the third term of \eqref{eq:Rinott-bound}. 
\end{remark} 
 
At the cost of a more complicated inequality, Theorem \ref{thm:mod-phi-corollary-new} can be improved to have a more precise upper bound that includes the third centered moment of $S$ (or more generally moments up to any fixed order). This is done in the following Theorem. 
\begin{theorem}\label{thm:mod-phi-corollary-new-exact} 
		Assume that $v>0$ and $\max_{k\in V} \|Y_k-c_k\|_\infty \leq L$ as in Theorem \ref{thm:mod-phi-corollary-new}. Let $\rho = |\E[(S-\E[S])^3]|$. 
		Then 
		\begin{multline}\label{eq:thm-new-exact} 
\dkol\left(W, \kN(0,1)\right) \leq 
 0.607148 \frac{\rho}{v^3} + 116.84 \frac{(D+1)^3 \kA_4}{v^4} \\ 
+ \max \left\{ 16.57\frac{L(D+1)}{v} \ , \ 22.47 \sqrt{\frac{(D+1)^3 \kA_4}{v^4}} + 1.596 \frac{\rho}{v^3} \right\} \ . 
		\end{multline} 
		If the bound of Theorem \ref{thm:mod-phi-corollary-new} is smaller than $1$, then the right-hand side of \eqref{eq:thm-new-exact} is smaller than that of Theorem \ref{thm:mod-phi-corollary-new}, multiplied by $1.06164$. When $c_k = \E[Y_k]$ for every $k\in V$, this multiplicative constant is instead $0.85771$. 
\end{theorem}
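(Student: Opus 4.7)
The plan is to follow the Fourier-analytic route behind Theorem \ref{thm:mod-phi-corollary-new}, but to carry the third cumulant of $S$ through the argument rather than bounding it crudely in terms of $\kA_3$. Writing $\phi_W(t) = \E[\e^{\i t W}]$, Esseen's smoothing inequality gives, for any $T > 0$,
\begin{equation*}
\dkol(W, \kN(0,1)) \lesssim \int_{-T}^{T} \frac{|\phi_W(t) - \e^{-t^2/2}|}{|t|} \d t + \frac{1}{T}
\end{equation*}
with an explicit constant. The cluster bounds on cumulants of families admitting a dependency graph from \cite{feray2017mod} yield $|\kappa_j(W)| \lesssim j!(D+1)^{j-1} \kA_j / v^j$ for $j \geq 2$, with $\kappa_2(W) = 1$ and $\kappa_3(W) = \pm \rho / v^3$. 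On the disc $|t| L (D+1)/v \leq c$ for a small absolute constant $c$, this forces $|\phi_W(t)| \geq 1/2$ and yields the expansion $\log \phi_W(t) = -t^2/2 + \i \kappa_3(W) t^3/6 + R(t)$ with remainder $|R(t)| \lesssim (D+1)^3 \kA_4\, t^4 / v^4$.

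Exponentiating this expansion, the integrand decomposes into an Edgeworth piece $\e^{-t^2/2} \i \kappa_3(W) t^3/6$, a fourth-order remainder of size $\e^{-t^2/2} (D+1)^3 \kA_4\, t^4 / v^4$, and higher-order cross terms. Integrating $1/|t|$ against the Gaussian produces the explicit moments $\int \e^{-t^2/2} |t|^{k-1} \d t$, yielding the three contributions on the right-hand side of \eqref{eq:thm-new-exact}: the Edgeworth term with constant $0.607148$ for $\rho/v^3$, the fourth-moment term with constant $116.84$ for $(D+1)^3 \kA_4/v^4$, and a $1/T$ tail contribution. The maximum in the last term reflects two natural choices for the truncation radius: a \emph{mod-$\phi$} radius $T \asymp v/(L(D+1))$ producing the $L(D+1)/v$ branch, versus a radius tuned against the fourth cumulant, producing the square-root branch with the additional $1.596\, \rho/v^3$ correction coming from reinserting the Edgeworth piece at the boundary.

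The final sentence of the statement is a bookkeeping comparison: assume the right-hand side of \eqref{eq:mod-phi-corollary-new} is at most $1$. One then controls each term in \eqref{eq:thm-new-exact} by one of the two terms in \eqref{eq:mod-phi-corollary-new} using the dependency-graph cumulant bound $\rho \lesssim (D+1)^2 \kA_3$ and the comparison $\kA_4 \leq L \kA_3$; summing the per-term multiplicative constants yields $1.06164$, and the sharper cumulant bound available when $c_k = \E[Y_k]$ gives instead $0.85771$. The main technical difficulty is in the remainder $R(t)$: the cumulant series $\sum_{j \geq 4} \kappa_j(W) t^j/j!$ must be bounded by a \emph{geometric} (not merely convergent) tail, and the truncation radius $T$ must be jointly optimized against the polynomial-times-Gaussian integrals to pin down the specific constants $116.84$ and $22.47$. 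This is precisely the step where the mod-$\phi$ machinery of \cite{feray2017mod} has to be invoked with its sharpest available estimates rather than order-of-magnitude versions, and I expect the bulk of the labour in Section \ref{sec:refined} to lie there.
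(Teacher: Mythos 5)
Your proposal follows essentially the same route as the paper's proof: track the third cumulant through the Fourier-transform expansion (this is Remark \ref{rem:zone-of-control-refined}), control the remainder by the fourth cumulant, apply Esseen smoothing and optimize the truncation radius to obtain the maximum, and compare with Theorem \ref{thm:mod-phi-corollary-new} using $\rho \le 12(D+1)^2\kA_3$ (or $3(D+1)^2\kA_3$ when $c_k=\E[Y_k]$) and $\kA_4\le L\kA_3$. A minor technical simplification in the paper: since the $Y_k$ are bounded there is no need to verify $|\E[\e^{\i t W}]|\ge 1/2$ before taking logarithms, as the cumulant generating function is directly analytic and one simply bounds $|\e^z-1|\le|z|\e^{|z|}$ with $z$ the tail of the cumulant series.
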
 
 
We expect the bound of Theorem \ref{thm:mod-phi-corollary-new-exact} to always be better than that of Theorem \ref{thm:mod-phi-corollary-new}, but were only able to establish this up to a multiplicative constant of $1.06164$.

We finally treat the case where the random variables only need to have a finite third moment (meaning $\kA_3<\infty$), or even just a finite $2+\varepsilon$-th moment for some $\varepsilon>0$ (meaning $\kA_\delta<\infty$ for some $\delta>2$). 
\begin{theorem}\label{thm:dkol-delta-3+} 
	Assume $v>0$ and $\kA_\delta<\infty$ for some $\delta\in[3,\infty)$. Then 
	\begin{equation}\label{eq:dkol-3+-sigma-large} 
	\dkol\left(W,\kN(0,1)\right)  \leq 
	\max\left\{ 18.96 \, \xi_\delta^{-\frac{\delta}{\delta+1}}\left(\frac{D+1}N\right)^{\frac{\delta-2}{2(\delta+1)}}  \ , \  \frac{227.5}{\xi_\delta^3} \sqrt{\frac{D+1}{N}} \right\} \ . 
	\end{equation} 
\end{theorem}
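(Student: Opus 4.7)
The plan is to reduce to the bounded case via truncation and then transfer back using a Chebyshev plus anti-concentration argument (a mere union bound would be too crude). For a threshold $L > 0$ to be optimized later, introduce
\begin{equation*}
\tilde Y_k \define c_k + (Y_k - c_k) \mathds{1}_{|Y_k - c_k| \leq L}, \qquad \tilde S \define \sum_{k\in V}\tilde Y_k, \qquad \tilde v \define \sqrt{\V[\tilde S]}, \qquad \tilde W \define \frac{\tilde S - \E[\tilde S]}{\tilde v}.
\end{equation*}
The family $(\tilde Y_k)_{k\in V}$ inherits the dependency graph of $(Y_k)_{k\in V}$ and satisfies $\|\tilde Y_k - c_k\|_\infty \leq L$, so Theorem~\ref{thm:mod-phi-corollary-new} bounds $\dkol(\tilde W, \kN(0,1))$. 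Using $\tilde\kA_3 \leq \kA_3 \leq N^{1-3/\delta}\kA_\delta^{3/\delta}$ (by H\"older applied termwise and then to the sum) together with the identity $\kA_\delta = N^{1-\delta/2}(D+1)^{-\delta/2}v^\delta\xi_\delta^{-\delta}$, the first entry of the max in Theorem~\ref{thm:mod-phi-corollary-new} rewrites (to leading order in $\tilde v \approx v$) as $68.5\,\xi_\delta^{-3}\sqrt{(D+1)/N}$, independently of $L$, while the second entry remains $22.88\,L(D+1)/\tilde v$.

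For the transfer from $\tilde W$ to $W$, anti-concentration of $\tilde W$ combined with Theorem~\ref{thm:mod-phi-corollary-new} yields $\sup_t\P[\tilde W \in (t-\epsilon, t+\epsilon)] \leq 2\epsilon/\sqrt{2\pi} + 2\dkol(\tilde W, \kN(0,1))$ for any $\epsilon > 0$, hence
\begin{equation*}
\dkol(W, \kN(0,1)) \leq \tfrac{2\epsilon}{\sqrt{2\pi}} + 3\dkol(\tilde W, \kN(0,1)) + \P[|W - \tilde W| > \epsilon].
\end{equation*}
The dependency graph structure gives the variance bound $\V[S-\tilde S] \leq (D+1)\sum_k \E[(Y_k - c_k)^2\mathds{1}_{|Y_k-c_k|>L}] \leq (D+1) L^{2-\delta}\kA_\delta$, and together with the mean shift $|\E[S - \tilde S]| \leq L^{1-\delta}\kA_\delta$ and the elementary estimate $|\tilde v - v| \lesssim \sqrt{\V[S-\tilde S]}$, a Chebyshev estimate yields $\P[|W - \tilde W| > \epsilon] \lesssim (D+1)L^{2-\delta}\kA_\delta/(\epsilon v)^2$.

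Optimization proceeds in two stages. First, minimizing $\epsilon + (D+1)L^{2-\delta}\kA_\delta/(\epsilon v)^2$ over $\epsilon$ gives the truncation cost $\epsilon \sim ((D+1)L^{2-\delta}\kA_\delta/v^2)^{1/3}$. Second, balancing this cost against the residual $22.88\,L(D+1)/\tilde v$ yields $L^{\delta+1} \sim v\kA_\delta/(D+1)^2$; substituting back and simplifying via the $\xi_\delta$ identity produces $\epsilon = L(D+1)/v = \xi_\delta^{-\delta/(\delta+1)}((D+1)/N)^{(\delta-2)/(2(\delta+1))}$, which (up to the explicit constant $18.96$) is the first term of~\eqref{eq:dkol-3+-sigma-large}. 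The remaining term $(227.5/\xi_\delta^3)\sqrt{(D+1)/N}$ is the $L$-independent H\"older-based estimate from the first entry of the max in Theorem~\ref{thm:mod-phi-corollary-new}, and the claimed bound is the maximum of the two contributions.

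The main obstacle I anticipate is bookkeeping: carefully propagating the variance perturbation $\tilde v$ vs $v$, the mean shift $\E[S] - \E[\tilde S]$, and the compounding of constants through the anti-concentration inequality so as to preserve the explicit constants $18.96$ and $227.5$. Invoking the sharper Theorem~\ref{thm:mod-phi-corollary-new-exact} in place of Theorem~\ref{thm:mod-phi-corollary-new} may help to absorb the residual $\rho/v^3$ and $\kA_4$ correction terms without loss.
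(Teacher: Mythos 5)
Your proposal is mathematically sound in its structure and would yield the correct rate, but it takes a genuinely different route from the paper's proof --- in fact, the route you chose is the one the paper explicitly identifies and declines. In Section~\ref{sec:overview} the authors write that after truncating, ``a straightforward approach (used in~\cite{feray2017mod}) is to bound the Kolmogorov distance between $S$ and $S^{(L)}$, and to optimize over $L$,'' and that they instead ``bound the difference between the Fourier transform of $S$ and that of $S^{(L)}$ in a neighborhood of zero,'' then proceed as in the bounded case. Concretely: you truncate at a single level $L$, apply Theorem~\ref{thm:mod-phi-corollary-new} to $\tilde W$, and transfer back to $W$ via an anti-concentration smoothing bound plus Chebyshev on $W-\tilde W$, optimizing jointly over $(L,\epsilon)$. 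The paper never leaves Fourier space: Lemma~\ref{lem:zone-of-control-unbounded} compares $\E[\e^{\i s W}]$ to $\e^{-s^2/2}$ by truncating \emph{inside} the characteristic function estimate, with an $s$-dependent truncation level $L=v/(2\e|s|(D+1))$ chosen per Fourier mode; the resulting pointwise bound (Lemma~\ref{lem:fourier-delta-3+}) is then fed into the Feller inequality~\eqref{eq:dkol-feller} and optimized over the cutoff $t$. Both approaches use the same ingredients --- the cumulant bound of Lemma~\ref{lem:better-cumulants}, the tail estimates~\eqref{eq:bound-E(S) minus E(S^-)} and~\eqref{eq:bound difference variances}, and H\"older to compare $\kA_3$ with $\kA_\delta$ --- and your power-counting (including the identity $\xi_\delta^{-\delta}=\kA_\delta N^{\delta/2-1}(D+1)^{\delta/2}/v^\delta$ that turns $L^{\delta+1}\sim v\kA_\delta/(D+1)^2$ into the exponent $\frac{\delta-2}{2(\delta+1)}$) checks out. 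What the Fourier route buys is sharper constants: your transfer step introduces additive smoothing and Chebyshev penalties on top of the bounded-case bound, and with a fixed $L$ rather than an $s$-adapted one; a careful accounting is unlikely to land exactly on $18.96$ and $227.5$, which you correctly flag as the main obstacle. One small note: since $\E[W]=\E[\tilde W]=0$, the mean shift $\E[S-\tilde S]$ you worry about does not actually enter the Chebyshev bound --- the term $(\tilde S-\E[\tilde S])(\tfrac{1}{v}-\tfrac{1}{\tilde v})$ controlled by $|\tilde v - v|\le\sqrt{\V[S-\tilde S]}$ is the only other contribution, and it is of lower order.
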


\begin{theorem}\label{thm:result-refined-delta<2} 
	Assume $v>0$ and $\kA_\delta<\infty$ for some $\delta \in (2,3)$. 
	Then 
	\begin{equation}\label{eq:result-refined-delta<2} 
	\dkol\left(W,\kN(0,1)\right) \leq 8.015 \ \xi_\delta^{-\frac{\delta}{\delta+1}}\left(\frac{D+1}N\right)^{\frac{\delta-2}{2(\delta+1)}}  . 
	\end{equation} 
\end{theorem}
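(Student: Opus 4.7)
Since a finite $L^\delta$-moment with $\delta\in(2,3)$ does not control $\kA_3$, I would reduce to the bounded case (Theorem~\ref{thm:mod-phi-corollary-new}) by truncating each $Y_k$ at a level $T>0$ and optimizing over~$T$. Concretely, decompose
\begin{equation*}
Y_k-c_k \;=\; \tilde Y_k + Z_k, \qquad \tilde Y_k=(Y_k-c_k)\,\mathds{1}_{\abs{Y_k-c_k}\le T}, \qquad Z_k=(Y_k-c_k)\,\mathds{1}_{\abs{Y_k-c_k}>T},
\end{equation*}
and set $\tilde S=\sum_k \tilde Y_k$. Since $\tilde Y_k$ and $Z_k$ are measurable functions of $Y_k$ alone, both families $(\tilde Y_k)_{k\in V}$ and $(Z_k)_{k\in V}$ inherit the dependency graph of $(Y_k)_{k\in V}$.

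By construction $\|\tilde Y_k\|_\infty\le T$, so Theorem~\ref{thm:mod-phi-corollary-new} (or, for sharper constants, Theorem~\ref{thm:mod-phi-corollary-new-exact}) applied to $(\tilde Y_k)_{k\in V}$ with centering $0$ bounds the Kolmogorov distance between $(\tilde S-\E[\tilde S])/\sqrt{\V[\tilde S]}$ and $\kN(0,1)$ in terms of $L=T$ and the third-moment quantity $\tilde\kA_3=\sum_k \E[\abs{\tilde Y_k}^3]\le T^{3-\delta}\kA_\delta$, where the last inequality follows from $\abs{x}^3\le T^{3-\delta}\abs{x}^\delta$ on $\abs{x}\le T$. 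I would then relate $W$ to the normalized truncated sum via the elementary smoothing estimate, valid for every $\varepsilon>0$:
\begin{equation*}
\dkol(W,\kN(0,1))\;\le\; \dkol\!\left(\frac{\tilde S-\E[\tilde S]}{v},\,\kN(0,1)\right) \;+\; \P\!\left(\abs{S-\tilde S-\E[S-\tilde S]}>\varepsilon v\right) \;+\; \frac{\varepsilon}{\sqrt{2\pi}},
\end{equation*}
and control the probability by Chebyshev together with the dependency-graph variance bound $\V[\sum_k Z_k]\le (D+1)\sum_k \V[Z_k]\le (D+1)T^{2-\delta}\kA_\delta$. A further small correction accounts for the rescaling from $\sqrt{\V[\tilde S]}$ to $v$ in the first term, handled by $\abs{v^2-\V[\tilde S]}\le 2v\sqrt{\V[S-\tilde S]}+\V[S-\tilde S]$ and the Lipschitz continuity of the standard normal CDF.

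Finally, I would optimize. The choice $\varepsilon\asymp(\V[S-\tilde S]/v^2)^{1/3}$ reduces the truncation error to order $((D+1)T^{2-\delta}\kA_\delta/v^2)^{1/3}$, which I would balance against the $L(D+1)/v=T(D+1)/v$ term coming from Theorem~\ref{thm:mod-phi-corollary-new}. Setting these equal yields $T\asymp(\kA_\delta v/(D+1)^2)^{1/(\delta+1)}$, and back-substitution and rewriting in terms of $\xi_\delta$ reproduces the announced rate $\xi_\delta^{-\delta/(\delta+1)}\bigl((D+1)/N\bigr)^{(\delta-2)/(2(\delta+1))}$. The main obstacles will be: (i) verifying that at this optimal choice of $T$ the $\kA_3$-term of Theorem~\ref{thm:mod-phi-corollary-new} is indeed dominated by the $T(D+1)/v$-term, so that only the latter contributes to the final rate and the maximum from Theorem~\ref{thm:dkol-delta-3+} collapses to a single term; and (ii) propagating constants sharply enough through the reductions to achieve the explicit prefactor~$8.015$, which will most likely require invoking the refined Theorem~\ref{thm:mod-phi-corollary-new-exact} and a sharpened smoothing inequality in place of the elementary one used above.
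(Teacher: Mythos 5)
Your plan is a genuinely different route from the paper's, and the authors explicitly point out that they rejected it: in Section~\ref{sec:overview} they write that \enquote{a straightforward approach (used in~\cite{feray2017mod}) is to bound the Kolmogorov distance between $S$ and $S^{(L)}$, and to optimize over $L$}, before announcing that they instead compare the Fourier transforms of $S$ and of $S^{(L)}$. Their proof works through Lemma~\ref{lem:zone-of-control-unbounded}, which controls $\bigl|\E[\e^{\i s W}]-\e^{-s^2/2}\bigr|$ for an arbitrary truncation level $L$, then Lemma~\ref{lem:fourier-delta-23} substitutes a \emph{frequency-dependent} truncation $L = v/(2\e|s|(D+1))$ inside the Fourier integral before applying the Feller-type inversion inequality~\eqref{eq:dkol-feller} and optimizing over the integration cutoff. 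You instead fix a single truncation level $T$, apply Theorem~\ref{thm:mod-phi-corollary-new} to the truncated sum, and pay for the tail with a Chebyshev-plus-smoothing argument.

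Despite being the route the authors avoided, your plan does reach the stated rate, and the reason is worth stating explicitly: Féray–Méliot–Nikeghbali obtained a weaker exponent not because the truncate-then-smooth scheme is intrinsically lossy, but because their bounded-case estimate was of the single-term form $L^3 N (D+1)^2/v^3$ (cf.~\eqref{eq:FMN-estimate}), whereas your plan feeds the truncation into the improved Theorem~\ref{thm:mod-phi-corollary-new}, whose $\max\{\kA_3\text{-term},\,L(D+1)/v\text{-term}\}$ structure is exactly what lets the $L(D+1)/v$ term dominate at the optimum. Carrying out your balancing gives $T\asymp(\kA_\delta v/(D+1)^2)^{1/(\delta+1)}$, and at that $T$ the ratio of the $\kA_3$-term to the $L(D+1)/v$-term is, up to constants, the \emph{cube} of the final bound, so whenever the conclusion is nontrivial ($\le 1$) the dominance you need in your obstacle~(i) holds automatically — the same kind of \enquote{either the bound is $\ge 1$ and trivial, or the constraint is satisfied} trick that the paper itself uses at the end of its proof. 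The $\tilde v$-to-$v$ rescaling error is cubic in the small quantity and hence negligible, and the positivity $\tilde v>0$ again follows from $3(D+1)T^{2-\delta}\kA_\delta < v^2$, which is implied by the bound being small; so obstacle~(ii) is not a real barrier either. A rough accounting — $22.88\,T(D+1)/v$ from Theorem~\ref{thm:mod-phi-corollary-new}, about $1.02\,\bigl((D+1)T^{2-\delta}\kA_\delta/v^2\bigr)^{1/3}$ from Chebyshev-plus-smoothing, and a supremum over $\delta\in(2,3)$ of $22.88^{(\delta-2)/(\delta+1)}\le 22.88^{1/4}$ — suggests a constant in the ballpark of $4$–$5$ before rescaling corrections, so your approach would plausibly match or beat the paper's $8.015$. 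What the paper's frequency-dependent truncation buys is a unified treatment in which the same Lemma~\ref{lem:zone-of-control-unbounded} feeds all of Theorems~\ref{thm:mod-phi-corollary-new}, \ref{thm:dkol-delta-3+} and \ref{thm:result-refined-delta<2}, avoiding the separate $\varepsilon$-smoothing step and the variance-rescaling bookkeeping that your route requires.

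Two small cautions if you execute this. First, state the smoothing inequality with the correct constant $1/\sqrt{2\pi}$ (the sup of the standard normal density) and carry the $\dkol(\kN(0,\tilde v^2/v^2),\kN(0,1))$ correction explicitly rather than silently identifying $\tilde v$ with $v$ when invoking Theorem~\ref{thm:mod-phi-corollary-new}. Second, the paper's bound $\V[\sum_k Z_k]\le (D+1)\sum_k\V[Z_k]$ that you invoke uses Cauchy–Schwarz plus AM–GM on adjacent pairs, matching~\eqref{eq:bound difference variances}; include that step rather than citing it as \enquote{the dependency-graph variance bound}.
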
 
 
\begin{remark} 
We conjecture that the bounds of Theorems \ref{thm:dkol-delta-3+} and \ref{thm:result-refined-delta<2} are not optimal, see Conjecture \ref{conj:main}. Consequently, we have done little to optimize the constants in the theorems above. We are confident that many of the techniques used to improve the bounds of the i.i.d. Berry--Esseen theorem (see e.g. \cite{Shevtsova2013}) can be adapted to our setting. 
\end{remark} 
 
\medskip 
 
\begin{remark} 
The asymptotic behaviour of the bounds of Theorems \ref{thm:dkol-delta-3+} and \ref{thm:result-refined-delta<2} as $N\to\infty$ is not always obvious, since $\xi_\delta$ implicitely depends on $N$. For the same reason, it is not always clear which $\delta$ gives the best bound. 
 
As noted after Definition \ref{def:renormedSD}, $\xi_\delta \in [0,1]$ and $\xi_\delta$ decreases with $\delta$. This means that the terms $\xi_\delta^{-\delta/(\delta+1)}$ and $\xi_\delta^{-3}$ become larger as $\delta\to\infty$. Compare this to Theorems \ref{thm:Stein-result} and \ref{thm:Stein-dkol}, where only $\xi_3$ and $\xi_4$ are needed. 
On the other hand, a larger $\delta$ gives a better exponent on $((D+1)/N)^{\frac{\delta-2}{2(\delta+1)}}$. 
 
If the $(Y_k)_{k\in V}$ are identically distributed and $\frac{\V[S]}{N(D+1)}$ is fixed, then $\xi_\delta$ does not depend on $N$, and only the term $(D+1)/N$ matters. In this case, fixing $\delta$ as large as possible gives the best asymptotic bounds as $N/(D+1)\to\infty$ --- it is useful to note that the maximum in \eqref{eq:dkol-3+-sigma-large} equals the first term when $N/(D+1)$ is large enough. 
 
The situation is different when the $(Y_k)_{k\in V}$ are not identically distributed. Let us highlight that the definition of $\xi_\delta$ involves 
$$\frac{\mathcal{A}_\delta}N = \frac{1}{N}\sum_{k\in V} \E[|Y_k - c_k|^\delta] ,$$ 
the average $L^\delta$ norm of the $(Y_k-c_k)_{k\in V}$. This is significantly better than a more naive bound using $\sup_{k\in V} \E[|Y_k-c_k|^\delta]$ instead of $\mathcal{A}_\delta/N$, particularly in the case where the $(Y_k)_{k\in V}$ are not identically distributed. 
 
Let us give an example where taking $\delta$ as large as possible is not the best decision. Consider independent $(Y_k)_{k\in V}$ with $Y_k \sim \mathrm{Ber}(1/k)$ and $c_k = \E[Y_k] = 1/k$ for every $k\geq 1$. Then $\sup_{k\in V} \E[|Y_k-c_k|^\delta] \geq \E[|Y_2-1/2|^\delta] = 2^{-\delta}$, but 
$$ \frac{\mathcal{A}_\delta}{N} = \frac{\ln N}{N} (1+o_N(1)) = o_N(1) \quad , \quad \V[S] = \ln N (1+o_N(1)) \quad , \quad \xi_\delta = \left(\frac{\ln N}{N}\right)^{\frac{1}{2}-\frac{1}{\delta}} (1+o_N(1)). $$ 
In this case, Theorem \ref{thm:dkol-delta-3+} gives 
\begin{equation*} 
\dkol(W, \mathcal{N}(0,1)) \lesssim \max\left\{ (\ln N)^{-\frac{\delta-2}{2(\delta+1)}} \ , \ \frac{N^{1-\frac{3}{\delta}}}{(\ln N)^{\frac{3(\delta-2)}{2\delta}}} \right\} . 
\end{equation*} 
The bound is non-trivial only when $\delta=3$, in which case $\dkol(W,\mathcal{N}(0,1)) \lesssim (\ln N)^{-1/8}$. On the other hand, the bound of Theorem \ref{thm:result-refined-delta<2} becomes 
\begin{equation*} 
\dkol(W,\mathcal{N}(0,1)) \lesssim (\ln N)^{-\frac{\delta-2}{2(\delta+1)}} . 
\end{equation*} 
The better bound is attained for $\delta=3$, and is then identical to that of Theorem \ref{thm:dkol-delta-3+}.

\end{remark} 
 
\begin{remark} 
The best bound in the example in the previous remark is given by Theorem \ref{thm:mod-phi-corollary-new}: 
$$ \dkol(W,\mathcal{N}(0,1)) \lesssim (\ln N)^{-1/2} . $$ 
In fact, for most applications, Theorem \ref{thm:mod-phi-corollary-new} gives better bounds than Theorems \ref{thm:dkol-delta-3+} and \ref{thm:result-refined-delta<2} if it can be applied. 
\end{remark}

\section{Comparison to the state-of-the-art} 
\label{sec:previous-results} 
\label{sec:direct-comparison}

Our approach, which is close to that of the first proof of Berry--Esseen, and of the state-of-the art in the i.i.d. case, centers around bounding the Fourier transform (or characteristic function) $\E[\e^{\i \xi W}]$ of $W$. Recall that $\E[\e^{\i \xi X_n}]$ converges pointwise (i.e. for every $\xi\in\R$) towards $\e^{-\xi^2/2}$ as $n\to\infty$ if and only if $X_n$ converges in distribution towards $\kN(0,1)$. Hence, it does not come as a surprise that any uniform bound on the difference between the Fourier transform of $W$ and that of $\kN(0,1)$ on a neighborhood of $0$ translates to a bound on $\dkol(W,\kN(0,1))$, see Section \ref{sec:overview}. We can in fact derive much more than a Berry--Esseen-type bound from a good enough control on the Fourier transform of $W$, which is the main appeal of this method. Its drawback is that it is hard to control the Fourier transform of $W$ when the $(Y_k)_{k\in V}$ are not independent.

An alternative approach was pioneered by Stein \cite{Stein72} and met resounding success thanks to its polyvalence. Stein’s method works with distances of the form 
$$ \mathrm{d}_\kF(W , G) = \sup_{f\in \kF} | \E[f(W)] - \E[f(G)] | $$ 
for $\kF$ some class of functions that is large enough for the above expression to indeed define a distance: for example, the set of all $1$-Lipschitz functions defines the Wasserstein-1 distance $W_1$, and the set of all $\ind{(-\infty,t]}$ for $t\in\R$ defines the Kolmogorov distance. The right-hand side is then re-written by introducing well-chosen couplings, then bounded, typically by Taylor expansion and using a Lipschitz property for $f$ and its derivative. As an example, \cite[Theorem 3.5]{ross2011fundamentals} or \cite{Rinott89} yields, for $(Y_k)_{k\in V}$ having a dependency graph of maximum degree $D$, 
\begin{equation}\label{eq:Stein-W1} 
W_1(W , \kN(0,1)) \leq \frac{D^2}{\V[S]^{3/2}} \sum_{k\in V} \E[|Y_k|^3] + \sqrt{\frac{26}{\pi}} \frac{D^{3/2}}{\V[S]} \sqrt{\sum_{k\in V} \E[Y_k^4]} . 
\end{equation} 
Bounds in the Kolmogorov distance are trickier to obtain, crucially because the functions $\ind{(-\infty, t]}$ have a discontinuity. An easy, but suboptimal bound is derived from \eqref{eq:Stein-W1} together with $\dkol(W, \kN(0,1)) \leq 2 \sqrt{C W_1(W, \kN(0,1))}$ where $C = 1/\sqrt{2\pi}$ is the maximum of the density of $\kN(0,1)$. Such a bound is found in \cite{Penrose2003}: 
\begin{theorem}[{\cite[Theorem 2.4]{Penrose2003}}]\label{thm:Stein-result} 
	Let $(Y_k)_{k\in V}$ be a finite family of real $L^4$-random variables admitting a dependency graph with maximal degree $D\in\N$. Then, if $\V[S]>0$, 
	\begin{equation*} 
		\dkol\left(W, \kN(0,1)\right)\leq 6 \frac{(D+1)}{\V[S]^\frac 34}\sqrt{\sum_{k\in V}\E[\abs{Y_k-\E[Y_k]}^3]}+ 6 \frac{(D+1)^\frac 32}{\V[S]}\sqrt{\sum_{k\in V}\E[\abs{Y_k-\E[Y_k]}^4]}. 
	\end{equation*} 
\end{theorem}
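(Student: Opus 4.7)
The plan is to approach this via Stein's method adapted to the Kolmogorov distance under local dependence, closely following the scheme of \cite{Rinott89} and \cite{ChenShao}. Since $W$ is invariant under replacing each $Y_k$ by $Y_k-\E[Y_k]$, I will assume throughout that $\E[Y_k]=0$, so that $W=S/v$ with $v=\sqrt{\V[S]}$. For each $t\in\R$ introduce the bounded solution $f_t$ of Stein's equation
\begin{equation*}
f_t'(w) - w f_t(w) = \ind{w\leq t} - \Phi(t),
\end{equation*}
where $\Phi$ is the $\kN(0,1)$ cumulative distribution function. Classical estimates yield uniform bounds on $\|f_t\|_\infty$ and on the Lipschitz constant of $w\mapsto w f_t(w)$; the derivative $f_t'(w) = w f_t(w) + \ind{w\leq t} - \Phi(t)$ is bounded but carries a jump of $-1$ at $w=t$. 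Evaluating the Stein identity at $W$ gives $\E[f_t'(W)-Wf_t(W)] = \P(W\leq t) - \Phi(t)$, so the problem reduces to an upper bound, uniform in $t$, on $|\E[f_t'(W)-Wf_t(W)]|$.

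The second step exploits the dependency graph. For each $k\in V$ let $N_k$ denote the closed neighborhood of $k$ in $G$ (so $|N_k|\leq D+1$), and set
\begin{equation*}
V_k = \frac{1}{v}\sum_{j\in N_k} Y_j, \qquad W_k = W - V_k.
\end{equation*}
The dependency-graph property makes $Y_k$ independent of $(Y_j)_{j\notin N_k}$, hence of $W_k$. Taylor expanding $f_t(W)=f_t(W_k)+V_k f_t'(W_k)+R_k$ and using $\E[Y_k f_t(W_k)]=\E[Y_k]\,\E[f_t(W_k)]=0$, one obtains
\begin{equation*}
\E[W f_t(W)] = \frac{1}{v}\sum_{k\in V}\E[Y_k V_k f_t'(W_k)] + \frac{1}{v}\sum_{k\in V}\E[Y_k R_k].
\end{equation*}
The variance identity $\frac{1}{v^2}\sum_{k\in V}\E\bigl[Y_k\sum_{j\in N_k}Y_j\bigr]=\V[W]=1$, itself an immediate consequence of the dependency graph killing all covariances $\E[Y_kY_j]$ for $j\notin N_k$, shows that the leading piece $\frac{1}{v}\sum_k\E[Y_kV_kf_t'(W_k)]$ formally equals $\E[f_t'(W)]$ up to the discrepancy between $f_t'(W_k)$ and $f_t'(W)$.

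The main obstacle is controlling this discrepancy, and it is here that the two terms in the bound arise. Decomposing $f_t'(w) = wf_t(w) + \ind{w\leq t} - \Phi(t)$ splits the error into a smooth part governed by the Lipschitz bound on $w\mapsto wf_t(w)$, and a jump part governed by the event $\{\min(W_k,W)\leq t<\max(W_k,W)\}$. The smooth part, after H\"older's inequality applied to the moments of $Y_k$ and $V_k$, yields a contribution of order $v^{-3/2}(D+1)\sqrt{\sum_k\E[|Y_k|^3]}$, which is the first term of the theorem. The jump part is handled by a Chen--Shao-type concentration estimate on $W_k$ at scale $|V_k|$; after Cauchy--Schwarz and summation over $k\in V$, this produces the second contribution of order $v^{-2}(D+1)^{3/2}\sqrt{\sum_k\E[Y_k^4]}$. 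Tracking the constants carefully then yields the prefactor $6$ in both terms.

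A weaker but more direct route, alluded to just before the theorem, combines the Wasserstein-1 bound \eqref{eq:Stein-W1} with the smoothing inequality $\dkol(X,G)\leq 2\sqrt{W_1(X,G)/\sqrt{2\pi}}$ (valid since $\kN(0,1)$ has density bounded by $(2\pi)^{-1/2}$); this already recovers a Kolmogorov bound of qualitatively the same form as the theorem, while the original argument of \cite{Penrose2003} performs the direct Stein-method computation sketched above in order to obtain the sharper exponents and the explicit constants.
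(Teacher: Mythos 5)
This statement is not proven in the paper at all: it is imported verbatim from \cite[Theorem 2.4]{Penrose2003}, and the only hint the authors give about its provenance is the sentence immediately before it, namely that bounds of this shape are obtained from a Stein estimate in a smooth metric combined with a smoothing inequality of the type $\dkol(W,\kN(0,1))\leq 2\sqrt{C\,W_1(W,\kN(0,1))}$, i.e.\ by paying a square root. So there is nothing in the paper to compare your argument with step by step; the question is only whether your sketch would actually deliver the stated inequality, and there I see a genuine gap.

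The shape of the two terms --- $\V[S]^{3/4}$ and $\V[S]$ in the denominators, and square roots of the moment sums --- is exactly the signature of that square-root step applied to a dependency-graph bound for twice-differentiable test functions of the form $(D+1)^2\sum_{k\in V}\E[\abs{Y_k-\E[Y_k]}^3]/\V[S]^{3/2}+(D+1)^3\sum_{k\in V}\E[\abs{Y_k-\E[Y_k]}^4]/\V[S]^{2}$ (Baldi--Rinott/Rinott type): square-rooting termwise reproduces the two displayed terms. Your primary route, a direct Kolmogorov-distance Stein computation with the non-smooth solution $f_t$ plus a Chen--Shao concentration inequality, does not produce this form. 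H\"older applied to the terms $\E[\abs{Y_k}V_k^2]$ coming from the Taylor remainder gives contributions proportional to $(D+1)^2\sum_{k\in V}\E[\abs{Y_k}^3]/\V[S]^{3/2}$, i.e.\ linear in the third-moment sum (this is how bounds like Theorem \ref{thm:Stein-dkol} look), not $(D+1)\sqrt{\sum_{k\in V}\E[\abs{Y_k}^3]}/\V[S]^{3/4}$; no amount of ``careful constant tracking'' converts the one into the other, and the assertion that the jump/concentration part yields $(D+1)^{3/2}\sqrt{\sum_{k\in V}\E[Y_k^4]}/\V[S]$ with prefactor $6$ is stated rather than derived. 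Your fallback route also misses the target: applying $\dkol\leq 2\sqrt{C\,W_1}$ to \eqref{eq:Stein-W1} square-roots \emph{both} terms and gives a second term of order $(D+1)^{3/4}\V[S]^{-1/2}\bigl(\sum_{k\in V}\E[Y_k^4]\bigr)^{1/4}$, not $(D+1)^{3/2}\V[S]^{-1}\sqrt{\sum_{k\in V}\E[Y_k^4]}$. To actually prove the theorem you would need either to reproduce Penrose's argument (smooth the indicator at a scale $\varepsilon$, apply the second-order smooth-test-function dependency-graph bound, optimize in $\varepsilon$ --- this is where the square roots, the exponents and the explicit constant $6$ come from), or to start from a smooth-metric bound whose second term is $(D+1)^3\sum_{k\in V}\E[Y_k^4]/\V[S]^2$ and take square roots; as written, neither of your two routes establishes the inequality with the stated exponents, let alone the stated constants.
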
 
 
Surprisingly, it is better in some cases than \cite[Theorem 2.7]{ChenShao}, which is proven using the slightly more general framework of local dependence: 
\begin{theorem}\label{thm:Stein-dkol} 
Fix $\delta \in (2,3]$. 
Let $V$ be a set of cardinality $N$ and $(Y_k)_{k\in V}$ be a family of random variables that admit a dependency graph of maximum degree $D$, such that $\E[Y_k]=0$ and $\|Y_k\|_\delta \leq \theta < \infty$ for every $k\in V$. Then 
$$ \dkol(W,\kN(0,1)) \leq 75 N (D+1)^{5(\delta-1)} \theta^\delta = 75 \sigma_\delta^{-\delta} \left(\frac {D+1}N\right)^{\frac{\delta-2}2} (D+1)^{4\delta-4} , $$ 
where $\sigma_\delta$ is as in Remark \ref{rem:sigma} with $c_k = 0$ for every $k\in V$. 
\end{theorem}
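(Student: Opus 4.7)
The plan is to use Stein's method adapted to the local dependence of the $(Y_k)_{k\in V}$. For any smooth test function $f$, the Stein equation $g'(w)-wg(w)=f(w)-\E f(Z)$ with $Z\sim\kN(0,1)$ has a solution $g$ with controlled derivatives, and yields $\E f(W)-\E f(Z)=\E[g'(W)-Wg(W)]$. To reach the Kolmogorov distance I would smooth $\ind{(-\infty,t]}$ by a Lipschitz function $f_{t,\alpha}$ of slope $1/\alpha$; the price is a concentration term $\P(|W-t|\le\alpha)$ which must be controlled separately.

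Next, exploit the dependency graph by introducing the closed neighborhoods $V_k=\{k\}\cup\{j:\{j,k\}\in E\}$, of size at most $D+1$, together with the local sums $T_k=\sum_{j\in V_k}Y_j/v$. Since $Y_k$ is independent of $W-T_k$ and $\E[Y_k]=0$, one has $\E[Y_k g(W-T_k)]=0$, giving $\E[Wg(W)]=v^{-1}\sum_k \E[Y_k(g(W)-g(W-T_k))]$. Using also $\sum_k \E[Y_kT_k]/v=\V[W]=1$, I obtain the decomposition
\[
\E f(W)-\E f(Z) \;=\; \E\!\left[g'(W)\Bigl(1-v^{-1}\!\sum_k Y_kT_k\Bigr)\right] + v^{-1}\!\sum_k \E\!\left[Y_k\bigl(g(W)-g(W-T_k)-T_k g'(W-T_k)\bigr)\right].
\]
The first term is controlled by $\|g'\|_\infty$ times the standard deviation of $v^{-1}\sum_k Y_kT_k$; the variance opens into a double sum over pairs of intersecting neighborhoods and is bounded by Hölder at exponent $\delta$ at the cost of a power of $D+1$. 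For $\delta\in(2,3]$ the usual quadratic Taylor bound fails on the second term, so I would use the interpolated estimate $|g(w+t)-g(w)-tg'(w)|\lesssim \|g'\|_\infty^{3-\delta}\|g''\|_\infty^{\delta-1}|t|^\delta$, obtained by minimizing between a first-order and a second-order bound; combined with $\|T_k\|_\delta^\delta\le(D+1)^{\delta-1}\sum_{j\in V_k}\|Y_j\|_\delta^\delta$ (power-mean inequality), this produces a remainder proportional to $N(D+1)^{\delta-1}\theta^\delta$ up to factors involving the derivatives of $g$.

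Finally, to pass from smooth tests to the Kolmogorov distance I would use the smoothing inequality $\dkol(W,Z)\le \inf_\alpha\bigl(\alpha/\sqrt{2\pi}+\sup_t|\E f_{t,\alpha}(W)-\E f_{t,\alpha}(Z)|\bigr)$, together with a local-dependence concentration inequality to estimate $\P(|W-t|\le\alpha)$ without circularity. The main obstacle is the precise bookkeeping of $(D+1)$ factors: each step---the variance of $v^{-1}\sum_k Y_kT_k$, the power-mean bound on $\|T_k\|_\delta^\delta$, the derivative bounds on $g_{t,\alpha}$ which scale polynomially in $1/\alpha$, the concentration inequality used in the smoothing step, and the final optimization in $\alpha$---contributes its own power of $D+1$, and assembling them so that the exponent is precisely $5(\delta-1)$ (rather than something slightly larger) requires tracking the dependence on $D$ carefully through every estimate rather than using cruder envelope bounds. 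This accounting is, I expect, the genuinely delicate part of the argument.
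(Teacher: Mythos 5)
This theorem is not proved in the paper at all: it is stated verbatim as \cite[Theorem 2.7]{ChenShao}, as the benchmark to compare against, so there is no ``paper's own proof'' here. Your plan is therefore being checked against the cited source and against internal coherence, not against something in this manuscript. At the level of strategy your outline is correct: Stein's method, the closed-neighborhood local sums $T_k$, the identity $\E[Wg(W)]=v^{-1}\sum_k\E[Y_k(g(W)-g(W-T_k))]$, the variance-of-$\sum Y_kT_k$ term, the interpolated $|t|^\delta$ Taylor remainder for $\delta\in(2,3]$, and the power-mean bound on $\|T_k\|_\delta$ are all the right ingredients and are indeed how Chen--Shao-type bounds are organized.

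Two concrete gaps. First, your route to the Kolmogorov distance (smooth $\ind{(-\infty,t]}$, pay a concentration term $\P(|W-t|\le\alpha)$, optimize in $\alpha$) is \emph{not} the route Chen and Shao take: they keep the discontinuous test function and prove a separate, self-contained randomized concentration inequality inside the Stein computation, which is precisely what breaks the circularity you flag. A naive smoothing optimization with $\|g''_{t,\alpha}\|_\infty\sim\alpha^{-1}$ gives a remainder of order $(N(D+1)^{\delta-1}\theta^\delta/\alpha^{\delta-1})+\alpha$, whose optimum is $(N(D+1)^{\delta-1}\theta^\delta)^{1/\delta}$ --- strictly worse than the claimed $N(D+1)^{5(\delta-1)}\theta^\delta$ in the $N$-exponent --- so without the bootstrap/concentration lemma you mention but do not supply, the method does not reach the stated rate. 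Second, the exponent $5(\delta-1)$ on $D+1$ and the constant $75$ are exactly where the content of the theorem resides, and you explicitly defer that accounting as ``the genuinely delicate part.'' As it stands this is a plausible outline of the known proof, not a proof. A smaller point worth fixing: the stated bound $75N(D+1)^{5(\delta-1)}\theta^\delta$ is not scale-invariant, so the theorem implicitly takes $\V[S]=1$ (visible from the second, $\sigma_\delta$-form of the bound); your decomposition carries $v$ explicitly, which is fine, but you should make the normalization explicit when matching exponents.
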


The dependence in $N$ is optimal (i.e. just as good as in the classical Berry--Esseen inequality, Theorem \ref{thm:Berry-Essen-general}). However, the exponent of $D$ is significantly worse, and there are regimes (namely $D$ large) where our Theorems \ref{thm:dkol-delta-3+} and \ref{thm:result-refined-delta<2} do significantly better than Theorem \ref{thm:Stein-dkol}. In addition, $\sigma_\delta$ is used instead of $\xi_\delta$, which is worse since we always have $\sigma_\delta\le\xi_\delta$. Nevertheless, for $D$ small, Theorem \ref{thm:Stein-dkol} constitutes, as far as we know, the state-of-the-art for Berry--Esseen-type bounds in the context of dependency graphs.

The former state-of-the-art using the Fourier transform method, using a similar starting point as our results, was as follows. We strictly improve this result. 
\begin{theorem}[{\cite[Theorem 39]{feray2017mod}}]\label{thm:modII-result} 
	Let $(Y_{n,k})_{1\leq k \leq n}$ be a triangular array of real random variables such that $A\define\sup_{1\leq k \leq n}\norm{Y_{n,k}}_{\delta}<\infty$ for some $\delta\in(6,\infty)$, where, for every $\delta \in [1,\infty]$, $\norm\cdot_{\delta}$ denotes the $L^\delta$ norm on random variables. Assume furthermore that $(Y_{n,k})_{1\le k\le n}$ admits a dependency graph with maximal degree $D=D(n)\in\N$ for every $n\in\N$, and assume that for $S_n\define\sum_{k=1}^n Y_{n,k}$, we have $V[S_n]>0$ as well as 
	\begin{equation}\label{eq:condition-FMN2} 
		\frac{n^{\frac{3+\delta}{3\delta}}(D+1)^\frac 23}{\sqrt{\V[S_n]}}\to 0 
	\end{equation} 
	for $n\to\infty$. Then for $n$ large enough, 
	\begin{equation} 
		\dkol\left(\frac{S_n-\E[S_n]}{\sqrt{\V[S_n]}}, \kN(0,1)\right)\lesssim \left(\frac{A n^{\frac{3+\delta}{3\delta}}(D+1)^\frac 23}{\sqrt{\V[S_n]}}\right)^{\frac{3\delta}{\delta+3}}. 
		\label{eq:FMN-bound} 
	\end{equation} 
\end{theorem}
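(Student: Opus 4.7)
The plan is to combine Esseen's smoothing inequality with a truncation argument and the classical cumulant bounds for sums over a dependency graph, following the mod-$\phi$ philosophy of \cite{feray2017mod}. Fix a threshold $M>0$, to be optimized at the end, and set $\tilde Y_{n,k} = Y_{n,k} \ind{|Y_{n,k}|\leq M}$, $\tilde S_n = \sum_k \tilde Y_{n,k}$, $\tilde W_n = (\tilde S_n - \E[\tilde S_n])/\sqrt{\V[\tilde S_n]}$. The truncated family inherits the dependency graph structure, and Markov's inequality at order $\delta$ gives $\P(S_n \neq \tilde S_n) \leq nA^\delta/M^\delta$. A standard comparison of means and variances, using $|\E[Y_{n,k}-\tilde Y_{n,k}]| \leq \|Y_{n,k}\|_\delta^\delta/M^{\delta-1}$ and analogously for second moments, shows that $\dkol(W_n, \tilde W_n)$ is at most of the same order as long as $M$ is not taken too small; this is where condition \eqref{eq:condition-FMN2} and the lower bound $\delta>6$ enter.

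For the truncated sum, Esseen's smoothing inequality gives, for any $T>0$,
\begin{equation*}
\dkol(\tilde W_n, \kN(0,1)) \lesssim \int_{-T}^T \frac{|\E[e^{\i\xi\tilde W_n}]-e^{-\xi^2/2}|}{|\xi|}\,d\xi + \frac{1}{T}.
\end{equation*}
To bound the characteristic function, expand $\log\E[e^{\i\xi \tilde W_n}] = -\xi^2/2 + \sum_{r\geq 3} \kappa_r(\tilde W_n)(\i\xi)^r/r!$ and use the Erd\H{o}s--Kallenberg-type cumulant bound
\begin{equation*}
|\kappa_r(\tilde S_n)| \leq r!\,C^r\, n\,(D+1)^{r-1}\,M^r,
\end{equation*}
for an absolute constant $C$, which follows by expressing the $r$-th cumulant as a sum over connected spanning subgraphs of the dependency graph and counting spanning trees. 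Writing $\alpha = CM(D+1)/\sqrt{\V[\tilde S_n]}$, this yields $|\kappa_r(\tilde W_n)| \leq r!(n/(D+1))\alpha^r$, and summing the geometric series for $|\xi|\leq 1/(2\alpha)$ produces
\begin{equation*}
|\E[e^{\i\xi\tilde W_n}]-e^{-\xi^2/2}| \lesssim \frac{n}{D+1}\alpha^3|\xi|^3 e^{-\xi^2/2}.
\end{equation*}

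Plugging this into Esseen's inequality with $T=1/(2\alpha)$ gives the ``$L^\infty$-type'' bound $\dkol(\tilde W_n, \kN(0,1)) \lesssim n(D+1)^2M^3/\V[S_n]^{3/2} + M(D+1)/\sqrt{\V[S_n]}$. Adding the truncation error, we obtain
\begin{equation*}
\dkol(W_n,\kN(0,1)) \lesssim \frac{n(D+1)^2 M^3}{\V[S_n]^{3/2}} + \frac{nA^\delta}{M^\delta} + \frac{M(D+1)}{\sqrt{\V[S_n]}}.
\end{equation*}
Balancing the first two terms yields $M\sim (A^\delta\V[S_n]^{3/2}/(D+1)^2)^{1/(\delta+3)}$ and, after substitution, the announced bound with exponent $3\delta/(\delta+3)$ on the quantity appearing in \eqref{eq:condition-FMN2}. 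The hypothesis \eqref{eq:condition-FMN2} guarantees that this $M$ falls in the admissible range where the third term and the shifts caused by truncation remain negligible.

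The main obstacle is the cumulant bound in the second step: producing $|\kappa_r(\tilde S_n)| \leq r!\,C^r\,n(D+1)^{r-1}M^r$ with a constant $C$ independent of $r$ is nontrivial, since the $r$-th cumulant of a sum over a graph decomposes as a combinatorial sum over all connected subgraphs containing a fixed vertex, whose cardinality must be controlled via spanning-tree arguments (this is precisely the core technical estimate developed in \cite{feray2017mod}). Once that is in place, the rest reduces to manipulating the resulting power series and optimizing $M$. A secondary subtlety is the truncation comparison $\dkol(W_n, \tilde W_n)$: it is here that the assumption $\delta>6$ becomes important, as it gives enough moments for the variance shift $nA^\delta/M^{\delta-2}$ to be small compared to $\V[S_n]$ at the optimal $M$.
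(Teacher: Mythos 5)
Note first that the paper itself does not prove this statement: Theorem~\ref{thm:modII-result} is cited verbatim from [feray2017mod, Theorem~39] as the former state of the art, which the present paper \emph{improves}. There is therefore no ``paper's own proof'' of this theorem in this document; the closest analogue is the proof of the paper's own Theorems~\ref{thm:dkol-delta-3+} and~\ref{thm:result-refined-delta<2} in Sections~\ref{sec:proof-3+}--\ref{sec:proof-3-}, which strictly dominate the cited result.

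With that caveat, your outline is sound and in fact reproduces precisely the route the authors attribute to [feray2017mod] in Section~\ref{sec:overview}: truncate the summands at a threshold $M$, bound the cumulants of the truncated sum by spanning-tree counts over the dependency graph, pass to a characteristic-function bound, feed it into Esseen's smoothing inequality, control $\dkol(W_n,\tilde W_n)$ separately by a triangle inequality plus Markov-type tail estimates, and then optimize over $M$. Your cumulant bound has the form $r!\,C^r\,n(D+1)^{r-1}M^r$; the paper's Lemma~\ref{lem:better-cumulants} (refining [feray2013mod]) gives $r^{r-2}(2(D+1))^{r-1}L^{r}\,\kA_\delta/L^{\delta}$ --- by Stirling these agree up to the base constant and either suffices. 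Balancing your first two terms at $M\sim(A^\delta\V[S_n]^{3/2}/(D+1)^2)^{1/(\delta+3)}$ does yield the exponent $3\delta/(\delta+3)$ of \eqref{eq:FMN-bound}.

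Where the paper's own strategy \emph{differs} from yours is exactly at the truncation step. Instead of comparing the two Kolmogorov distances and paying the separate tail price $nA^\delta/M^\delta$ plus a mean--variance-shift correction, the authors compare the characteristic functions of $S$ and $S^{(L)}$ directly on a neighborhood of the origin (Lemma~\ref{lem:zone-of-control-unbounded}), keeping the whole argument inside a single Esseen integral. This trades the exponent $\frac{3\delta}{\delta+3}$ for the strictly better $\frac{\delta-2}{\delta+1}$ scaling in $(D+1)/N$, applies for all $\delta\in(2,\infty]$ rather than only $\delta>6$, and no longer needs the extraneous condition \eqref{eq:condition-FMN2}; see Figure~\ref{fig:convergencespeedplot}. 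So your proposal is a correct sketch of the \emph{original} proof of [feray2017mod], not of the machinery the paper actually develops.

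One imprecision worth flagging in your write-up: the step ``$\dkol(W_n,\tilde W_n)$ is at most of the same order'' hides more than a Markov bound. Even on the event $\{S_n=\tilde S_n\}$ the two normalized variables differ, because $\tilde W_n$ is centered at $\E[\tilde S_n]$ and scaled by $\V[\tilde S_n]^{1/2}$. One must verify that $\lvert\E[\tilde S_n]-\E[S_n]\rvert/\sqrt{\V[S_n]}$ and $\lvert\V[\tilde S_n]/\V[S_n]-1\rvert$ are negligible at the optimal $M$, and this is precisely where $\delta>6$ and \eqref{eq:condition-FMN2} enter. You flag this as a ``secondary subtlety'' but do not carry it out; the analogous control is what the paper establishes in \eqref{eq:bound-E(S) minus E(S^-)} and \eqref{eq:bound difference variances} before routing it through the characteristic function rather than through $\dkol$.
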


\paragraph{Graphical comparison.} We illustrate the relative merits of the bounds in Theorems \ref{thm:mod-phi-corollary-new} to \ref{thm:modII-result} on an example. Since we believe (see Conjecture \ref{conj:main}) that the optimal rate has not been reached for the Kolmogorov distance for sums of random variables with a dependency graph, unlike in the i.i.d. case, we care little about constants, and we do not consider them. 
The difference to the i.i.d. case is most visible either when $\V[S]$ is large, or when it is small. We consider the case of large variance. 
 
Partition $V$ (of size $N = n(D+1)$) into $n$ sets of size $D+1$ such that $k\neq \ell\in V$ are adjacent if and only if they are in the same set of the partition. Fix $(Y_k)_{k\in V}$ with unit variance such that $\Cov(Y_k, Y_\ell)=1$ for every $k,\ell\in V$ such that $k$ is incident to $\ell$. Then $\V[S] = N(D+1)$. Fix $c_k = 0$ for every $k\in V$, and define $M_\delta = \sup_{k\in V}\norm{Y_k}_\delta = (\kA_\delta / N)^{1/\delta}$.

Theorem \ref{thm:Stein-result} gives 
\begin{equation*} 
\dkol(W,\kN(0,1)) \lesssim \left(\frac{D+1}{N}\right)^{\frac{1}{4}} M_\delta^{\frac{3}{2}} + \frac{D+1}{N^{\frac{1}{4}}} M_\delta^2 \lesssim \frac{D+1}{N^{1/4}} (M_\delta^2 + M_\delta^{3/2}) , 
\end{equation*} 
whenever $\delta\geq 4$. 
Theorem \ref{thm:Stein-dkol} gives 
\begin{equation*}\begin{split} 
	\dkol(W, \kN(0,1))&\lesssim N(D+1)^{5(\delta-1)} \frac{M_\delta^\delta}{\V[S]^\delta} \lesssim  N^{1-\frac{\delta}{2}} D^{\frac{9}{2}\delta-5} M_\delta^\delta . 
\end{split}\end{equation*} 
For $\delta> 6$, Theorem \ref{thm:modII-result} gives 
\begin{equation*} 
	\dkol(W, \kN(0,1))\lesssim M_\delta^{\frac{3\delta}{\delta+3}}N^{\frac{6-\delta}{2(\delta+3)}} (D+1)^{\frac{\delta}{2(\delta+3)}} 
\end{equation*} 
Theorems \ref{thm:dkol-delta-3+} and \ref{thm:result-refined-delta<2} give 
\begin{equation*} 
	\dkol(W, \kN(0,1))\lesssim \max\left\{M_\delta^{\frac\delta{\delta+1}}, M_\delta^3\right\} \left(\frac{D+1}{N}\right)^{\frac{\delta-2}{2(\delta+1)}} 
\end{equation*} 
for every $\delta>2$. 
 
Figure \ref{fig:convergencespeedplot} gives a comparison of the exponent of $N$ in the results discussed here. The figure is however useful only if $D$ is very small (i.e. typically less than a constant independent of $N$): while the bound of Theorem \ref{thm:Stein-dkol} is essentially optimal for $D$ small, it becomes significantly worse for $D$ large. 
 
For a comparison which also incorporates the size of $D+1$, we determine the best bounds depending on $\delta$ and on $\alpha \define \ln(D+1)/\ln N$. 
In Figure \ref{fig:comparison-D-N}, 
we apply Theorems \ref{thm:dkol-delta-3+} and \ref{thm:result-refined-delta<2} (green), Theorem \ref{thm:Stein-result} (orange) and Theorem \ref{thm:Stein-dkol} (blue) for the case $\alpha\in[0,0.1]$ ($y$-axis) and for $\delta\in(2,10]$ ($x$-axis). Each Theorem gives, for this setup, an upper bound on $\dkol(W, \kN(0,1))$ of the form $C N^x$ for some unique $x\in\mathbb R\cup\set{\infty}$, where $C$ is a constant depending only on the family of random variables and $\delta$, but not on $N$. In the plot, we show what Theorem gives the best (i.e. lowest) $x$ for what regions of $(\delta,\alpha)$. Note that for our Theorems \ref{thm:dkol-delta-3+} and \ref{thm:result-refined-delta<2}, the exponent $x$ equals $JL(\delta,\alpha)\define\frac{(\alpha-1)(\delta-2)}{2(\delta+1)}$, while for Penrose’s Theorem \ref{thm:Stein-result}, the exponent $x$ equals $P(\delta,\alpha)\define \alpha-\frac 14+\infty\mathds 1_{\delta<4}$ and for Chen and Shao’s Theorem \ref{thm:Stein-dkol}, the exponent $x$ equals $CS(\delta,\alpha)=1-\frac{\min(3,\delta)}2 + \alpha\left(\frac 92\min(3,\delta)-5\right)$. 
Note that Figure \ref{fig:convergencespeedplot} consists in the above functions for $\alpha=0$. 
In particular, we have 
\begin{equation*} 
CS(\delta,\alpha)<JL(\delta,\alpha)\iff \alpha< 
\begin{cases}\displaystyle \frac{\delta ^2-2 \delta }{9 \delta ^2-2 \delta -8} & \text{ if }\delta <3 \\\displaystyle \frac{3}{16 \delta +19} & \text{ if }\delta \geq 3\end{cases} 
\end{equation*} 
and 
\begin{equation*} 
P(\delta,\alpha)<JL(\delta,\alpha)\iff \delta\in[4,5) \text{ and } \alpha<\frac{5-\delta}{8+2\delta} 
\end{equation*} 
and finally 
\begin{equation*} 
P(\delta,\alpha)<CS(\delta,\alpha)\iff\delta \geq 4 \text{ and } \alpha>\frac 1{30}. 
\end{equation*}

Note that we considered the case where the $(Y_k)_{k\in V}$ had ``homogeneous’’ distributions. The presence of $\kA_\delta$ in the bounds of Theorems \ref{thm:mod-phi-corollary-new}, \ref{thm:dkol-delta-3+} and \ref{thm:result-refined-delta<2} allows to establish Berry--Esseen bounds even for highly non-identically distributed random variables, for which the above comparison would be even more favorable to our results.

\begin{figure}[ht] 
	\centering 
	\includegraphics[width=0.9\linewidth]{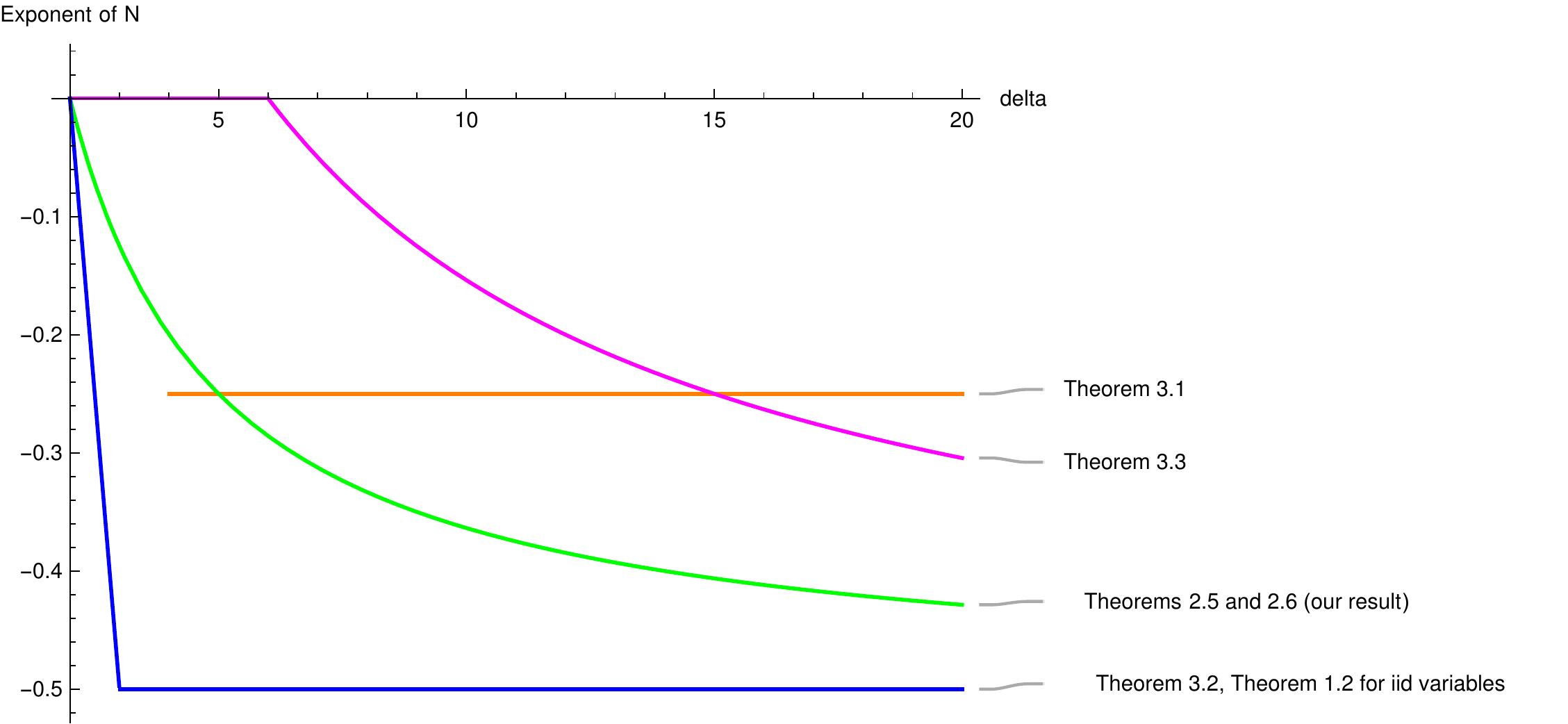} 
	\caption{Comparison of the exponent in $N$ in Theorems 
	\ref{thm:dkol-delta-3+} and \ref{thm:result-refined-delta<2} (our results, in green), 
	\ref{thm:Stein-result} (orange), 
	\ref{thm:Stein-dkol} (blue) and 
	\ref{thm:modII-result} (pink). 
	 The lower the line, the better. The green line is always below the pink line, reflecting the fact that our results are strictly sharper than Theorem \ref{thm:modII-result}. The exponent of $N$ result is strictly smaller than that of Theorem \ref{thm:Stein-result} if and only if $2<\delta< 4$ or $\delta>5$. For comparison, Berry--Esseen for identically distributed and independent random variables is marked as red and cannot be improved upon, cf. \cite{Petrov, Shevtsova2020}; the same exponent is attained in Theorem \ref{thm:Stein-dkol}, however, they pay the price of a much larger power of the maximal degree of the dependency graph. 
	} 
	\label{fig:convergencespeedplot} 
\end{figure} 
 
\begin{figure}[ht] 
	\centering 
	\includegraphics[width=0.9\linewidth]{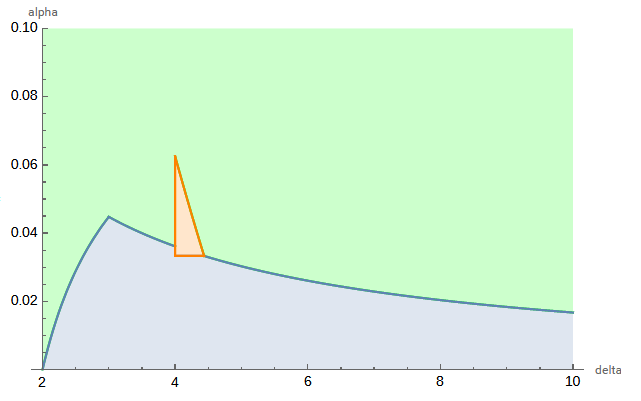} 
	\caption{We display the regions of the set of possible $(\delta,\alpha)$ for which the best bound among $JL(\delta,\alpha)$, $CS(\delta,\alpha)$ and $P(\delta,\alpha)$ is $JL(\delta,\alpha)$ (Theorems \ref{thm:dkol-delta-3+} and \ref{thm:result-refined-delta<2}, green), $P(\delta,\alpha)$ (Theorem \ref{thm:Stein-result}, orange) and $CS(\delta,\alpha)$ (Theorem \ref{thm:Stein-dkol}, blue), as described at the end of section \ref{sec:direct-comparison}. It should be noted that we only consider the region $\alpha\in[0,0.1]$ since the region $\alpha\in[0.1,1]$ is fully green. 
	} 
	\label{fig:comparison-D-N} 
\end{figure}

\section{Optimality}\label{sec:optimality}

In the previous section, we based our comparison on the example where the $(Y_k)_{k\in V}$ were grouped into cliques of $D+1$ of r.v. with maximal correlation inside the cliques and independence between cliques. This is the case where we expect $\dkol(W, \mathcal{N}(0,1))$ to be largest. This example can thus give us insight on how close to optimal our bound is. In the i.i.d. case, the known optimality of the Berry--Esseen bound ensures that for every $C_- < \frac{\sqrt{10}+3}{6\sqrt{2\pi}} \approx 0.4097$ (we fix one for the rest of this section), there exist bounded i.i.d. random variables $(Z_k)_{k\geq 0}$ with unit variance and zero expectation such that (cf. \cite{Esseen56}) 
\begin{align}\label{eq:rev-BE} 
	\dkol\left(\frac{Z_0 + \dots + Z_{n-1}}{\sqrt{n}} , \kN(0,1)\right) > C_- \frac{\E[|Z_0|^3]}{\V[Z_0]^{3/2}} \frac{1}{\sqrt{n}}. 
\end{align} 
As before, 
consider the family $(Y_k)_{1\leq k \leq N}$, where $N = n(D+1)$, and for every $0 \leq k \leq n-1$, 
$$ Y_{k(D+1)+1} = Y_{k(D+1)+2} = \dots = Y_{k(D+1)+D+1} = Z_{k}. $$ 
Then 
$$ S = (D+1) \sum_{k=0}^{n-1} Z_k , $$ 
with $\V[(D+1)Y_{1}] = (D+1)^2 \V[Z_0]$ and $\E[|(D+1)Y_1|^3] = (D+1)^3 \E[|Z_0|^3]$. Applying the ``reverse Berry--Esseen bound'' \eqref{eq:rev-BE} to $S$ gives 
\begin{align*} 
	\dkol\left(W , \kN(0,1)\right) > C_- \frac{\E[|Z_0|^3]}{\V[Z_0]^{3/2}} \sqrt{\frac{D+1}{N}} \ . 
\end{align*} 
On the other hand, Theorem \ref{thm:mod-phi-corollary-new} gives that there exists a universal constant $C'$ such that 
\begin{equation*} 
	\dkol\left(W , \kN(0,1)\right) \leq C' \sqrt{\frac{D+1}N}\max\left\{\frac{\E[\abs{Z_0}^3]}{\V[Z_0]^\frac{3}{2}},  \frac{\|Z_0\|_\infty}{\V[Z_0]}\right\} . 
\end{equation*} 
Theorem \ref{thm:mod-phi-corollary-new} thus gives a matching bound up to a multiplicative constant, meaning that it is, in its full generality, almost optimal. 
 
More generally, for $(Y_k)_{k\in V}$ with $\max_{k\in V}\|Y_k\|_\infty \leq L$, as long as $\V[S]\ge \mathfrak c (D+1) N$ for some $\mathfrak c>0$ independent of $N$ and $D$, 
Theorem \ref{thm:mod-phi-corollary-new} implies 
\begin{equation*} 
	\dkol\left(W, \kN(0,1)\right)\lesssim \sqrt{\frac{D+1}N} \max\left\{\frac{\kA_3}{N}, L\right\}. 
\end{equation*} 
This is as good as \eqref{eq:Berry-Esseen-classical} up to multiplicative constant when substituting $1/N$ by $(D+1)/N$, which we expect to be the correct surrogate. There is not much room for improvement. However, in Theorem \ref{thm:dkol-delta-3+} and \ref{thm:result-refined-delta<2} where the $Y_k$ are not uniformly bounded in $L^\infty$, we expect that our exponents can be improved upon, see Conjecture \ref{conj:main}. 
Since the exponents of $N$ and $D$ play a much larger role for the quality of the bound than the numerical constant in front of them, it is thus of little interest to find the best possible numerical constant.

We can now explain Conjecture \ref{conj:main}. The optimal exponent of $N$ is already known. Indeed, Theorem \ref{thm:Stein-dkol} ensures that, for $D$ fixed, the exponent of $N$ is that of the Berry--Esseen inequality for independent random variables. The exponent of $D$ in Theorem \ref{thm:Stein-dkol} is however not optimal, as is demonstrated by the comparison in Section \ref{sec:direct-comparison}. To obtain the Conjecture, we thus use the heuristic of substituting $N$ with $(D+1)/N$. This heuristic is underpinned by the following two observations: The relevant quantity in the Theorems of Section \ref{sec:main-results} is $(D+1)/N$ instead of $N$; and the example above gives the same results as the i.i.d. case with $N$ replaced by $(D+1)/N$. 
 
\begin{remark} 
	In the independent case, Theorem \ref{thm:dkol-delta-3+} gives as Corollary a result which is similar to the classical Lyapunov Central Limit Theorem. 
	Indeed, recall that for a sequence $(Y_k)_{k\in\N}$ of independent random variables, for which we assume without loss of generality that $\E[Y_k]=0$ for every $k$, and letting $S_N = \sum_{k=1}^N Y_k$, the \emph{Lyapunov condition} 
	\begin{equation}\label{eq:Lyapunov-condition} 
		\lim_{N\to\infty} \frac{1}{\V[S_N]^\frac\delta2} \sum_{k=1}^N \E[\abs{Y_k}^\delta] = 0\quad\text{for some }\delta>2 
	\end{equation} 
	implies $S_N/\sqrt{\V[S_N]} \to\kN(0,1)$ in distribution as $n\to\infty$. 
	Assuming that there are some $0<a<A$ such that $\norm{Y_k}_\delta \in (a,A)$ for every $k$, \eqref{eq:Lyapunov-condition} is equivalent to 
	\begin{equation}\label{eq:Lyapunov-in-disguise} 
		\limsup_{N\to\infty}\frac{N^{\frac 2\delta}}{\V[S_N]} = 0 \ . 
	\end{equation} 
	This is precisely the condition which ensures that the right-hand side of \eqref{eq:dkol-3+-sigma-large} converges to $0$ as $N\to\infty$. 
	Indeed, the statement \begin{equation}\label{eq:convergence-to-0-for-RHS-of-our-THM}\lim_{N\to\infty}\xi_\delta^{-\frac{\delta}{\delta+1}} \left(\frac{D+1}N\right)^{\frac{\delta-2}{2(\delta+1)}}=0 \end{equation} is equivalent to $\lim_{N\to\infty}\xi_\delta^{-\delta} N^{\frac{2-\delta}2}=0$, which for $D=0$ reduces to $\lim_{N\to\infty}  \frac{\kA_\delta}{N} \frac{N}{\V[S]^{\frac\delta2}}=0$. By assumption, $\frac{\kA_\delta}N\in[a^\delta, A^\delta]$, so that \eqref{eq:Lyapunov-in-disguise} implies \eqref{eq:convergence-to-0-for-RHS-of-our-THM}. Similarly (details left to the interested reader), one can show that \eqref{eq:Lyapunov-in-disguise} implies $\lim_{N\to\infty}\frac{1}{\xi_3^3}\sqrt{\frac{D+1}N}=0$. 
\end{remark}

\begin{remark}\label{rem:power-of-sigma-is-optimal} 
	The power for $\xi_\delta$ in \eqref{eq:dkol-3+-sigma-large} is optimal in the sense that for any $\delta\geq2$, multiplying the right-hand-side of \eqref{eq:dkol-3+-sigma-large} by $g(\xi_\delta)$ for any function $g$ that converges to $0$ as $\xi_\delta\to 0$ makes the inequality wrong. It does not mean that the right-hand side of \eqref{eq:dkol-3+-sigma-large} is optimal, but that a different exponent on $N$ and on $D$ would be required in order to improve the bound. 
	To see that the exponent of $\xi_\delta$ is optimal, let $\delta\ge 3$ and let $(Y_k)_{k\ge 2}$ be an independent sequence of random variables such that 
	\begin{equation*} 
		\P[Y_k=k^\frac1\delta]=\P[Y_k=-k^\frac1\delta]=\frac{1-\P[Y_k=0]}2=\frac{k^\frac2\delta-(k-1)^\frac2\delta}{2 k^\frac2\delta}. 
	\end{equation*} 
	By the Mean Value Theorem, for every $k\ge 2$, there exists $\zeta(k)\in[k-1, k]$ such that $k^{\frac 2\delta}- (k-1)^{\frac 2\delta} = \frac2\delta \zeta(k)^{\frac 2\delta-1}$. In particular, $k^{\frac 2\delta}- (k-1)^{\frac 2\delta}\le k^{\frac2\delta-1}$. 
	Then $\E[Y_k]=0$, $\E[\abs{Y_k}^\delta] = k^{1-\frac2\delta} (k^\frac2\delta-(k-1)^\frac2\delta)\le 1$ and $\V[Y_k] = k^{\frac 2\delta}-(k-1)^{\frac 2\delta}$. In particular, $\E[S_N] = 0$ and $\V[S_N] = N^{\frac 2\delta}$ for every $N\ge 2$. 
	 
	We claim that $(S_N-\E[S_N])/\sqrt{\V[S_N]}$ does not converge in distribution to $\kN(0,1)$ as $N\to\infty$. 
	To prove this, note that the Feller condition $\lim_{N\to\infty} \max_{1\le k \le N} \V[Y_k] / \V[S_N]=0$ is satisfied, because $\V[Y_k]$ is uniformly bounded in $k$ while $\V[S_N]\to\infty$ as $N\to\infty$. On the other hand, the Lindeberg condition 
	\begin{equation}\label{eq:Lindeberg-condition} 
		\lim_{N\to\infty} \frac{1}{\V[S_N]} \sum_{k=1}^N \E[Y_k^2 [Y_k^2 > \varepsilon^2 \V[S_N]]] = 0 \quad\text{for every }\varepsilon>0 
	\end{equation} 
	is not satisfied. Indeed, pick $\varepsilon=\frac12$. Then $Y_k^2 [Y_k^2 > \varepsilon^2 \V[S_N]]\neq 0 \iff k^{\frac2\delta}> \frac{N^\frac2\delta}2\iff k>N 2^{-\frac\delta 2}$. Let $N_\dagger$ denote the smallest integer strictly greater than $N 2^{-\frac\delta 2}$. Then 
	\begin{equation*} 
		\frac{1}{\V[S_N]} \sum_{k=1}^N \E[Y_k^2 [Y_k^2 > \varepsilon^2 \V[S_N]]] = N^{-\frac 2\delta} \left(N^{\frac 2\delta} - N_\dagger^{\frac 2\delta}\right)=1-\left(\frac{N_\dagger}{N}\right)^{\frac2\delta}. 
	\end{equation*} 
	But $\lim_{N\to\infty} \frac{N_\dagger}N = 2^{-\frac\delta 2}$ and therefore \eqref{eq:Lindeberg-condition} cannot be satisfied. By the Lindeberg Central Limit Theorem, if the Feller condition is satisfied, then $(S_N-\E[S_N])/\sqrt{\V[S_N]}\to \kN(0,1)$ in distribution as $N\to\infty$ if and only if the Lindeberg condition is satisfied. We conclude that $\dkol((S_N-\E[S_N])/\sqrt{\V[S_N]} , \kN(0,1))\not\to 0$ as $N\to\infty$. 
	 
	To see why the exponent of $\xi_\delta$ in \eqref{eq:dkol-3+-sigma-large} is optimal, observe that we can apply Theorem \ref{thm:dkol-delta-3+} with $D=0$, $A=2$, and $\xi_\delta = \cste \cdot N^{\frac{1}{\delta}-\frac{1}{2}}$. The right-hand side of \eqref{eq:dkol-3+-sigma-large} is thus a positive constant: we cannot multiply the right-hand side of \eqref{eq:dkol-3+-sigma-large} by any $g(\xi_\delta)$ with $g(\xi_\delta)\to 0$ as $\xi_\delta\to 0$, because it would contradict the previous observation. 
	 
	The same example also shows that the power of $\xi_\delta$ in \eqref{eq:result-refined-delta<2} is optimal. 
\end{remark}

\section{Applications} 
\label{sec:applications}

\subsection{Central Limit Theorems and Weak Laws of Large Numbers}
\label{sec:CLTs} 
 
To further illustrate our results, in this section we prove some Central Limit Theorems and Weak Laws of Large Numbers that follow from our results. 
 
\begin{corollary}\label{cor:CLT and WLLN} 
	Consider a triangular array  $(Y_{N,k})_{N\in\N, k\in\set{1,\dots,N}}$ such that for every $N$,\\ $(Y_{N, k})_{k\in\set{1,\dots,N}}$ has a dependency graph of maximum degree $D(N)$. Fix a real-valued family $(c_{N,k})_{N\in\N, k\in\{1,\dots,N\}}$. Define $S_N = \sum_{k=1}^N Y_{N,k}$, $v_N = \sqrt{\V[S_N]}$, $W_N = (S_N - \E[S_N])/v_N$, $\kA_\delta(N) = \sum_{k=1}^N \E[|Y_{N,k}-c_{N,k}|^\delta]$ and 
	$$\xi_\delta(N) = \left(\frac{N}{\kA_\delta(N)}\right)^{\frac 1\delta} \sqrt{\frac{\V[S_N]}{N(D(N)+1)}} \ . $$ 
	Assume that $v(N)>0$ for every $N$ large enough. If 
	\begin{equation}\label{eq:finite-delta-CLT-condition} 
	\delta\in(2,3)\quad\text{ and }\quad 
	\frac 1{\xi_\delta(N)} \left(\frac {D(N)+1}{N}\right)^{\frac 12-\frac 1\delta} \ulim N \infty 0 
	\end{equation} 
	or 
	\begin{equation}\label{eq:finite-delta-CLT-condition>3} 
	\delta\in[3,\infty)\quad\text{ and }\quad 
	 \xi_\delta(N)^{-1} \left(\frac {D(N)+1}N\right)^{\frac 12-\frac 1\delta} \ulim N \infty 0 \quad\text{ and }\quad 	\xi_\delta(N)^{-3} \sqrt{\frac{D(N)+1}{N}} \ulim N \infty 0 
	\end{equation} 
	or 
	\begin{equation}\label{eq:infinite-delta-CLT-condition} 
	\sup_{k\in\N}\norm{Y_{N,k}-c_{N,k}}_\infty<\infty\quad\text{ and }\quad  \frac{D(N)+1}{v_N} \ulim N \infty 0 \quad\text{ and }\quad   \frac{N(D(N)+1)^2}{v_N^3} \ulim N \infty 0, 
	\end{equation} 
	then 
	\begin{equation}\label{eq:CLT} 
	W_N \to\kN(0,1) 
	\end{equation} 
	in distribution as $N\to\infty$. 
	 
	In particular, we also obtain the Weak Law of Large Numbers, i.e. 
	\begin{equation*} 
	\frac{S_N-\E[S_N]}{\V[S_N]}\to 0 
	\end{equation*} 
	in probability as $N\to\infty$ whenever either \eqref{eq:infinite-delta-CLT-condition} holds, or $\liminf_{N\to\infty}\kA_\delta(N)>0$ and either \eqref{eq:finite-delta-CLT-condition} or \eqref{eq:finite-delta-CLT-condition>3} holds. 
\end{corollary}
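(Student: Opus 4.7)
The plan is to apply the appropriate Berry--Esseen bound from Section \ref{sec:main-results} in each of the three hypotheses, show that its right-hand side tends to $0$, and conclude using the fact that convergence in Kolmogorov distance to a continuous target is equivalent to convergence in distribution.

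For condition \eqref{eq:infinite-delta-CLT-condition}, set $L := \sup_{N,k} \|Y_{N,k} - c_{N,k}\|_\infty < \infty$ and bound $\kA_3(N) \leq N L^3$; Theorem \ref{thm:mod-phi-corollary-new} then yields
\begin{equation*}
\dkol(W_N, \kN(0,1)) \leq \max\left\{68.5\, L^3 \frac{N(D(N)+1)^2}{v_N^3},\ 22.88\, L \frac{D(N)+1}{v_N}\right\},
\end{equation*}
and both terms vanish by the two limits in \eqref{eq:infinite-delta-CLT-condition}. For the remaining cases, I would use the algebraic identity
\begin{equation*}
\xi_\delta^{-\delta/(\delta+1)} \left(\frac{D+1}{N}\right)^{(\delta-2)/(2(\delta+1))} = \left[\xi_\delta^{-1}\left(\frac{D+1}{N}\right)^{1/2-1/\delta}\right]^{\delta/(\delta+1)},
\end{equation*}
obtained by matching the exponent of $(D+1)/N$. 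This shows that the first (and only, when $\delta\in(2,3)$) term in the bounds of Theorems \ref{thm:dkol-delta-3+} and \ref{thm:result-refined-delta<2} vanishes precisely under the first limit appearing in \eqref{eq:finite-delta-CLT-condition} or \eqref{eq:finite-delta-CLT-condition>3}; when $\delta\in[3,\infty)$, the second term of the maximum in \eqref{eq:dkol-3+-sigma-large} is controlled by the additional assumption $\xi_\delta(N)^{-3}\sqrt{(D(N)+1)/N}\to 0$.

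For the Weak Law of Large Numbers I would write $(S_N - \E[S_N])/\V[S_N] = W_N / v_N$. Since $W_N$ converges in distribution it is tight, so it suffices to show $v_N \to \infty$. This is immediate in case \eqref{eq:infinite-delta-CLT-condition} from $(D(N)+1)/v_N \to 0$ and $D(N)+1 \geq 1$. In the other two cases, $\liminf \kA_\delta(N) > 0$ provides $c>0$ with $(\kA_\delta(N)/N)^{1/\delta} \geq c\, N^{-1/\delta}$, so unfolding the definition of $\xi_\delta$ gives
\begin{equation*}
v_N = \xi_\delta(N) \sqrt{N(D(N)+1)} \left(\frac{\kA_\delta(N)}{N}\right)^{1/\delta} \geq c \left[\xi_\delta(N)\left(\frac{D(N)+1}{N}\right)^{-(1/2-1/\delta)}\right] (D(N)+1)^{1-1/\delta},
\end{equation*}
in which the bracketed factor tends to $\infty$ by the first limit in \eqref{eq:finite-delta-CLT-condition} or \eqref{eq:finite-delta-CLT-condition>3}, while $(D(N)+1)^{1-1/\delta} \geq 1$ since $\delta > 2$. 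There is no genuine obstacle in this proof; the only subtlety is the routine bookkeeping of exponents needed to recognize the quantities in \eqref{eq:finite-delta-CLT-condition} and \eqref{eq:finite-delta-CLT-condition>3} as exactly the ones appearing (after rewriting) in Theorems \ref{thm:dkol-delta-3+} and \ref{thm:result-refined-delta<2}.
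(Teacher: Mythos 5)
Your proof is correct and takes essentially the same approach as the paper's: for the CLT, apply Theorems \ref{thm:mod-phi-corollary-new}, \ref{thm:dkol-delta-3+}, or \ref{thm:result-refined-delta<2} and note that the hypotheses make their right-hand sides vanish (your algebraic identity just makes explicit the bookkeeping the paper leaves implicit); for the WLLN, show $v_N\to\infty$ (your lower bound on $v_N$ is the same rearrangement the paper uses, up to taking a reciprocal) and invoke Slutsky's theorem.
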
 
 
\begin{proof}[Proof of Corollary \ref{cor:CLT and WLLN}] 
	\eqref{eq:finite-delta-CLT-condition}, \eqref{eq:finite-delta-CLT-condition>3} or \eqref{eq:infinite-delta-CLT-condition} ensures that the right-hand side of Theorem \ref{thm:result-refined-delta<2}, \ref{thm:dkol-delta-3+} or \ref{thm:mod-phi-corollary-new} converges to $0$ as $N\to\infty$, which proves \eqref{eq:CLT}. 
	 
	The first step towards the Weak Law of Large Numbers is to check that $v_N\to\infty$. This is implied by the last condition of \eqref{eq:infinite-delta-CLT-condition}. In the other cases, 
	\begin{equation*} 
		\frac 1 {\xi_\delta(N)} \left(\frac{D(N)+1}N\right)^{\frac 12-\frac 1\delta} = \frac 1{v_N} \left(\frac{\kA_\delta(N)}N\right)^{\frac 1\delta} N^{\frac 1\delta} (D(N)+1)^{1-\frac 1\delta}\ge \frac 1{v_N}\kA_\delta(N)^{\frac 1\delta}. 
	\end{equation*} 
	thus $\kA_\delta(N)^{1/\delta}/v_N \to 0$ as $N\to\infty$ under \eqref{eq:finite-delta-CLT-condition} or \eqref{eq:finite-delta-CLT-condition>3}. 
	Together with $\liminf_{N\to\infty} \kA_\delta(N)>0$, it implies $v_N \to \infty$ as $N\to\infty$. 
	By Slutzky's Theorem together with \eqref{eq:CLT} we have convergence in distribution of $\frac{S_N-\E[S_N]}{\V[S_N]}$ to $0$. We conclude by observing that convergence in distribution to a constant implies convergence in probability. 
\end{proof} 
 
\begin{remark} 
	Since $\xi_\delta(N) \le 1$ for every $\delta\in(2,\infty)$, conditions \ref{eq:finite-delta-CLT-condition} and \ref{eq:finite-delta-CLT-condition>3} imply in particular $\frac{D(N)+1}N\to 0$ as $N\to\infty$. Furthermore, $v_N^2\le N (D(N)+1) (\sup_{k\in\{1,\dots,N\}}\norm{Y_{N,k}-c_{N,k}}_\infty)^2$, so that condition \ref{eq:infinite-delta-CLT-condition} also implies $\frac{D(N)+1}N\to0$ as $N\to\infty$. 
\end{remark} 
 
\begin{remark} 
	To illustrate the sharpness of condition \ref{eq:infinite-delta-CLT-condition}, consider for example $(X_n)_{n\in\Z_{\ge 0}}$ i.i.d. with 
	\begin{equation*} 
	\P[X_0=1]=\P[X_0=-1]=\frac 12. 
	\end{equation*} 
	In particular, $\E[X_0]=0, \V[X_0]=1$. 
	Fix a function $f:\N\to\N$ (for example $f(N)=\floor{N^{2/3}}$) such that $f(N)\le N$ for every $N\in\N$ and 
	\begin{equation*}\lim_{N\to\infty}\frac{f(N)}N=\lim_{N\to\infty} \frac{N}{f(N)^2} = 0.\end{equation*} 
	Consider the triangular array of random variables $(Y_{N, k})_{N\in\N, k\in\set{1,\dots,N}}$ given by $Y_{N,k}=X_0$ if $k\le f(N)$ and $Y_{N,k}=X_{k-f(N)}$ if $k>f(N)$. 
	For every fixed $N\in\N$, $(Y_{N,k})_{k\in\set{1,\dots,N}}$ thus admits a dependency graph such that $D+1=f(N)$. Furthermore, $\sup_{N,k}\norm{Y_{N,k}}_{L^\infty}=1<\infty$. 
	Let $S_N=\sum_{k=1}^N Y_{N,k}$. The variance of $S_N$ is 
	\begin{equation*} 
		\V[S_N]=f(N)^2\V[X_0] + (N-f(N))\V[X_1]=f(N)^2+N-f(N). 
	\end{equation*} 
	By the classical Central Limit Theorem, 
	\begin{equation*} 
		\frac{1}{\sqrt{N-f(N)}}\sum_{k=f(N)+1}^N Y_{N,k}\to\kN(0,1) \text{ in distribution as }N\to\infty. 
	\end{equation*} 
	Since $f(N)^2/N \to \infty$ as $N\to\infty$, 
	\begin{equation*} 
		\frac{1}{\sqrt{f(N)^2+N-f(N)}} \sum_{k=f(N)+1}^N Y_{N,k}\to 0 \text{ in distribution as }N\to\infty. 
	\end{equation*} 
	Furthermore, 
	\begin{equation*} 
		\frac{1}{\sqrt{f(N)^2+N-f(N)}}\sum_{k=1}^{f(N)} Y_{N,k} = \frac{f(N)}{\sqrt{f(N)^2+N-f(N)}} X_0\to X_0\text{ almost surely as }N\to\infty. 
	\end{equation*} 
	Hence, 
	\begin{equation*} 
		\frac{S_N}{\sqrt{\V[S_N]}}\to X_0\text{ in distribution as }N\to\infty, 
	\end{equation*} 
	but $X_0$ is not $\kN(0,1)$ distributed. The reason that Corollary \ref{cor:CLT and WLLN} doesn't apply here is that $\lim_{N\to\infty}\frac{D+1}v=\lim_{N\to\infty}\frac{f(N)}{\sqrt{f(N)^2+N-f(N)}}=1\neq 0$. 
\end{remark}

\subsection{Application to \texorpdfstring{$U$}{U}-statistics}\label{sec:U-statistics}

Throughout the rest of the text, for every $a,b\in\R$, $k\in\N$ and every set $A$, we write $\llbracket a,b\rrbracket\define\Z\cap[a,b]$ and $\binom Ak\define\set{B\subset A:\abs{B}=k}$.

We apply Theorems \ref{thm:mod-phi-corollary-new}, \ref{thm:dkol-delta-3+} and \ref{thm:result-refined-delta<2} to \emph{$U$-statistics}, a class of statistics that is especially important in estimation theory. 
 
\begin{definition} 
	Let $\kS$ be a measurable space, $\ell\in\N$, $(X_n)_{n\in\N}$ a sequence of $\kS$-valued random variables and $f:\kS^\ell\to\R$ a measurable function. The \emph{$U$-statistics} of $f$ are defined, for each $n\in\N$, as 
	\begin{equation}\label{eq:def-U-stat} 
		U_n=U_n(f)=U_n(f, (X_n)_{n\in\N})=\frac{1}{\ell!\binom nl}\sum_{\alpha\in\Lambda_{n, \ell}} f(X_\alpha), 
	\end{equation} 
	where 
	\begin{equation*} 
		\Lambda_{n,\ell}\define\set*{(\alpha_1,\dots,\alpha_\ell)\in\llbracket 1,n\rrbracket^\ell: \set{\alpha_1,\dots,\alpha_\ell}\in\binom{\llbracket 1,n\rrbracket}{\ell}} 
	\end{equation*} 
	and 
	\begin{equation*}	 
		X_\alpha\define(X_{\alpha_1},\dots,X_{\alpha_\ell}) \qquad \text{ for } \qquad \alpha=(\alpha_1, \dots, \alpha_\ell)\in\Lambda_{n,\ell}. 
	\end{equation*} 
\end{definition} 
 
\begin{examples*} 
 
	\begin{enumerate} 
		\item If $\kS=\R, \ell=1$ and $f:\R\to\R,x\mapsto x$, the U-statistics are the \emph{sample means} 
		\begin{equation}\label{eq:sample-mean} 
			U_n = \frac{X_1+\dots+X_n}n \define \bar X_n . 
		\end{equation} 
		\item If $\kS=\R, \ell=2$ and $f:\R^2\to\R, (x,y)\mapsto (x-y)^2/2$, the U-statistics are the \emph{sample variances} 
		\begin{equation}\label{eq:sample-variance}\begin{split} 
				U_n &= \frac 1{4\binom n2}\sum_{i,j=1}^n (X_i-X_j)^2 = \frac{1}{n-1}\sum_{k=1}^n (X_k-\bar X_n)^2 \define \hat\Sigma_n . 
		\end{split}\end{equation} 
	\end{enumerate} 
\end{examples*} 
 
When using U-statistics as estimators, a question of importance is whether they are consistent, and if they are, at what speed they converge to the quantity that they estimate. A satisfying answer to this question is usually given in the form of a Central Limit Theorem. 
Janson \cite{JansonU2021} proved a Central Limit Theorem when the $(X_n)_{n\in\N}$ are \emph{$m$-dependent}: 
 
\begin{definition}\label{def:m-dependent} 
	Let $V\subset\Z$ and $m\in\Z_{\ge 0}$. A family of random variables $(X_n)_{n\in V}$ is called \emph{$m$-dependent} if and only if, for every $k\in\N$, $(X_{n})_{n\le k}$ and $(X_n)_{n\ge k+m+1}$ are independent. 
\end{definition} 
 
Independence is equivalent to $0$-dependence. A $m$-dependent family of random variables always admits a dependency graph with maximum degree at most $2m$, namely the graph in which $i,j\in V$ are connected if and only if $1\le\abs{i-j}\le m$. 
This graph has maximal degree less or equal than $2m$. On the other hand, if $(X_n)_{n\in V}$, with $V\subset\N$ finite, is a family of random variables admitting a dependency graph of maximal degree less or equal than $2m$, there need not exist a permutation $\sigma:V\to V$ such that $(X_{\sigma(n)})_{n\in V}$ is $m$-dependent.   
 
\begin{theorem}[{\cite[Theorem 3.8]{JansonU2021}}]\label{thm:Janson} 
	Let $m\in\Z_{\ge 0}$ and let $(X_n)_{n\in\N}$ be a stationary sequence of $m$-dependent random variables with values in a measurable space $\kS$. Let $l\in\N$ and let $f:\kS^l\to\R$ satisfy 
	\begin{equation*} 
		f(X_\alpha)\in L^2\quad\text{ for every } \alpha\in\Lambda_{n,\ell}. 
	\end{equation*} 
	Let $V_n \define l! \binom nl U_n=\sum_{\alpha\in\Lambda_{n,\ell}} f(X_\alpha)$ denote the \enquote{non-normalized U-statistic}. Then there exists a $\sigma^2\in[0,\infty)$ such that 
	\begin{equation}\label{eq:convergence-of-variances-of-Vn} 
		\lim_{n\to\infty}\frac{\V[V_n]}{n^{2l-1}}=\sigma^2, 
	\end{equation} 
	and 
	\begin{equation}\label{eq:convergence-result-Janson-form} 
		\frac{V_n-\E[V_n]}{n^{l-\frac 12}}\to\kN(0, \sigma^2)\quad\text{in distribution as }n\to\infty. 
	\end{equation} 
\end{theorem}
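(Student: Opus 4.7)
The plan is to apply Hoeffding's orthogonal decomposition to split the centered $U$-statistic into a dominant linear term plus lower-order fluctuations, and then to invoke Corollary \ref{cor:CLT and WLLN} on the linear term (a sum of $m$-dependent $L^2$ random variables) by means of a truncation argument.

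\textbf{Step 1 (decomposition).} Set $\mu := \E[f(X_\alpha)]$ (independent of $\alpha$ by stationarity), and for $k = 1, \ldots, \ell$ define the symmetric centered kernels $g_k : \kS^k \to \R$ inductively by subtracting from the conditional expectation $\E[f(x_1,\ldots,x_k,X_{k+1},\ldots,X_\ell)]$ the contributions of lower-order projections. The classical Hoeffding identity
\begin{equation*}
V_n - \E[V_n] \;=\; \sum_{k=1}^\ell c_{n,k}\, T_n^{(k)}, \qquad c_{n,k} := \binom{\ell}{k}\binom{n-k}{\ell-k}, \qquad T_n^{(k)} := \sum_{\beta \in \binom{\llbracket 1,n\rrbracket}{k}} g_k(X_\beta),
\end{equation*}
then holds, with combinatorial prefactors $c_{n,k} \asymp n^{\ell-k}$.

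\textbf{Step 2 (variance).} For fixed $k$, the random variables $(g_k(X_\beta))_\beta$ are $m$-dependent in the sense that $g_k(X_\beta)$ and $g_k(X_\gamma)$ are independent whenever $\min_{i\in\beta,j\in\gamma}|i-j|>m$. Only $O(n^k)$ pairs $(\beta,\gamma)$ thus contribute non-trivially to $\V[T_n^{(k)}]$, so $\V[T_n^{(k)}] = O(n^k)$ and $\V[c_{n,k} T_n^{(k)}] = O(n^{2\ell-k})$. For $k\geq 2$ these contributions are $o(n^{2\ell-1})$ and hence negligible in $L^2$ after normalization by $n^{\ell-1/2}$. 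By stationarity, a telescoping of covariances at lags $0, 1, \ldots, m$ shows that $\V[T_n^{(1)}]/n$ converges to some $s^2 \in [0,\infty)$, which together with the combinatorial prefactor yields \eqref{eq:convergence-of-variances-of-Vn} with $\sigma^2 = \bigl(\ell/(\ell-1)!\bigr)^2 s^2$.

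\textbf{Step 3 (CLT on the linear term).} It remains to prove
\begin{equation*}
\frac{c_{n,1}}{n^{\ell-1/2}}\, T_n^{(1)} \;=\; \frac{\ell \binom{n-1}{\ell-1}}{n^{\ell-1/2}} \sum_{i=1}^n g_1(X_i) \;\longrightarrow\; \kN(0,\sigma^2)
\end{equation*}
in distribution as $n\to\infty$. The sequence $(g_1(X_i))_{i\geq 1}$ is stationary, centered and $m$-dependent, so it admits a dependency graph of maximum degree $D = 2m$. When $\sigma^2 > 0$ and $g_1 \in L^\infty$, condition \eqref{eq:infinite-delta-CLT-condition} of Corollary \ref{cor:CLT and WLLN} is satisfied, since both $(D+1)/v_n$ and $n(D+1)^2/v_n^3$ are of order $n^{-1/2}$. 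For general $g_1 \in L^2$, truncate $g_1$ at height $K$, apply the bounded case to the truncation, and let $K \to \infty$ via Slutsky, the tail error being controlled in $L^2$ uniformly in $n$ by stationarity. The degenerate case $\sigma^2 = 0$ follows directly from Chebyshev's inequality.

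The main obstacle is Step 2: establishing existence of $\lim_n \V[T_n^{(1)}]/n$ and simultaneously showing that the higher-order Hoeffding terms contribute only $o(n^{2\ell-1})$ under the minimal $L^2$ assumption requires careful combinatorics and a precise accounting of covariances allowed by stationarity and $m$-dependence. A secondary technical obstacle is that Corollary \ref{cor:CLT and WLLN} does not accept $L^2$ inputs directly, so the truncation-and-Slutsky bridge in Step 3 is genuinely needed.
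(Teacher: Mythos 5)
This theorem is cited from \cite{JansonU2021} and is not proven in the paper, so there is no in-paper proof to compare against; you are reconstructing Janson's result. The Hoeffding-decomposition strategy is a reasonable route, but two steps of your proposal are wrong or insufficiently justified.

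First, the claim that $\mu := \E[f(X_\alpha)]$ is ``independent of $\alpha$ by stationarity'' is false. Stationarity gives invariance only under shifting all indices of $\alpha$ by the same amount; for an $m$-dependent sequence the law of $(X_{\alpha_1},\dots,X_{\alpha_\ell})$ depends on the gaps between the indices. If $m\geq 1$ then $(X_1,X_2)$ and $(X_1,X_{m+2})$ in general have different laws, so $\E[f(X_1,X_2)]\neq\E[f(X_1,X_{m+2})]$. The algebraic Hoeffding identity (built from the marginal law) therefore reads $V_n - |\Lambda_{n,\ell}|\mu = \sum_k c_{n,k} T_n^{(k)}$, and one must separately show $\E[V_n] - |\Lambda_{n,\ell}|\mu = O(n^{\ell-1}) = o(n^{\ell-1/2})$; likewise the $T_n^{(k)}$ are not centered as you use them. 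This is repairable bookkeeping, but as written the identity is wrong.

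Second, and more seriously, your variance estimate $\V[T_n^{(k)}] = O(n^k)$ is asserted with an incorrect justification. You claim that $m$-dependence alone forces only $O(n^k)$ pairs $(\beta,\gamma)$ to contribute, but the number of interacting pairs --- those with $\min_{i\in\beta,j\in\gamma}|i-j|\le m$ --- is $\Theta(n^{2k-1})$, not $O(n^k)$: for each of the $\Theta(n^k)$ sets $\beta$ there are $\Theta(n^{k-1})$ sets $\gamma$ with some element within distance $m$ of $\beta$. If every such pair contributed $\Theta(1)$, then $c_{n,k}^2\,\V[T_n^{(k)}]$ would be $\Theta(n^{2\ell-1})$, the same order as the linear term, and the higher-order contributions would not be negligible. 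The bound $O(n^k)$ is nevertheless true, but it hinges on the \emph{degeneracy} of the Hoeffding kernel $g_k$ with respect to i.i.d.\ copies of the marginal: for almost all interacting pairs some index $i^*\in\beta\cup\gamma$ is separated by more than $m$ from every other index of $\beta\cup\gamma$, and then conditioning on $(X_j)_{j\in(\beta\cup\gamma)\setminus\{i^*\}}$ and using $\E[g_k(\,\cdot\,,Z)]=0$ for an independent marginal copy $Z$ forces the covariance to vanish. Only the $O(n^k)$ pairs in which every index of $\beta\cup\gamma$ lies within $m$ of another such index survive. Your proposal never invokes degeneracy, so the central variance step does not close as written. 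Step 3 is fine modulo the above (the Hoeffding--Robbins CLT for $m$-dependent stationary $L^2$ sequences would even spare you the truncation detour), and the Cauchy--Schwarz bounds on cross-terms work out as you sketch, but the degeneracy argument in Step 2 is the heart of the matter and must be made explicit.
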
 
 
\begin{remark} 
	Using that $\frac{\Gamma(x+1)x^\varepsilon}{\Gamma(x+1+\varepsilon)}=1+O(1/x)$ as $x\to\infty$ for every fixed $\varepsilon>0$, cf. \cite[8.328, 2.]{Gradshteyn-Ryzhik}, 
	 $$\frac{\V[V_n]}{\V[U_n]} = \left(\frac{n!}{(n-l)!}\right)^2 = n^{2l} (1+O(1/n))$$ 
	  as $n\to\infty$. Therefore, 
	\eqref{eq:convergence-of-variances-of-Vn} 
	is equivalent to 
	\begin{equation*} 
		\lim_{n\to\infty} n \V[U_n] = \sigma^2. 
	\end{equation*} 
	and \eqref{eq:convergence-result-Janson-form} 
	is equivalent to 
	\begin{equation*} 
		\frac{U_n-\E[U_n]}{\sqrt{\V[U_n]}}\to\kN(0,1) \quad\text{in distribution as }n\to\infty. 
	\end{equation*} 
\end{remark}

Under additional assumptions, we can supplement Theorem \ref{thm:Janson} with an estimate of the speed at which $(V_n-\E[V_n])/\sqrt{\V[V_n]}$ converges towards $\mathcal{N}(0,1)$. The idea, which is used both in \cite[Section 9]{JansonU2021} and in Corollary \ref{cor:U-stats} below, relies on writing the U-statistic as a sum of the r.v. $(f(X_\alpha))_{\alpha\in\Lambda_{n,\ell}}$, which have an explicit dependency graph. Every Berry--Esseen-type estimate on r.v. with a dependency graph then directly yields an estimate on the speed of convergence of the U-statistic.

Let us describe the dependency graph. For generality we consider $(X_1, \dots, X_n)$ with a dependency graph of maximum degree $m$ --- this includes the case of $m$-dependence. Consider the graph on the vertex set $V=\Lambda_{n,\ell}$ such that $\alpha=(\alpha_1,\dots,\alpha_\ell), \beta=(\beta_1,\dots,\beta_\ell)\in V$ are connected if and only if there exist $i,j\in\llbracket 1,\ell\rrbracket$ such that either $\alpha_i = \beta_j$, or $\alpha_i, \beta_j$ are connected in the dependency graph of the $(X_1,\dots, X_n)$. This graph has cardinality $N\define\abs{V}=\binom nl \ell! = \frac{n!}{(n-\ell)!}$ and maximal degree 
	\begin{equation}\label{eq:bound-D-U} 
	D\le \ell^2 (m+1)\binom{n-1}{\ell-1} (\ell-1)! - 1 = \frac{\ell^2(m+1)}{n} N -1 . 
	\end{equation} 
	Indeed, for any fixed $\alpha$, to construct any adjacent $\beta$ we first need to choose at least one pair of coordinates $(i,k) \in\set{1,\dots, \ell}^2$ and $\beta_k$ such that $\alpha_i$ and $\beta_k$ are connected in the dependency graph of $(X_1,\dots, X_n)$ or $\alpha_i = \beta_k$. For this, there are at most $\ell^2 (m+1)$ possibilities. Then one has to choose the other $\ell-1$ coordinates, for which there are $\binom{n-1}{\ell-1}(\ell-1)!$ possibilities, since order matters. Finally one has to subtract $1$ because $\alpha$ is not adjacent to itself.

This idea applies in more general situations than considered in \cite{JansonU2021}. Constrained, resp. exactly constrained U-statistics \cite[Section 3]{JansonU2021} differ only in the set of all $\alpha$ such that $f(X_\alpha)$ appears in $U_n$, as well as in the dependency graph that they induce on the family $(f(X_\alpha))_{\alpha}$. Furthermore, we need not assume that the $(X_n)_{n \in \N}$ be $m$-dependent as in \cite{JansonU2021}: it suffices that they have a dependency graph of maximal degree $2m$. We also do not assume that the $(X_n)_{n\in \N}$ are stationary --- this is used by Janson to determine the variance of the U-statistic, something that we do not consider. Finally, it is possible to consider the case where $m$ depends on $n$, whereas $m$ is fixed in  \cite{JansonU2021}.

\begin{corollary}\label{cor:U-stats-bounded} 
	Let $(X_k)_{1\leq k \leq n}$ be random variables with values in a measurable space $\kS$ that admit a dependency graph of maximal degree $m$. Let $(c_\alpha)_{\alpha\in\Lambda_{n,\ell}}$ be an arbitrary family of real numbers, $\ell\in\N$, and $f:\kS^\ell\to\R$ measurable such that $\max_{\alpha\in\Lambda_{n,\ell}}\norm{f(X_\alpha)-c_\alpha}_{\infty}\define L \in(0,\infty)$. 
	Recall $U_n$ defined by \eqref{eq:def-U-stat} and $V_n\define l! \binom nl U_n$ as in Theorem \ref{thm:Janson}. 
	Define furthermore $\mathbf a_\delta\define \left( \kA_\delta / \abs{\Lambda_{n,\ell}}\right)^{1/\delta}=\left(\kA_\delta (n-\ell)!/n!\right)^{1/\delta}$. 
	Then 
	\begin{equation}\label{eq:U-stats-4} 
		\dkol\left(\frac{U_n-\E[U_n]}{\sqrt{\V[U_n]}}, \kN(0,1)\right)\le 227.5 \sqrt{\frac{\ell^2(m+1)}n} \Xi_\infty^{-3}, 
	\end{equation} 
	where 
	\begin{equation*} 
		\Xi_\infty\define\frac 1L \sqrt{\frac{\V[V_n]}{\ell^2(m+1)n^{2l-1}}}\in(0,1]. 
	\end{equation*} 
\end{corollary}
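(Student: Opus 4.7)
The strategy is to rewrite $V_n = \ell! \binom{n}{\ell} U_n = \sum_{\alpha \in \Lambda_{n,\ell}} f(X_\alpha)$ as a sum of $N := |\Lambda_{n,\ell}| = n!/(n-\ell)!$ random variables indexed by $\Lambda_{n,\ell}$, then apply Theorem \ref{thm:mod-phi-corollary-new} to this sum. Because the normalized variable $(U_n - \E[U_n])/\sqrt{\V[U_n]}$ agrees with $(V_n - \E[V_n])/\sqrt{\V[V_n]}$, any Kolmogorov bound on the latter transfers verbatim.

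The first substantive step is to identify the right dependency graph on $V = \Lambda_{n,\ell}$ for the family $(f(X_\alpha))_{\alpha \in V}$: declare $\alpha, \beta$ adjacent whenever there exist $i, j \in \llbracket 1, \ell \rrbracket$ with $X_{\alpha_i}$ and $X_{\beta_j}$ either equal or adjacent in the given dependency graph of $(X_k)_{1\le k\le n}$. Disjoint sets of multi-indices then trigger disjoint families of $X_k$, which are independent by hypothesis, so this is a valid dependency graph. The counting argument already displayed in the paragraphs preceding the corollary gives $D+1 \le \ell^2(m+1)\binom{n-1}{\ell-1}(\ell-1)! = \frac{\ell^2(m+1)}{n} N$. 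One should also check $\Xi_\infty \in (0,1]$: upper-bounding $\V[V_n] \le N(D+1) L^2 \le \ell^2(m+1) n^{2\ell-1} L^2$ by vanishing of covariances across non-adjacent vertices and $\|f(X_\alpha) - c_\alpha\|_\infty \le L$ yields $\Xi_\infty \le 1$ directly.

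Applying Theorem \ref{thm:mod-phi-corollary-new} to the shifted family $(f(X_\alpha) - c_\alpha)_{\alpha \in V}$ with $v = \sqrt{\V[V_n]}$ gives
\begin{equation*}
 \dkol(W, \kN(0,1)) \le \max\left\{68.5 \frac{(D+1)^2 \kA_3}{v^3}, \ 22.88 \frac{L(D+1)}{v}\right\}.
\end{equation*}
Bounding $\kA_3 \le N L^3$ and using $N \le n^\ell$, $D+1 \le \ell^2(m+1) n^{\ell-1}$, together with the identity $v = L \ell (m+1)^{1/2} n^{\ell-1/2} \Xi_\infty$ obtained by rearranging the definition of $\Xi_\infty$, both arguments of the max simplify: the first to at most $68.5\, \Xi_\infty^{-3} \sqrt{\ell^2(m+1)/n}$, and the second to at most $22.88\, \Xi_\infty^{-1} \sqrt{\ell^2(m+1)/n}$. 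Since $\Xi_\infty \le 1$, the second is dominated by $22.88\, \Xi_\infty^{-3} \sqrt{\ell^2(m+1)/n}$, so the max is bounded by $68.5\, \Xi_\infty^{-3} \sqrt{\ell^2(m+1)/n}$, which is trivially $\le 227.5\, \Xi_\infty^{-3} \sqrt{\ell^2(m+1)/n}$.

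The only genuinely non-automatic step is the degree count, which requires the combinatorial observation about $\Lambda_{n,\ell}$ and the structure of the dependency graph on $(X_k)$. Everything else is routine algebra converting the output of Theorem \ref{thm:mod-phi-corollary-new} into the $\Xi_\infty$-normalized form; in particular, the choice $\delta = \infty$ (via Theorem \ref{thm:mod-phi-corollary-new}) is crucial because for finite $\delta$ in Theorem \ref{thm:dkol-delta-3+} the first term of the max need not fit under $227.5\, \Xi_\infty^{-3} \sqrt{\ell^2(m+1)/n}$ without extra assumptions on $n$ relative to $\ell^2(m+1)$.
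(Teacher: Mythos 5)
Your proof is correct and follows essentially the same route the paper intends: construct the dependency graph on $\Lambda_{n,\ell}$ with the degree bound $D+1 \leq \frac{\ell^2(m+1)}{n}N$, apply Theorem~\ref{thm:mod-phi-corollary-new} to $(f(X_\alpha)-c_\alpha)_\alpha$, and convert to the $\Xi_\infty$-normalized form using $N(D+1)\leq \ell^2(m+1)n^{2\ell-1}$ and $\Xi_\infty \leq 1$; this mirrors the structure of the paper's proof of Corollary~\ref{cor:U-stats}, to which it defers. Your closing observation that the $L^\infty$ hypothesis is what makes the single-term bound possible is a fair remark, though not strictly needed to justify the result.
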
 
 
The proof is a straightforward application of Theorem \ref{thm:mod-phi-corollary-new}. We refer the reader to the proof of Corollary \ref{cor:U-stats} which follows the same structure.

Corollary \ref{cor:U-stats-bounded} is similar to \cite[Theorem 3.14]{JansonU2021}. 
Let us compare both. 
While Corollary \ref{cor:U-stats-bounded} is formulated in a more general setting than \cite[Theorem 3.14]{JansonU2021} (for example the $(X_n)_{n\in \N}$ are not assumed to be $m$-dependent), since the ideas used in the proof of both theorems are the same, it takes little effort to generalize \cite[Theorem 3.14]{JansonU2021} to the same setting. For the same reason, the proof of \cite[Theorem 3.14]{JansonU2021} can also yield explicit, nonasymptotic estimates like \eqref{eq:U-stats-4}. 
The additional requirements in 
\cite[Theorem 3.14]{JansonU2021} that $m$ be fixed, that $(X_n)_{n\in\N}$ be stationary and that $\liminf_{n\to\infty}\frac{\V[U_n]}{n^{2l-1}}>0$ guarantee that the right-hand side of \eqref{eq:U-stats-4} converges to $0$: Indeed, for any fixed $\ell$, whenever $m+1=o(n^\frac 14)$ and $\liminf_{n\to\infty}\frac{\V[V_n]}{n^{2l-1}}>0$, the right-hand side of \eqref{eq:U-stats-4} converges to $0$. 
 
The most significant difference is that \cite[Theorem 3.14]{JansonU2021} uses \cite[Theorem 2.2]{Rinott} while we use the slightly better Theorem \ref{thm:mod-phi-corollary-new}. Some other results that can be used are listed in \cite[Remark 9.1]{JansonU2021}; they require at least that $\sup_\alpha \E[|f(X_\alpha)|^4] < \infty$. We are able to relax this assumption in Corollary \ref{cor:U-stats} by using our results, Theorems \ref{thm:mod-phi-corollary-new}, \ref{thm:dkol-delta-3+} and \ref{thm:result-refined-delta<2}: see Corollary \ref{cor:U-stats} below. 
 
\begin{remark} 
Theorem \ref{thm:Stein-result} and Theorem \ref{thm:Stein-dkol} give trivial bounds when used on the family $(f(X_\alpha))_{\alpha\in\Lambda_{n,\ell}}$. Indeed, the degree of the dependency graph is larger than $N^{1/2}/100$ for $l\geq 2$ and $n$ large enough. Bounds with a better dependency on the degree of the dependency graph, like ours, are required. 
\end{remark}

\begin{corollary}\label{cor:U-stats} 
	Let $(X_k)_{1\le k\le n}$, $\ell$, $(c_\alpha)_{\alpha \in \Lambda_{n,\ell}}$ be as in Corollary \ref{cor:U-stats-bounded}. Let $f:\kS^\ell\to\R$ measurable such that $\sum_{\alpha\in\Lambda_{n,\ell}}\norm{f(X_\alpha)-c_\alpha}_{\delta}^\delta\define\kA_\delta\in(0,\infty)$ for some $\delta\in(2,\infty)$. Let $U_n$ and $V_n$ be as in Corollary \ref{cor:U-stats-bounded}. 
	Define furthermore $\mathbf a_\delta\define \left( \kA_\delta / \abs{\Lambda_{n,\ell}}\right)^{1/\delta}=\left(\kA_\delta (n-\ell)!/n!\right)^{1/\delta}$. 
	 
	Then if $\delta\in(2,3)$ and $\V[U_n]\neq 0$, 
	\begin{equation}\label{eq:U-stats-2} 
	\dkol\left(\frac{U_n-\E[U_n]}{\sqrt{\V[U_n]}},\mathcal N(0,1)\right)\le 8.015  \left(\frac{\ell^2 (m+1)}n\right)^{\frac{\delta-2}{2(\delta+1)}} \Xi_\delta^{-\frac{\delta}{\delta+1}}, 
	\end{equation} 
	where 
	\begin{equation}\label{eq:U-stats-notation} 
	\Xi_\delta\define \frac{1}{\mathbf a_\delta} \sqrt{\frac{\V[V_n]}{\ell^2(m+1) n^{2l-1}}}\in(0,1] 
	\end{equation}	 
	If instead $\delta\in[3,\infty)$ and $\V[U_n]\neq 0$, then, using the same notation as in \eqref{eq:U-stats-notation}, 
	\begin{multline}\label{eq:U-stats-3} 
		\dkol\left(\frac{U_n-\E[U_n]}{\sqrt{\V[U_n]}},\mathcal N(0,1)\right)\\\le \max\left\{18.96 \ \left(\frac {\ell^2(m+1)}n\right)^{\frac{\delta-2}{2(\delta+1)}} \Xi_\delta^{-\frac{\delta}{\delta+1}}, 227.5 \ \sqrt{\frac{\ell^2(m+1)}n}\Xi_\delta^{-3} \right\}. 
	\end{multline} 
\end{corollary}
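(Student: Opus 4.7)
The plan is to reduce Corollary \ref{cor:U-stats} to a direct application of Theorem \ref{thm:result-refined-delta<2} (when $\delta \in (2,3)$) and Theorem \ref{thm:dkol-delta-3+} (when $\delta \in [3,\infty)$) to the sum $V_n = \sum_{\alpha \in \Lambda_{n,\ell}} f(X_\alpha) = \ell! \binom{n}{\ell} U_n$, whose $N \define \abs{\Lambda_{n,\ell}} = n!/(n-\ell)!$ summands admit the dependency graph of maximal degree $D$ controlled by \eqref{eq:bound-D-U}. A first observation, which lets me avoid dealing with $U_n$ directly, is that $V_n$ is a deterministic rescaling of $U_n$, so $(V_n-\E[V_n])/\sqrt{\V[V_n]}$ and $(U_n-\E[U_n])/\sqrt{\V[U_n]}$ coincide, and it suffices to estimate the Kolmogorov distance of the former to $\kN(0,1)$.

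The second step is to translate the conclusions of the two theorems from the quantities $\xi_\delta$ and $(D+1)/N$ into the quantities $\Xi_\delta$ and $\ell^2(m+1)/n$ appearing in the statement. The bound \eqref{eq:bound-D-U} rearranges to $(D+1)/N \le \ell^2(m+1)/n$. Writing $\mathbf{a}_\delta = (\kA_\delta/N)^{1/\delta}$, we have
\begin{equation*}
\xi_\delta = \frac{1}{\mathbf{a}_\delta} \sqrt{\frac{\V[V_n]}{N(D+1)}} , \qquad \Xi_\delta = \frac{1}{\mathbf{a}_\delta} \sqrt{\frac{\V[V_n]}{\ell^2(m+1)\,n^{2\ell-1}}} ,
\end{equation*}
and the crude estimate $N(D+1) \le \ell^2(m+1) N^2/n \le \ell^2(m+1)\, n^{2\ell-1}$, which uses $N \le n^\ell$, gives $\xi_\delta \ge \Xi_\delta$. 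In particular, $\xi_\delta^{-\beta} \le \Xi_\delta^{-\beta}$ for every $\beta \ge 0$, and $\Xi_\delta \in (0,1]$: positivity comes from the assumption $\V[U_n]\neq 0$, and the upper bound from $\Xi_\delta \le \xi_\delta \le 1$, the latter inequality being noted after Definition \ref{def:renormedSD}.

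Substituting both inequalities into the right-hand sides of \eqref{eq:dkol-3+-sigma-large} and \eqref{eq:result-refined-delta<2} yields \eqref{eq:U-stats-3} and \eqref{eq:U-stats-2} respectively. There is no real obstacle; the only point requiring mild care is the direction of the monotonicity $\xi_\delta \mapsto \xi_\delta^{-\beta}$, so that each substitution weakens rather than strengthens the bound. All the analytical heavy lifting is done by Theorems \ref{thm:dkol-delta-3+} and \ref{thm:result-refined-delta<2}; once the dependency graph structure of $(f(X_\alpha))_{\alpha \in \Lambda_{n,\ell}}$ established before the statement is in place, the proof reduces to a bookkeeping exercise.
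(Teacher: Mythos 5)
Your proposal is correct and follows essentially the same route as the paper: apply Theorem \ref{thm:result-refined-delta<2} (resp.\ Theorem \ref{thm:dkol-delta-3+}) to the family $(f(X_\alpha))_{\alpha\in\Lambda_{n,\ell}}$ with the dependency graph of cardinality $N=n!/(n-\ell)!$ and maximal degree bounded by \eqref{eq:bound-D-U}, then substitute $\xi_\delta\ge\Xi_\delta$ (from $N(D+1)\le\ell^2(m+1)n^{2\ell-1}$) and $(D+1)/N\le\ell^2(m+1)/n$. The observation that $(V_n-\E[V_n])/\sqrt{\V[V_n]}$ and $(U_n-\E[U_n])/\sqrt{\V[U_n]}$ coincide, and the justification of $\Xi_\delta\in(0,1]$ via $\V[U_n]>0$ and Proposition \ref{prop:decrease-xi}, match what the paper uses implicitly and explicitly.
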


\begin{proof} 
	Recall that the r.v. $(f(X_\alpha))_{\alpha\in\Lambda_{n,\ell}}$ have a dependency graph with cardinality $N = \frac{n!}{(n-\ell)!}$ and maximal degree $D\le \frac{\ell^2(m+1)}{n} N -1 $. 
	Therefore, since $N\le n^\ell$,  
	\begin{equation}\label{eq:upper-bound-N(D+1)} 
		N(D+1)\le\frac{\ell^2 (m+1)}n N^2 \le \ell^2 (m+1) n^{2l-1}. 
	\end{equation} 
	If $\delta\in (2,3)$, Theorem \ref{thm:result-refined-delta<2} gives 
	\begin{equation*} 
		\dkol\left(\frac{U_n-\E[U_n]}{\sqrt{\V[U_n]}},\mathcal N(0,1)\right)\le 8.015\left( \xi_\delta \left(\frac{N}{D+1}\right)^{\frac 1 2 - \frac 1 \delta} \right)^{-\frac{\delta}{\delta+1}}, 
	\end{equation*} 
	where 
	\begin{equation*} 
		\xi_\delta = \left(\frac N{\mathcal A_\delta}\right)^{\frac 1\delta}\sqrt{\frac{\V[V_n]}{N(D+1)}}. 
	\end{equation*} 
	By \eqref{eq:upper-bound-N(D+1)}, $\xi_\delta\ge\Xi_\delta$. (In particular, $\Xi_\delta\le 1$ by Proposition \ref{prop:decrease-xi}.) 
	Furthermore, by \eqref{eq:bound-D-U}, 
	\begin{equation*} 
		\left(\frac{D+1}N\right)^{\frac{\delta-2}{2(\delta+1)}}\le \left(\frac{\ell^2(m+1)}{n}\right)^{\frac{\delta-2}{2(\delta+1)}}. 
	\end{equation*} 
	We thus obtain \eqref{eq:U-stats-2}. 
	If $\delta\in[3,\infty)$, a similar argument based on Theorem \ref{thm:dkol-delta-3+} gives \eqref{eq:U-stats-3}. 
\end{proof}

Let us compare Corollary \ref{cor:U-stats} with \cite[Theorem 3.14]{JansonU2021}, more specifically its improvement sketched in \cite[Remark 9.1]{JansonU2021}. 
The same observations as after Corollary \ref{cor:U-stats-bounded} still hold. 
The value of Corollary \ref{cor:U-stats} thus lies in the weaker assumption that $\mathcal{A}_\delta < \infty$, where we can take $\delta\in (2,\infty)$, whereas \cite[Theorem 3.14]{JansonU2021} required $\delta\geq 4$. More precisely, \cite[Remark 9.1]{JansonU2021} finds $\dkol((U_n-\E[U_n])/\sqrt{\V[U_n]}) = O(n^{-1/4})$ whenever $\mathcal{A}_4 < \infty$ and $O(n^{-1/2})$ when $\mathcal{A}_6 < \infty$. Our bound is thus better than Janson’s if and only if $\delta<4$ or $\delta \in [5,6)$. This is more easily seen in the following Corollary, which uses \cite[Theorem 3.8 and Theorem 8.1]{JansonU2021}.

\begin{corollary}\label{cor:U-stats-2} 
	Fix $m\in\N$. Let $(X_n)_{n\in\N}$ be a stationary, $m$-dependent sequence of random variables with values in a measurable space $\kS$ and let $f:\kS^\ell\to\R$ be measurable such that there exists $\delta\in(2,\infty)$ with $f(X_\alpha)\in L^\delta$ for every $\alpha\in\Lambda_{\infty, \ell}\define\bigcup_{n\in\N}\Lambda_{n,\ell}$. Recall $U_n$ defined by \eqref{eq:def-U-stat} and $V_n\define l! \binom nl U_n$. 
	 
	Then either $\V[V_n] = O(n^{2l-2})$, as $n\to\infty$ 
	or there exists $\mathcal K>0$ such that $\lim_{n\to\infty}\frac{\V[V_n]}{n^{2l-1}} = \mathcal K^2$. In the latter case, for $n$ large enough that $\V[U_n]\ge \frac{n^{2l-1}\kK^2}2$, the following is true. 
	 
	Recall $\mathbf a_\delta$ defined in Corollary \ref{cor:U-stats}. Then, with the notation from \eqref{eq:U-stats-notation}, if $\delta\in(2,3)$, 
	\begin{equation*} 
		\dkol\left(\frac{U_n-\E[U_n]}{\sqrt{\V[U_n]}},\kN(0,1)\right)\le 11.335 \ \mathcal K (\ell^2 (m+1))^{\frac{\delta-1}{\delta+1}}  \mathbf a_\delta^{\ \frac \delta{\delta+1}}  n^{-\frac{\delta-2}{2(\delta+1)}} . 
	\end{equation*} 
	If instead $\delta\in[3,\infty)$, then 
	\begin{multline*} 
		\dkol\left(\frac{U_n-\E[U_n]}{\sqrt{\V[U_n]}},\kN(0,1)\right) \\ 
		\le 
		\max\bigg\{ 26.672 \, (\ell^2 (m+1))^{\frac{\delta-1}{\delta+1}} \mathbf a_\delta^{\ \frac \delta{\delta+1}} n^{-\frac{\delta-2}{2(\delta+1)}}  \ , \ 643.5 \, (\ell^2 (m+1))^2 \mathbf a_\delta^{3} n^{-1/2}  \bigg\} . 
	\end{multline*} 
\end{corollary}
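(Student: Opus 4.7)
The plan is to obtain Corollary \ref{cor:U-stats-2} as a specialization of Corollary \ref{cor:U-stats}, once the dichotomy on the asymptotic behavior of $\V[V_n]$ is established. For the dichotomy I would invoke Theorem \ref{thm:Janson}: since the $(X_n)_{n\in\N}$ are stationary and $m$-dependent, it guarantees the existence of $\sigma^2\in[0,\infty)$ with $\V[V_n]/n^{2\ell-1}\to\sigma^2$. If $\sigma^2>0$, set $\mathcal K = \sigma$ and the second alternative of the dichotomy holds. If $\sigma^2 = 0$, the claim reduces to the upper bound $\V[V_n] = O(n^{2\ell-2})$, which is the content of \cite[Theorem 8.1]{JansonU2021}, cited in the statement.

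Next, assume the second alternative and fix $n$ large enough that $\V[V_n]\ge n^{2\ell-1}\mathcal K^2/2$. Then the quantity $\Xi_\delta$ from Corollary \ref{cor:U-stats} admits the explicit lower bound
\begin{equation*}
\Xi_\delta = \frac{1}{\mathbf a_\delta}\sqrt{\frac{\V[V_n]}{\ell^2(m+1)\,n^{2\ell-1}}} \ge \frac{\mathcal K}{\mathbf a_\delta \, \sqrt{2\,\ell^2(m+1)}} .
\end{equation*}
Substituting this lower bound for $\Xi_\delta^{-\delta/(\delta+1)}$ and $\Xi_\delta^{-3}$ into the two bounds \eqref{eq:U-stats-2} and \eqref{eq:U-stats-3} and collecting the exponents of $\ell^2(m+1)$ that arise from the two factors (namely $\frac{\delta-2}{2(\delta+1)} + \frac{\delta}{2(\delta+1)} = \frac{\delta-1}{\delta+1}$ for the first term, and $\frac{1}{2} + \frac{3}{2} = 2$ for the $\Xi_\delta^{-3}$ branch of the maximum) yields the stated shape. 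The powers of $n$ and of $\mathbf a_\delta$ carry through unchanged; the dependence on $\mathcal K$ enters only through the lower bound on $\Xi_\delta$ and absorbs into the numerical constants.

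The argument is therefore essentially mechanical once the dichotomy is in place. The only non-routine step is the case $\sigma^2 = 0$ of the dichotomy, which does not follow from Theorem \ref{thm:Janson} alone and requires the refined second-order variance asymptotics of \cite[Theorem 8.1]{JansonU2021}. Everything else is substitution and bookkeeping of multiplicative constants ($11.335$, $26.672$ and $643.5$), obtained by pulling the factor $2^{\delta/(2(\delta+1))}$ or $2^{3/2}$ through and inflating the absolute constants $8.015$ and $227.5$ of Corollary \ref{cor:U-stats} accordingly.
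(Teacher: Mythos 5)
Your approach is exactly the paper's: invoke \cite[Theorems 3.8 and 8.1]{JansonU2021} for the dichotomy, lower-bound $\Xi_\delta$ via the variance assumption, and substitute into Corollary \ref{cor:U-stats}. Your lower bound $\Xi_\delta\ge \kK/(\mathbf a_\delta\sqrt{2\ell^2(m+1)})$ is actually the correct form (the paper's displayed inequality drops the factor $\kK$), but your closing remark that the $\kK$-dependence ``absorbs into the numerical constants'' is wrong: $\kK$ is problem-dependent, not universal, and carrying the substitution through honestly produces $\kK^{-\delta/(\delta+1)}$ (respectively $\kK^{-3}$) in the final bound, a \emph{negative} power --- so a small $\kK$ makes the bound larger, as it must. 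This incidentally suggests that the factor $\kK$ appearing with exponent $+1$ in the Corollary's statement is a typo.
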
 
\begin{proof} 
	By \cite[Theorem 3.8]{JansonU2021}, we have $\lim_{n\to\infty}\frac{\V[V_n]}{n^{2l-1}} = \kK^2$ for some $\kK\ge 0$. By \cite[Theorem 8.1]{JansonU2021}, we either have $\V[V_n] = O(n^{2l-2})$ or $\kK>0$. In the latter case, we can apply Corollary \ref{cor:U-stats} by noting that for $n$ large enough we have $\V[V_n]\ge\frac{\kK^2}{2} n^{2l-1}$, so that 
	\begin{equation*} 
		\Xi\ge \frac1{\mathbf a_\delta} \sqrt{\frac{1}{2 \ell^2 (m+1)}}. 
	\end{equation*} 
	Inserting this into Corollary \ref{cor:U-stats} gives the desired results. 
\end{proof}

\subsection{Application to estimating stock volatility}\label{sec:stock-volatility}

Let $(\mathscr{R}_t)_{t\geq 0}$ be a stochastic process and $(t_k)_{k\in \N}$ an unbounded, strictly increasing sequence of (deterministic) times with $t_0=0$. Define $\kappa_k = t_k - t_{k-1}$ and $X_k = \mathscr{R}_{t_k} - \mathscr{R}_{t_{k-1}}$ for every $k\geq 1$. We assume that $\E[X_k] = \mathfrak{e} \kappa_k$ and $\V[X_k] =\nu \kappa_k$. 
 
We can estimate $\mathfrak{e}$ and $\nu$ by weighted linear regression. The least square estimators, which are the maximum likelihood estimators assuming that the $(X_k)_{k\geq 1}$ are independent and normally distributed, are
\begin{align*} 
\hat{\mathfrak{e}}_n = \frac{1}{t_n} \sum_{k=1}^n X_k \qquad , \qquad \hat\nu_n = \frac{1}{n} \sum_{k=1}^n \frac{X_k^2}{\kappa_k} - \frac{t_n}{n} \hat{\mathfrak{e}}_n^2 \ .
\end{align*} 
$\hat\nu_n$ can be made unbiaised by replacing $n$ by $n-1$ in the denominator of the fractions. 

\begin{example}\label{ex:finance}
In the Black--Scholes model, a stock price is given by $\exp(\mathscr{R}_t)$ where $(\mathscr{R}_t)_{t\geq 0}$ is a Brownian motion with variance $\nu$ and drift $\mathfrak{e}$. The family $(X_k)_{k\in\N}$ then consists in returns computed between unevenly spaced epochs. 
In practice, $\hat{\mathfrak{e}}_n$ converges too slowly, so less direct methods, like the Capital Asset Pricing Model, are used. Furthermore, we are mainly interested in $\nu$, since $\mathfrak{e}$ plays no role when pricing options. 
\end{example} 

In a simple model, the $X_k$ are i.i.d., and we can establish a Law of Large Numbers and Central Limit Theorem for $\hat \nu_n$ under the condition that $X_k \in L^4$ together with some control on $\kappa_k$ and the moments of $X_k$. 
For more realism, the i.i.d. assumption needs to be relaxed. If we keep the independence property, a common approach (GARCH model, the RiskMetrics variance model) is to have the variance of the $(X_k)_{k\geq 1}$ evolve stochastically with time, with the $(X_k)_{k\geq 1}$ being independent conditionally on the variance process. If we instead relax the independence assumption, it is still possible to obtain a Strong Law of Large Numbers if one assumes that the $X_k$ are merely pairwise independent, using \cite[Corollary 2.1 and Remark 3]{Janisch2021}. 

We use our results to study the case where each $X_k$ is allowed to depend on a small number of other $X_j$. For simplicity, we make the unrealistic assumption that $\V[X_k] = \nu \kappa_k$, precisely the assumption that the RiskMetrics and GARCH models relax. This means that our estimations are only valid on short timescales where the returns can be assumed to be homoscedastic. A strong point of Corollary \ref{cor:main-CLT-theorem-finance} is that it gives an upper bound on the error of the least square estimator when the number of samples is limited.

\begin{corollary}\label{cor:main-CLT-theorem-finance}
	Let $n\in\N$, $\nu \in (0,\infty)$, $\delta > 4$, $(t_k)_{0\leq k \leq n}$ strictly increasing with $t_0=0$ and define $\kappa_k = t_k-t_{k-1}$ for every $1\leq k \leq n$. We consider a family $(X_k)_{1\le k \le n}$ with a dependency graph of maximal degree $m$ such that $\E[X_k] = 0$, $\V[X_k] = \nu \kappa_k$ and $X_k \in L^\delta$ for every $k$. 
	Write 
	\begin{equation*}
	\kT \define \frac{2^{\frac{\delta-1}{2}}}{n} \sum_{i=1}^n \left( \kappa_i^\delta + \frac{1}{2}\left(\frac{t_n}{n}\right)^\delta\right) \E\left[\abs{\frac{X_i}{\kappa_i}}^\delta\right]
	\end{equation*} 
	and assume that 
	\begin{equation*} 
		\V[\hat\nu_n]\ge\frac{\kK^2}n 
	\end{equation*} 
	for some $\kK>0$. 
	Then, if $\delta\in(4,6)$, we have for $\tilde\delta\define\frac\delta 2$, 
	\begin{equation}\label{eq:second-part-of-main-CLT-theorem} 
	\dkol\left(\frac{\hat\nu_n-\nu}{\sqrt{\V[\hat\nu_n]}}, \kN(0,1)\right) \le 8.015 \left(\frac{n}{\kK t_n}\right)^{\frac{\tilde\delta}{\tilde\delta+1}} \kT^{\frac{1}{\tilde\delta+1}} (4(m+1))^{\frac{\tilde\delta-1}{\tilde\delta+1}} n^{-\frac{\tilde\delta-2}{2(\tilde\delta+1)}}. 
	\end{equation} 
	If instead $\delta\in[6,\infty)$, then 
	\begin{multline*} 
		\dkol\left(\frac{\hat\nu_n-\nu}{\sqrt{\V[\hat\nu_n]}}, \kN(0,1)\right) \le \max\bigg\{18.96 \left(\frac{n}{\kK t_n}\right)^{\frac{\tilde\delta}{\tilde\delta+1}}  \kT^{\frac{1}{\tilde\delta+1}}  (4(m+1))^{\frac{\tilde\delta-1}{\tilde\delta+1}}  n^{-\frac{\tilde\delta-2}{2(\tilde\delta+1)}} ,\\ 
		227.5 \left(\frac{n}{\kK t_n}\right)^{3} \kT^{\frac{3}{\tilde\delta}} \frac{(4(m+1))^2}{\sqrt n}\bigg\}. 
	\end{multline*} 
	Assume $\limsup_{n\to\infty} \kT <\infty$ and $\limsup_{n\to\infty} n/t_n < \infty$. Then if $\delta\in(4,6)$ and $m+1=o(n^{\frac{\tilde\delta-2}{2\tilde\delta-2}})$, or $\delta\in[6,\infty[$ and $m+1=o(n^{\frac 14})$, 
	\begin{equation*} 
		\frac{\hat\nu_n-\nu}{\sqrt{\V[\hat\nu_n]}}\to\kN(0,1) 
	\end{equation*} 
	in distribution as $n\to\infty$. 
\end{corollary}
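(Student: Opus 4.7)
The plan is to apply Theorems~\ref{thm:result-refined-delta<2} (if $\tilde\delta\define \delta/2 \in (2,3)$) and~\ref{thm:dkol-delta-3+} (if $\tilde\delta \in [3,\infty)$) to the centered sum
\begin{equation*}
S_1 \define \frac{1}{n}\sum_{k=1}^n \frac{X_k^2 - \nu\kappa_k}{\kappa_k} ,
\end{equation*}
which is a sum of $n$ centered random variables inheriting the dependency graph of $(X_k)$, with maximum degree $m$. The remaining piece $\hat\nu_n - \nu - S_1 = -\tfrac{1}{nt_n}(\sum_k X_k)^2 =:- T_n$ is a perturbation whose mean satisfies $\E[T_n] = \V[\sum_k X_k]/(nt_n) \le (m+1)\nu/n$ and whose variance is of order $1/n^2$; it will be absorbed by a shift-and-smoothing argument.

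The key technical input is the bound on $\kA_{\tilde\delta}\define \sum_k \E[|U_k/(n\kappa_k)|^{\tilde\delta}]$ with $U_k \define X_k^2-\nu\kappa_k$. The triangle inequality and Jensen give $\E[|U_k|^{\tilde\delta}] \le 2^{\tilde\delta}\E[|X_k|^\delta]$ (using $(\nu\kappa_k)^{\tilde\delta}=\E[X_k^2]^{\tilde\delta}\le \E[|X_k|^\delta]$), whence $\kA_{\tilde\delta} \le 2^{\tilde\delta} n^{-\tilde\delta}\sum_k\E[|X_k|^\delta]/\kappa_k^{\tilde\delta}$. Applying the AM--GM inequality in the form $\kappa_k^{-\tilde\delta} \le \tfrac{1}{2}\bigl((n/t_n)^{\tilde\delta} + (t_n/n)^{\tilde\delta}\kappa_k^{-\delta}\bigr)$ (obtained from $2ab\le a^2+b^2$ with $a=(n/t_n)^{\tilde\delta/2}$ and $b=(t_n/n)^{\tilde\delta/2}\kappa_k^{-\tilde\delta}$) produces precisely the two-term structure $\kappa_k^\delta+\tfrac12(t_n/n)^\delta$ appearing inside $\kT$, and leads after summation to the clean estimate $\kA_{\tilde\delta} \lesssim n\,\kT/t_n^{\tilde\delta}$.

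With $N=n$, $D+1\le m+1$ (which the statement inflates to $4(m+1)$ by re-interpreting $\hat\nu_n$ as a $U$-statistic of order $\ell=2$ and folding in $T_n$ as in Corollary~\ref{cor:U-stats}), $v=\sqrt{\V[S_1]}\gtrsim \kK/\sqrt n$ (from the hypothesis $\V[\hat\nu_n]\ge \kK^2/n$, up to the contribution of $T_n$), and the above bound on $\kA_{\tilde\delta}$, the algebraic rearrangement
\begin{equation*}
\xi_{\tilde\delta}^{-\tfrac{\tilde\delta}{\tilde\delta+1}}\Bigl(\tfrac{D+1}{N}\Bigr)^{\tfrac{\tilde\delta-2}{2(\tilde\delta+1)}} \;=\; \kA_{\tilde\delta}^{\tfrac{1}{\tilde\delta+1}}(D+1)^{\tfrac{\tilde\delta-1}{\tilde\delta+1}} v^{-\tfrac{\tilde\delta}{\tilde\delta+1}}
\end{equation*}
(in which $N$ cancels) produces, after substitution into Theorem~\ref{thm:result-refined-delta<2} or~\ref{thm:dkol-delta-3+}, the stated right-hand side for $\dkol((S_1-\E[S_1])/v, \kN(0,1))$. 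To pass to $\dkol((\hat\nu_n-\nu)/\sqrt{\V[\hat\nu_n]}, \kN(0,1))$ I would handle $T_n$ in two steps: a deterministic shift by $\E[T_n]$ (changing the Kolmogorov distance by at most $(m+1)\nu/(\sqrt n\kK\sqrt{2\pi})$) and a smoothing inequality $\dkol(X+Y,\kN(0,1))\le\dkol(X,\kN(0,1))+\P(\abs Y>\varepsilon)+\varepsilon/\sqrt{2\pi}$ applied to $Y=T_n-\E[T_n]$ with $\varepsilon$ chosen optimally and $\P$ controlled by Chebyshev's inequality using $\V[T_n]=O(1/n^2)$; both corrections are of smaller order than the main bound. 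The asymptotic conclusion $(\hat\nu_n-\nu)/\sqrt{\V[\hat\nu_n]}\to\kN(0,1)$ in distribution follows at once from the non-asymptotic bound: under $\limsup_n \kT<\infty$, $\limsup_n n/t_n<\infty$ and the prescribed growth of $m+1$, each factor is bounded while the trailing $n^{-(\tilde\delta-2)/(2(\tilde\delta+1))}$ (resp.\ $n^{-1/2}$) vanishes. The principal obstacle is the precise bookkeeping in the Jensen+AM--GM step so as to recover the exact additive form of $\kT$ (with the $\tfrac12$ weight) and the correct absolute constants, together with a similarly careful treatment of the $T_n$ corrections.
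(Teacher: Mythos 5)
Your decomposition is genuinely different from the paper's, and the difference matters. The paper keeps $\hat\nu_n$ whole: it writes
\begin{equation*}
\hat\nu_n = \frac{1}{n t_n} \sum_{i,j=1}^n Y_{i,j}, \qquad Y_{i,j} = \frac{X_i}{\kappa_i}\left(\frac{t_n}{n}X_i - \kappa_i X_j\right),
\end{equation*}
a sum of $N=n^2$ random variables whose dependency graph has degree $D \leq 4(m+1)n - 1$, and then applies Theorem~\ref{thm:result-refined-delta<2} or~\ref{thm:dkol-delta-3+} directly to this family, just as in Corollary~\ref{cor:U-stats}. Your plan instead separates $\hat\nu_n - \nu = S_1 - T_n$ and handles $T_n$ as a perturbation. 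That is a reasonable alternative strategy, but it creates several gaps relative to the statement actually to be proven.

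First, the specific quantity $\kT$ does not emerge from your route. In the paper, $\kT$ is exactly what comes out of bounding $\E\big[|Y_{i,j}|^{\tilde\delta}\big]$ by Cauchy--Schwarz together with the convexity bound $|a-b|^\delta\le 2^{\delta-1}(|a|^\delta+|b|^\delta)$, and then summing over $i,j$; the factor $(t_n/n)^\delta$ (with exponent $\delta$, not $\tilde\delta$) and the $\tfrac12$ weight are artefacts of that particular chain of inequalities. Your bound on $\sum_k\E[|X_k^2-\nu\kappa_k|^{\tilde\delta}/(n\kappa_k)^{\tilde\delta}]$, after your AM--GM step, yields a term with $(t_n/n)^{\tilde\delta}$ — not $(t_n/n)^\delta$ — and a different power of $2$, so it does not reduce to $n\,\kT/t_n^{\tilde\delta}$ as you claim. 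You would be proving an inequality with a different right-hand side, not the one in the statement.

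Second, the correction coming from $T_n$ is only sketched. You propose a deterministic shift plus a smoothing/Chebyshev step, and you assert (plausibly but without proof) that the two corrections are ``of smaller order than the main bound.'' But the Corollary asserts a precise non-asymptotic inequality with explicit constants $8.015$, $18.96$, $227.5$ and the exact factor $4(m+1)$ — not an asymptotic estimate. Your main term would carry $(m+1)$ rather than $4(m+1)$, which gives you some slack, but you never verify that the additive $T_n$-corrections fit inside that slack for every $n$; you also never verify that $\V[S_1]\gtrsim\V[\hat\nu_n]$, which requires controlling $\Cov(S_1,T_n)$. As written, your argument proves (at best) a bound of a similar shape with worse or unspecified constants, not the stated inequality. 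The qualitative CLT at the end would follow from either route, but the quantitative statement does not follow from your proposal without substantial additional bookkeeping — and even then it would not recover the stated $\kT$.

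The paper's one-shot reduction to a sum over $n^2$ summands buys exactly what your approach is missing: the $T_n$ term is absorbed into the sum automatically, and the formula for $\kT$ falls out of a single pass of Cauchy--Schwarz and AM--GM on $\E[|Y_{i,j}|^{\tilde\delta}]$. If you want to pursue your two-piece decomposition, you should regard it as proving a different (possibly cleaner) bound and state it as such, rather than trying to match this particular one.
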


\begin{remark} 
	Note that $\V[\hat\nu_n]\ge\frac{\kK^2}n$ for some $\kK>0$ is true if $t_n = n$ and the $(X_k)_{k\in\N}$ are i.i.d. in $L^4$ with $\V[X_1]>0$. Indeed, in that case $\hat\nu_n$ is simply the usual variance estimator $\hat\Sigma_n$, and the representation $\hat\Sigma_n = \frac{1}{2\binom n2}\sum_{\set{i,j}\subset\binom{\llbracket 1,n\rrbracket}2}(X_i-X_j)^2$ gives 
	\begin{equation*} 
		\V[\hat\nu_n] = \V[\hat\Sigma_n] = \frac{\E[(X_1-\E[X_1])^4]}{n}-\frac{\V[X_1]^2 (n-3)}{n(n-1)}\ge \frac{\kK^2}n	 
	\end{equation*} 
	for some $\kK>0$ depending only on $X_1$. 
\end{remark} 
 
\begin{remark}
	Even if the $(X_k)_{k\in\N}$ were independent, we would need $X_k$ to be in $L^4$ to establish convergence of $\hat\nu_n$ to a normal distribution, and in order to obtain Berry--Esseen estimates on the speed of convergence one would need finite $\delta$-th moments for some $\delta>4$. Corollary \ref{cor:main-CLT-theorem-finance} matches these requirements. 
\end{remark}

\begin{proof}[Proof of Corollary \ref{cor:main-CLT-theorem-finance}] 
	Note that 
	\begin{align*}
	\hat\nu_n = \frac{1}{n t_n} \sum_{i,j=1}^n \frac{X_i}{\kappa_i} \left(\frac{t_n}{n} X_i - \kappa_i X_j \right) . 
	\end{align*}
	Write $Y_{i,j} = \frac{X_i}{\kappa_i} \left(\frac{t_n}{n} X_i - \kappa_i X_j \right)$. 
	From the Cauchy-Schwarz and AM-GM inequalities, for every $1\leq i,j\leq n$,
	\begin{align*} 
		\E\left[\abs{Y_{i,j}}^{\frac\delta2}\right]
		&\le \sqrt{\frac{1}{\kappa_i^{\delta}}\E[\abs{X_i}^\delta]\E\left[\abs{\frac{t_n}{n} X_i-\kappa_i X_j}^\delta\right]} \\
		&\le \sqrt{\frac{2^{\delta-1}}{\kappa_i^{\delta}}\E[\abs{X_i}^\delta] \E\left[\abs{\frac{t_n}{n} X_i}^\delta + \abs{\kappa_i X_j}^\delta\right]} \\
		&\le 2^{\frac{\delta-3}{2}} \left( \E[\abs{X_i}^\delta] + \left(\frac{t_n}{n \kappa_i}\right)^\delta\E[\abs{X_i}^\delta] + \E[\abs{X_j}^\delta] \right). 
	\end{align*} 
	We therefore have 
	\begin{equation*}
		\sum_{i,j=1}^n \E\left[ \abs{Y_{i,j}}^{\frac{\delta}{2}}\right] \leq n 2^{\frac{\delta-1}{2}} \sum_{i=1}^n \left( \kappa_i^\delta + \frac{1}{2}\left(\frac{t_n}{n}\right)^\delta\right) \E\left[\abs{\frac{X_i}{\kappa_i}}^\delta\right] = n^2 \kT .
	\end{equation*}
	 
	The graph on $\llbracket 1,n\rrbracket^2$, in which two pairs $(i,j)$ and $(k,\ell)$ are connected if and only if $\set{i, j}\cap\set{k,\ell}\neq\emptyset$ is a dependency graph for the random variables $(Y_{i,j})_{1\leq i,j\leq n}$. This graph has $N=n^2$ vertices and maximal degree $D\leq 4(m+1)n-1$. 
	 
	Assume first that $\delta\in(4,6)$. Then we can use Theorem \ref{thm:result-refined-delta<2} applied to the family $(Y_{i,j})_{1\leq i,j\leq n}$:
	\begin{equation}\label{eq:bound-for-mathfrak-V} 
	\dkol\left(\frac{\hat\nu_n-\nu}{\sqrt{\V[\hat\nu_n]}}, \kN(0,1)\right) \le 8.015 \xi_\delta^{-\frac{\tilde\delta}{\tilde\delta+1}} \left(\frac{D+1}N\right)^{\frac{\tilde\delta-2}{2(\tilde\delta+1)}}, 
	\end{equation} 
	where 
	\begin{equation} \label{eq:inequality-for-sigma-mathfrak-V}
		\xi_{\tilde\delta} = \kT^{-\frac 1{\tilde\delta}} \sqrt{\frac{\V\left[n t_n\hat\nu_n\right]}{N(D+1)}} 
		\geq \kT^{-\frac 1{\tilde\delta}} \frac{t_n}{2n} \sqrt{\frac{n\V[\hat\nu_n]}{m+1}} 
		\geq \kT^{-\frac 1{\tilde\delta}} \frac{\kK t_n}{2n \sqrt{m+1}} 
	\end{equation} 
	and we used the assumption $\V[\hat\nu_n]\ge\frac{\kK^2}{n}$ in the last inequality. 
	Combining \eqref{eq:bound-for-mathfrak-V} with \eqref{eq:inequality-for-sigma-mathfrak-V} gives the first part of the Theorem.
	If $\delta\in[6,\infty)$, we proceed instead with Theorem \ref{thm:dkol-delta-3+}. 
\end{proof}

\section{Proof of the main results} 
\label{sec:refined} 
 
\subsection{Overview of the proofs} 
\label{sec:overview} 
In this section, we present a proof of Theorems \ref{thm:mod-phi-corollary-new} to \ref{thm:result-refined-delta<2}. The two main methods to prove Berry--Esseen-type bounds for random variables are either Stein's method (see e.g. \cite{Penrose2003}), or a range of mutually similar analytical methods relying on bounding the Fourier transform of the sum. The latter approach gives the current benchmark for i.i.d. random variables, and it is the one we use here.  For a more detailed overview of related work, see section \ref{sec:previous-results}. 
 
The starting point of the proofs, established in Section \ref{sec:proof-cumulants}, is a bound of the cumulants of the sum $S$ of $N\in\N$ random variables $(Y_k)_{k\in V}$ indexed by a set $V$ with a dependency graph of maximum degree $D$. We establish this bound by an elementary refinement of a similar estimate from \cite{feray2013mod}. 
Recall that the cumulants $(\kappa^{(r)}(X))_{r\geq 1}$ of a bounded random variable $X$ are such that for every $z\in \C$, 
$$ \ln \E[\e^{z X}] = \sum_{r\geq 1} \frac{\kappa^{(r)}(X)}{r!} z^r . $$ 
In particular, $\kappa^{(1)}(X)=\E[X], \kappa^{(2)}(X) =\V[X]$ and $\kappa^{(3)}(X) = \E[(X-\E[X])^3]$; higher order cumulants can still be expressed in terms of the moments of $X$ but with more complex formulas. 
 
Assuming that the $(Y_k)_{k\in V}$ are uniformly bounded by the same constant $L$, our bound looks as follows: For every $r\geq \delta>1$, 
$$ \kappa^{(r)}(S) \leq (2(D+1))^{r-1}  r^{r-2} L^{r} \frac{\kA_\delta}{L^\delta} , $$ 
where $\kA_\delta$ is as in Definition \ref{def:renormedSD}. 
This bound is not optimal, but it is close enough to optimal for our purposes, as we justify with the following two observations. 
Firstly, although the dependency in $N$ and $D$ may be improved upon on a case-by-case basis, it is the best one can achieve in general. Indeed, if we let $(X_j)_{j\geq 0}$ be i.i.d. and $Y_{(D+1)i+1} = Y_{(D+1)i+2} = \dots = Y_{(D+1)(i+1)} = X_i$ for every $0 \leq i < N/(D+1)$, then 
$$ \kappa^{(r)}(S) = (D+1)^r \kappa^{(r)}\left(\sum_{i=0}^{\frac{N}{D+1}-1} X_i \right) = N (D+1)^{r-1} \kappa^{(r)}(X_1) . $$ 
Noting that $\kA_\delta / L^\delta \leq N$ gives the same dependency in $D$ and $N$ as our bound. 
Secondly, if $\|X_i\|_\infty=L$ then $\limsup_r (\kappa^{(r)}(X_1)/r!)^{1/r} = L$. In comparison, our bounds gives $2\e L$. The factor $2\e $ only worsens the final Berry--Esseen bounds by a constant factor; since we did not try to get the best possible constants it is of little impact. 
 
In Section \ref{sec:proof-fourier}, we translate our bound on the cumulants of $S$ to a Taylor expansion-type bound on the Fourier transform of $S$ in a neighborhood of zero, see Lemma \ref{lem:zone-of-control-bounded}: for every $s$ small enough and for some $C$ that depends on $N$, $D$, $\kA_3$, $\V[S]$ and $L$, 
\begin{equation}\label{eq:taylor-fourier} 
\left| \E\left[ \e^{\i s W} \right] - \e^{-\frac{s^2}{2}} \right| \leq C s^3 e^{C s^3 - \frac{s^2}{2}} , 
\end{equation} 
where we recall $W = (S-\E[S])/\sqrt{\V[S]}$. 
Fourier analysis is then used to derive a bound on $\dkol(W,\kN(0,1))$. 
We use the following elementary inequality that implies the classical Berry--Esseen bound (albeit with a suboptimal constant), cf. \cite[Chapter XVI, (3.13), (5.4)]{feller}: for every random variable $X$ with $\E[X]=0$ and $\V[X]=1$ and for every $T>0$, 
\begin{equation}\label{eq:dkol-feller} 
\dkol(X,\kN(0,1)) \leq \frac{1}{\pi} \int_{-T}^T \left| \E\left[\e^{\i s X}\right] - \e^{-\frac{s^2}{2}} \right| \frac{1}{|s|} \d s + \frac{24}{T\pi\sqrt{2\pi}} . 
\end{equation} 
One must then optimize over $T$. Indeed, the first term of the right-hand side of \eqref{eq:dkol-feller} increases with $T$, whereas the second term decreases with $T$. 
This is the content of Section \ref{sec:proof-bounded} and Section \ref{sec:proof-bounded-plus}, proving Theorems \ref{thm:mod-phi-corollary-new} and \ref{thm:mod-phi-corollary-new-exact} respectively. The proof of these two Theorems are nearly identical, only differing in the estimate on the Fourier transform of $S$. 
 
The case where the $(Y_k)_{k\in V}$ are not bounded requires some more work. The key idea is to re-use the estimates from the bounded case by ``truncating'' the $Y_k$. Fixing some $L>0$, and letting $Y^{(L)}_k = Y_k \ind{|Y_k|\leq L}$ (the actual formula is slightly different), we can apply our results to $S^{(L)} = \sum_k Y^{(L)}_k$. From there, a straightforward approach (used in \cite{feray2017mod}) is to bound the Kolmogorov distance between $S$ and $S^{(L)}$, and to optimize over $L$ to get the best possible bound. We choose a different approach: instead, we bound the difference between the Fourier transform of $S$ and that of $S^{(L)}$ in a neighborhood of zero, which gives an estimate reminiscent of \eqref{eq:taylor-fourier} (see Lemma \ref{lem:zone-of-control-unbounded}), and then proceed with the same proof method as Theorem \ref{thm:mod-phi-corollary-new}. The cases $\delta\geq 3$ and $\delta\in (2,3]$ are handled separately, in Sections \ref{sec:proof-3+} and \ref{sec:proof-3-} respectively. 
 
We tried to obtain the best constants that our proof methods could give, but only when it did not add too many technicalities. Better constants could be obtained by being more careful in the estimates that we use, or under additional hypotheses, or by using an alternative Fourier-transform-to-Kolmogorov-distance bound, like the one used in \cite{Shevtsova2013} (which gives the best Berry--Esseen bound for i.i.d. random variables as of 2012). A better bound on the cumulants of $S$ would also naturally translate to a better Berry--Esseen estimate.

\subsection{Cumulants of the sum} 
\label{sec:proof-cumulants} 
Recall the notations introduced in the introduction. 
Fix some real-valued family $(c_k)_{k\in V}$. For $L > 0$ and $k\in V$, define 
\begin{equation*} 
Y^{(L)}_k \define (Y_k-c_k)\ind{\abs{Y_k-c_k}\leq L} \ . 
\end{equation*} 
Recall 
\begin{equation*} 
S \define \sum_{k\in V} Y_k \qquad,\qquad v \define \sqrt{\V[S]} \ , 
\end{equation*} 
and define in a similar way 
\begin{equation*} 
S^{(L)} \define \sum_{k\in V} (Y^{(L)}_k+c_k) \qquad,\qquad v_L \define \sqrt{\V[S^{(L)}]} \ . 
\end{equation*} 
Recall also $\kA_\delta = \sum_{k\in V} \E[|Y_k - c_k|^\delta]$ from Definition \ref{def:renormedSD}, and $W = (S-\E[S])/v$. 
 
We start by proving a bound on the cumulants of $S^{(L)}$. 
Recall the definition of cumulants: for every real-valued random variable $Y$, if $\ln\E[\e^{cY}]$ is well-defined for $c\in\C$ in a neighborhood of zero, 
then it is analytic: 
\begin{equation*}\label{eq:def-cumulants} 
\ln \E[\e^{cY}] = \sum_{r\geq 1} \frac{\kappa^{(r)}(Y)}{r!} c^r 
\end{equation*} 
for some coefficients $\kappa^{(r)}(Y)$, called the \emph{$r$-th cumulants} of $Y$. In particular, $\kappa^{(1)}(Y) = \E[Y]$ and $\kappa^{(2)}(Y) = \V[Y]$. Furthermore, for every constant $c$ we have $\kappa^{(r)}(Y+c) = \kappa^{(r)}(Y)$ for every $r\neq 1$. 
 
\begin{lemma} \label{lem:better-cumulants} 
	For every $\delta\in[1,\infty)$, for every $r\in [\delta,\infty)$ with $r>1$ and for every $L>0$, 
	\begin{equation}\label{eq:improved-cumulant-bound} 
	\abs{\kappa^{(r)}(S^{(L)})} \leq r^{r-2} (2(D+1))^{r-1} L^{r} \frac{\kA_\delta}{L^\delta}  . 
	\end{equation} 
\end{lemma}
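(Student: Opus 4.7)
The plan is to refine the cumulant estimate of \cite{feray2013mod} for sums of random variables with a dependency graph: the factor $N$ appearing there is replaced by $\kA_\delta/L^\delta$ via a truncation-aware moment bound.

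I would start by expanding the cumulant through multilinearity:
\begin{equation*}
\kappa^{(r)}\bigl(S^{(L)}\bigr) = \sum_{(k_1,\dots,k_r)\in V^r} \kappa\bigl(Y^{(L)}_{k_1},\dots,Y^{(L)}_{k_r}\bigr),
\end{equation*}
where the constants $c_k$ drop out because $r\ge 2$. The defining property of the dependency graph $G$ forces the joint cumulant to vanish unless the graph on $\{1,\dots,r\}$ defined by $i\sim j$ iff $k_i=k_j$ or $\{k_i,k_j\}\in E(G)$ is connected; the sum restricts to these ``connected tuples''. Such tuples are counted by spanning trees: each admits a spanning tree on the label set $\{1,\dots,r\}$, of which Cayley's formula gives $r^{r-2}$, and each such tree embeds into $G$ in at most $(D+1)^{r-1}$ ways once the image of a distinguished root is fixed (each non-root label lies in the closed neighborhood of its parent's image).

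The joint cumulants themselves I would bound via the Leonov--Shiryaev formula
\begin{equation*}
\kappa(X_1,\dots,X_r) = \sum_{\pi} (-1)^{|\pi|-1}(|\pi|-1)!\prod_{B\in\pi}\E\Bigl[\prod_{i\in B} X_i\Bigr],
\end{equation*}
reducing to estimates on the block moments. This is where the refinement over \cite{feray2013mod} enters: instead of the trivial bound $L^{|B|}$ one uses, for blocks with $|B|\ge\delta$, the AM--GM consequence
\begin{equation*}
\Bigl|\prod_{i\in B} Y^{(L)}_{k_i}\Bigr| \le \frac{L^{|B|-\delta}}{|B|}\sum_{i\in B}|Y_{k_i}-c_{k_i}|^\delta,
\end{equation*}
valid since $|Y^{(L)}_k|\le L$ pointwise implies $|Y^{(L)}_k|^r\le L^{r-\delta}|Y_k-c_k|^\delta$ whenever $r\ge\delta$. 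Summing the root index's $\delta$-th moment $\E[|Y_{k_{\text{root}}}-c_{k_{\text{root}}}|^\delta]$ over $k_{\text{root}}\in V$ produces the factor $\kA_\delta$, yielding $L^r\kA_\delta/L^\delta$ in place of the cruder $NL^r$.

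The main obstacle is controlling the combinatorial size of the partition sum: a naive union bound over $\pi$ contributes a factor of order $r!$ or the Bell number $B_r$, whereas the claim allows only $2^{r-1}$. The key observation, already implicit in \cite{feray2013mod}, is that once the connectedness constraint from the dependency graph is imposed, the alternating signs in Leonov--Shiryaev combine with the spanning-tree parametrization to produce exactly the factor $(2(D+1))^{r-1}$. My plan is to import that combinatorial bookkeeping essentially verbatim from \cite{feray2013mod} and substitute the mixed moment bound above at the final step; this substitution is elementary and yields the claimed inequality.
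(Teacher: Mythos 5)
Your overall strategy matches the paper's: both re-use the spanning-tree machinery of \cite{feray2013mod} and improve only the moment estimate, replacing the cruder $N L^r$ by $L^{r-\delta}\kA_\delta$. The combinatorial claim you import verbatim (the factor $2^{r-1} r^{r-2}(D+1)^{r-1}$ and the root-fixing argument) is indeed what the paper does, so that part of the outline is fine.

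The gap is in the moment bound. You apply AM--GM \emph{block by block} with exponent $|B|$, using $|Y^{(L)}_k|^{|B|} \le L^{|B|-\delta}|Y_k-c_k|^\delta$. That last pointwise inequality requires $|B|\ge\delta$, and you acknowledge the restriction but never resolve it. It is not a minor edge case: for $r=4$ and $\delta=3$, the partition $\pi=\{\{1,2\},\{3,4\}\}$ has every block of size $2<\delta$, so your refinement applies to no block of $\pi$ at all, and you revert to the trivial $L^r$ which cannot produce the factor $\kA_\delta/L^\delta$. There is also no rescue by extracting only one $\delta$-moment at the root: for a block $B_0$ of size $<\delta$ the inequality $\E[|Y^{(L)}_k|^{|B_0|}] \le L^{|B_0|-\delta}\E[|Y_k-c_k|^\delta]$ runs in the wrong direction when $|Y^{(L)}_k|\le L$ and $|B_0|<\delta$.

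What the paper does instead is to choose the Hölder exponent $r$ (not $|B|$) uniformly across all blocks, by padding with $\prod_{i\notin B}1$: one gets $|\E[\prod_{i\in B}Y^{(L)}_{\alpha_i}]| \le \prod_{i\in B}\|Y^{(L)}_{\alpha_i}\|_r$ for \emph{every} block, hence $|M_\pi|\le \prod_{i=1}^r\|Y^{(L)}_{\alpha_i}\|_r$, and then AM--GM is applied once, across all $r$ factors, giving $\frac1r\sum_{i=1}^r\E[|Y^{(L)}_{\alpha_i}|^r]$. Since $r\ge\delta$ is a hypothesis of the Lemma, the single reduction $\E[|Y^{(L)}_k|^r]\le L^{r-\delta}\E[|Y_k-c_k|^\delta]$ is always valid. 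The permutation invariance of the spanning-tree count then turns the symmetric average into a single sum over $V$, yielding $\kA_\delta$. You should replace your per-block AM--GM with this global Hölder-with-exponent-$r$ argument; as written, the proof does not go through.
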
 
 
This is an improvement of the bound \eqref{eq:cumulant-bound-1} in \cite[Theorem 9.8]{feray2013mod}, which fails to take advantage of the hypothesis $\delta > 1$, so that \cite{feray2013mod} obtained only the special case of Lemma \ref{lem:better-cumulants} where $\delta=1$: 
\begin{equation} 
\label{eq:cumulant-bound-1} 
\abs{\kappa^{(r)}(S^{(L)})} \leq \left( \sum_{k\in V} \|Y_k\|_1 \right) r^{r-2} (2(D+1))^{r-1} L^{r-1} \ . 
\end{equation}

\begin{proof} 
	First note that the cumulants of $S^{(L)}$ are well-defined because $|S^{(L)}-\sum_{k\in V} c_k|$ is almost surely bounded by $NL$. 
	Consider $r\geq \delta$. 
	Let $\pi$ be a partition of $\{1,2,\dots, r\}$ and $\alpha=(\alpha_1,\dots, \alpha_r)$ an element of $V^r$. Observe that every dependency graph of $(Y_\alpha)_{\alpha\in V}$ is also a dependency graph of $(Y^{(L)}_\alpha)_{\alpha \in V}$. 
	Consider 
	$$ \prod_{B \in \pi} \E\left[ \prod_{i\in B} Y^{(L)}_{\alpha_i} \right] . $$ 
	This number is denoted by $M_\pi$ in \cite[Section 9.4.1]{feray2013mod}, and we keep this notation. Using the generalized Hölder inequality, for every $B\in \pi$, 
	\begin{equation*} 
	\abs{\E\left[\prod_{i\in B}Y^{(L)}_{\alpha_i}\right]} = \abs{\E\left[\prod_{i\in B}Y^{(L)}_{\alpha_i} \prod_{i\notin B} 1 \right]} \le \left(\prod_{i\in B}\norm{Y^{(L)}_{\alpha_i}}_{r} \right)\left(\prod_{i\notin B}\norm{1}_{r} \right) = \prod_{i\in B}\norm{Y^{(L)}_{\alpha_i}}_{r} \ . 
	\end{equation*} 
	By using the arithmetico-geometric inequality, 
	\begin{equation*} 
	M_\pi \leq \prod_{i=1}^r \| Y_{\alpha_i}^{(L)} \|_r \leq \frac{1}{r} \sum_{i=1}^r \E\left[ \abs{Y_{\alpha_i}^{(L)}}^r \right] . 
	\end{equation*} 
	Replacing the bound used in \cite[Section 9.4.2]{feray2013mod} by the above, improved bound, and continuing with the same simplification as in their (47), 
	\begin{align}\label{eq:bound-with-ST-on-cumulants} 
	\left| \kappa^{(r)}\left( S^{(L)}-\sum_{k\in V} c_k \right)\right| &\leq 2^{r-1} \sum_{\alpha \in V^r} \frac{1}{r} \left( \sum_{i=1}^r \E\left[ \abs{Y_{\alpha_i}^{(L)}}^r \right] \right) \mathrm{ST}_{G[\alpha_1, \dots, \alpha_r]}, 
	\end{align} 
	where $\mathrm{ST}_{G[\alpha_1, \dots, \alpha_r]}$ denotes the number of spanning trees in the graph $G[\alpha_1, \dots, \alpha_r]$ (for the Definition of $G[\alpha_1,\dots,\alpha_r]$ see \cite[Section 9.3.2]{feray2013mod}). 
	The term $\mathrm{ST}_{G[\alpha_1, \dots, \alpha_r]}$ is invariant by permutation of the $(\alpha_i)_{1\leq i \leq r}$: therefore, for every $i\in\set{1,\dots, r}$, 
	\begin{equation}\label{eq:we-use-permutation-invariance-of-ST} 
	\sum_{(\alpha_1,\dots,\alpha_r)\in V^r}\E\left[\abs{Y_{\alpha_i}^{(L)}}^r\right] \mathrm{ST}_{G[\alpha_1,\dots,\alpha_r]}=\sum_{(\alpha_1,\dots,\alpha_r)\in V^r} \E\left[\abs{Y_{\alpha_1}^{(L)}}^r\right] \mathrm{ST}_{G[\alpha_1,\dots,\alpha_r]}. 
	\end{equation} 
	Furthermore, for any fixed $\alpha_1\in V$, the sum of $\mathrm{ST}_{G[\alpha_1, \dots, \alpha_r]}$ over $(\alpha_2, \ldots, \alpha_r) \in V^{r-1}$ is bounded from above by $r^{r-2} (D+1)^{r-1}$ by \cite[Corollary 9.17]{feray2013mod}. Combining this with \eqref{eq:bound-with-ST-on-cumulants} and \eqref{eq:we-use-permutation-invariance-of-ST}, we get 
	\begin{align*} 
	\left| \kappa^{(r)}\left( S^{(L)}-\sum_{k\in V} c_k \right)\right| 
	&\leq r^{r-2} (2(D+1))^{r-1}  \sum_{k \in V} \E\left[ \abs{Y_k^{(L)}}^r \right] \ . 
	\end{align*} 
	Since $|Y^{(L)}_k|\leq L$, $\E\left[ \abs{Y_k^{(L)}}^r \right] \leq L^{r-\delta} \E[|Y^{(L)}_k|^\delta] \leq L^{r-\delta} \E[|Y_k-c_k|^\delta]$. 
	The Lemma then follows by the above observation that cumulants of order $r\geq 2$ are not changed by additive constants. 
\end{proof}

Next, we gather some useful estimates on $|\E[S^{(L)}]-\E[S]|$ and $|\V[S^{(L)}]-\V[S]|$. 
By Hölder's and Markov's inequality, defining $Z^{(L)}_k = (Y_k-c_k) \ind{|Y_k-c_k|>L}$: 
\begin{align*} 
\E[|Z^{(L)}_k|] \leq \|Y_k-c_k\|_\delta \P(|Y_k-c_k|>L)^{1-1/\delta} \leq \|Y_k-c_k\|_\delta^\delta L^{1-\delta} . 
\end{align*} 
Thus 
\begin{equation}\label{eq:bound-E(S) minus E(S^-)} 
\E[\abs{S-S^{(L)}}] \leq \sum_{k\in V} \E[\abs{ Y_k - Y^{(L)}_k - c_k}] = \sum_{k\in V} \E[|Z^{(L)}_k|] \leq  L^{1-\delta} \kA_\delta . 
\end{equation} 
Next, since $Y_i$ and $Y_j$ are independent (and thus uncorrelated) whenever there is no edge between $i$ and $j$ in the dependency graph, and since $Y_k = Y^{(L)}_k + Z^{(L)}_k + c_k$, 
\begin{align*} 
\lvert\V[S^{(L)}] - \V[S]\rvert &\leq 2 \sum_{i\sim j} \lvert\Cov(Y^{(L)}_i, Z^{(L)}_j)\rvert + \sum_{i\sim j} \lvert\Cov(Z^{(L)}_i, Z^{(L)}_j)\rvert 
\end{align*} 
where the sum is over $i,j\in V$ with $i\sim j$, and we write $i\sim j$ when $i$ and $j$ are adjacent in the dependency graph of $(Y_k)_{k\in V}$ or when $i=j$. For every $i,j\in V$, 
\begin{align*} 
\lvert\Cov(Y^{(L)}_i, Z^{(L)}_j)\rvert &\leq \sqrt{\V[Y^{(L)}_i \ind{|Y_j-c_j|>L}] \V[Z^{(L)}_j]} \\ 
&\leq \sqrt{\E[\left(Y^{(L)}_i\right)^2 \ind{|Y_j-c_j|>L}] \E[\left(Z^{(L)}_j\right)^2]} \\ 
&\leq \frac{1}{2}\left( \E\left[\left(Y^{(L)}_i\right)^2 \ind{|Y_j-c_j|>L}\right] + \E\left[\left(Z^{(L)}_j\right)^2\right] \right) \\ 
&\leq  \E[|Y_j-c_j|^\delta] L^{2-\delta} 
\end{align*} 
using the arithmetico-geometric inequality, the fact that $\E[(Z^{(L)}_j)^2] \leq \E[|Y_j-c_j|^\delta] L^{2-\delta}$ by the same argument used above to bound $\E[|Z^{(L)}_j|]$, and 
$$\E\left[\left(Y^{(L)}_i\right)^2 \ind{|Y_j-c_j|>L}\right] \leq L^2 \P(|Y_j-c_j|>L) \leq \E[|Y_j-c_j|^\delta] L^{2-\delta}. $$ 
Similarly, 
\begin{multline*} 
\lvert\Cov(Z^{(L)}_i, Z^{(L)}_j) \rvert \leq \sqrt{\V[Z^{(L)}_i] \V[Z^{(L)}_j]} \\ 
\leq \frac{1}{2}\left( \V[Z^{(L)}_i] + \V[Z^{(L)}_j]\right) \leq \frac{1}{2} \left( \E[|Y_i-c_i|^\delta] + \E[|Y_j-c_j|^\delta] \right) L^{2-\delta} . 
\end{multline*} 
We finally get 
\begin{equation}\label{eq:bound difference variances} 
\lvert\V[S^{(L)}] - \V[S]\rvert \leq \sum_{k\in V} 3 L^{2-\delta} (\mathrm{deg}(k)+1) \E[|Y_k-c_k|^\delta] \leq 3 L^{2-\delta} (D+1) \kA_\delta 
\end{equation} 
where $\mathrm{deg}(k)$ is the degree of $k$ in the dependency graph of $(Y_k)_{k\in V}$.

\subsection{Fourier transform of the sum}\label{sec:proof-fourier}

The core estimate is a bound that allows us to compare the Fourier transform of the sum $S$ to that of a normally distributed random variable with same expectation and variance. Lemma \ref{lem:zone-of-control-bounded} contains such a bound for the sum $S^{(L)}$ of the random variables $(Y^{(L)}_k)_{k\in V}$, which are obtained by ``truncating'' the $(Y_k)_{k\in V}$, see the beginning of Section \ref{sec:proof-cumulants}. 
 
\begin{lemma} 
\label{lem:zone-of-control-bounded} 
Write $\delta' = \min(\delta,3)$. Let $L>0$ such that $v_L > 0$. 
For every $\xi \in \C$ such that $|\xi| \leq v_L / (2\e L (D+1))$, writing $K_L = C (D+1)^2 (L/v_L)^3 \kA_{\delta'}/L^{\delta'}$, 
\begin{equation*} 
\left|\E\left[\e^{\i\xi \frac{S^{(L)}-\E[S^{(L)}]}{v_L}}\right] \e^{\frac{\xi^2}{2}} - 1 \right| \leq 
K_L |\xi|^3 \e^{K_L |\xi|^3} , 
\end{equation*} 
with 
\begin{equation}\label{eq:Cseries-definition-b} 
C \define 4\e^3 \sum_{r\geq 3} \frac{r^{r-2}}{r! \e^r} \in [5.17, 5.18]. 
\end{equation} 
\end{lemma}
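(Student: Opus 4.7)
The approach is to Taylor-expand the cumulant-generating function $K(z) \define \ln \E[\e^{z(S^{(L)}-\E[S^{(L)}])}]$ at $z = \i\xi/v_L$ and bound the nonquadratic terms using the cumulant estimate of Lemma \ref{lem:better-cumulants}. Since $S^{(L)}$ is almost surely bounded, the moment generating function is entire, and in a neighborhood of $0$
\[ K(z) = \sum_{r\geq 2} \frac{\kappa^{(r)}(S^{(L)})}{r!} z^r, \qquad \kappa^{(2)}(S^{(L)}) = v_L^2 \]
(the $r=1$ term vanishes by centering). Applying Lemma \ref{lem:better-cumulants} with parameter $\delta' = \min(\delta,3)$ --- legitimate since every $r \geq 3$ satisfies $r \geq \delta'$ --- together with the Stirling estimate $r^{r-2}/r! = O(\e^r/r^{5/2})$, the series defining $K$ converges absolutely on the closed disc $|z| \leq 1/(2\e L(D+1))$; by analytic continuation from a neighborhood of $0$ (where $\E[\e^{z(S^{(L)}-\E[S^{(L)}])}] \neq 0$), the identity $\E[\e^{z(S^{(L)}-\E[S^{(L)}])}] = \e^{K(z)}$ extends to the whole closed disc. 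Specializing to $z=\i\xi/v_L$ and isolating the $r=2$ term, which contributes $-\xi^2/2$, yields
\[ \E\left[\e^{\i\xi(S^{(L)}-\E[S^{(L)}])/v_L}\right]\, \e^{\xi^2/2} = \e^T, \qquad T \define \sum_{r\geq 3} \frac{\kappa^{(r)}(S^{(L)})}{r!}\left(\frac{\i\xi}{v_L}\right)^r. \]

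Next I would bound $|T|$. For each $r\geq 3$, Lemma \ref{lem:better-cumulants} with parameter $\delta'$ gives $|\kappa^{(r)}(S^{(L)})| \leq r^{r-2}(2(D+1))^{r-1}L^r \kA_{\delta'}/L^{\delta'}$. Writing $u\define 2(D+1)L|\xi|/v_L$, the hypothesis on $\xi$ translates exactly to $u \leq 1/\e$, hence $u^{r-3} \leq \e^{3-r}$ for every $r\geq 3$. The triangle inequality then produces
\[ |T| \leq \frac{\kA_{\delta'}}{2(D+1)L^{\delta'}} \sum_{r\geq 3} \frac{r^{r-2}}{r!}\, u^r \leq \frac{\kA_{\delta'}\, u^3\, \e^3}{2(D+1)L^{\delta'}} \sum_{r\geq 3} \frac{r^{r-2}}{r!\,\e^r}. \]
Substituting $u^3 = 8(D+1)^3 L^3 |\xi|^3/v_L^3$ and regrouping constants yields $|T| \leq K_L |\xi|^3$ with $K_L$ and $C = 4\e^3 \sum_{r\geq 3} r^{r-2}/(r!\e^r)$ exactly as in the statement (the factor $4$ arising from $8/2$).

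To conclude I would invoke the elementary inequality $|\e^T - 1| \leq \e^{|T|} - 1 \leq |T|\e^{|T|}$, valid for every $T\in\C$: the first step is termwise via $|T^k/k!| \leq |T|^k/k!$, and the second is the standard bound $\e^x - 1 \leq x\e^x$ for $x\geq 0$. Combining with $|T| \leq K_L|\xi|^3$ yields the claimed bound $|\e^T-1| \leq K_L|\xi|^3 \e^{K_L|\xi|^3}$.

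The main technical delicacy in this plan is the first step, namely justifying that the identity $\E[\e^{z(S^{(L)}-\E[S^{(L)}])}] = \e^{K(z)}$ is valid not just on the open disc of convergence of the cumulant series but up to and including its boundary circle $|z| = 1/(2\e L(D+1))$. This rests on the Stirling estimate $r^{r-2}/(r!\e^r) = O(r^{-5/2})$, which simultaneously makes the series defining $C$ finite (with numerical value lying in $[5.17,5.18]$, verified by direct summation) and, via the Weierstrass $M$-test, yields uniform absolute convergence on the closed disc --- whence $\E[\e^{z(S^{(L)}-\E[S^{(L)}])}]$ is nonvanishing there and $K$ extends continuously to the boundary. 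The remaining steps are routine.
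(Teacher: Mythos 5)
Your proof is correct and follows essentially the same route as the paper: expand $\ln\E[\e^{\i\xi(S^{(L)}-\E[S^{(L)}])/v_L}]$ in cumulants, bound the tail $T=\sum_{r\geq 3}$ term-by-term using Lemma \ref{lem:better-cumulants} with parameter $\delta'$, use $|\xi|\leq v_L/(2\e L(D+1))$ to extract the factor $|\xi|^3$ from the series (you write this as $u^{r-3}\leq\e^{3-r}$, the paper substitutes $s=2\e(D+1)L|\xi|/v_L\leq 1$ and pulls out $s^3$ --- the same step with $s=\e u$), arriving at $|T|\leq K_L|\xi|^3$, then conclude with $|\e^T-1|\leq|T|\e^{|T|}$. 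The only difference is that you spell out the analytic justification for the identity $\E[\e^{z(\cdot)}]=\e^{K(z)}$ on the closed disc (absolute convergence via Stirling, analytic continuation, nonvanishing of the characteristic function), which the paper leaves implicit since $S^{(L)}$ is bounded.
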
 
 
This implies that $(S^{(L)}-\E[S^{(L)}])/(N^{1/3} (D+1)^{2/3} v_L)$ has a ``zone of control'' of index $(3,3)$ with respect to the reference law $\kN(0,1)$ (i.e. $\alpha=2, c=1/{\sqrt 2}$) 
in the sense of \cite[Definition 5]{feray2017mod}. 
 
\begin{proof}[Proof of Lemma \ref{lem:zone-of-control-bounded}] 
We have 
\begin{align*} 
\E\left[\e^{\i\xi\frac{S^{(L)}-\E[S^{(L)}]}{v_L}}\right]\exp\left(\frac{\xi^2}2\right) 
=\exp\left(\sum_{r=3}^\infty\frac{\kappa^{(r)}(S^{(L)})}{r!} \left(\i \frac\xi {v_L} \right)^r \right) 
\define\exp(z) . 
\end{align*} 
By the improved cumulant bound \eqref{eq:improved-cumulant-bound} with $\delta$ replaced by $\delta'$, 
\begin{align*} 
\lvert z\rvert &\leq \sum_{r=3}^\infty r^{r-2}(2 (D+1))^{r-1} L^r \frac{\kA_{\delta'}}{L^{\delta'}} \frac{\abs\xi^r}{r! v_L^r} = Z\left(2 \e (D+1) L \frac{\abs\xi}{v_L}\right), 
\end{align*} 
where 
$$Z(s) = \frac{1}{2(D+1)} \frac{\kA_{\delta'}}{L^{\delta'}} \sum_{r=3}^\infty \frac{r^{r-2}}{r! \e^r} s^r $$ 
is such that for every $s \in [0,1]$, 
$$ Z(s) \leq \left( \sum_{r\geq 3} \frac{r^{r-2}}{r!\e^r} \right) \frac{1}{2(D+1)}\frac{\kA_{\delta'}}{L^{\delta'}} s^3 . $$ 
The claim follows from $\abs{\e^z-1}\le\abs z\e^{\abs z}$. 
\end{proof}

\begin{remark}\label{rem:zone-of-control-refined} 
	By refining the proof of Lemma \ref{lem:zone-of-control-bounded}, it is possible to obtain the following estimate. Let $L>0$ such that $v_L>0$, and write $\delta" = \min(\delta,4)$ and $\rho_L = |\kappa^{(3)}(S^{(L)})|$. 
	Then for every $\xi\in\C$ such that $|\xi| \leq v_L / (2\e L (D+1))$, 
	\begin{equation*} 
		\left|\E\left[\e^{\i\xi \frac{S^{(L)}-\E[S^{(L)}]}{v_L}}\right] \e^{\frac{\xi^2}{2}} - 1 \right| \leq \xi^2 x(\xi) \e^{\xi^2 x(\xi)} , 
	\end{equation*} 
	where 
	\begin{equation*} 
		x(\xi) \define \frac{\rho_L}{6v_L^3} |\xi| + C" (D+1)^3 \left(\frac{L}{v_L}\right)^4 \frac{\kA_{\delta"}}{L^{\delta"}} |\xi|^2 
	\end{equation*} 
	and 
	$$ C" \define 8\e^4 \sum_{r\geq 4} \frac{r^{r-2}}{r! \e^r} = 2\e(C-2) \in [16.8, 17.3] 
	. $$ 
	Assuming $\delta > 3$, 
	with similar computations as those we used to establish \eqref{eq:bound difference variances} we can show that 
	$$ |\kappa^{(3)}(S^{(L)}) - \kappa^{(3)}(S)| \leq 21 (D+1)^2 L^3 \frac{\kA_{\delta}}{L^{\delta}} . $$ 
\end{remark}

Our next step is to control the Fourier transform of $S$ the sum of the non-truncated random variables. The following Lemma gives a family of bounds on the Fourier transform of $S$, indexed by the truncation $L$. Compared to Lemma \ref{lem:zone-of-control-bounded}, the formula contains additional terms to account for the tail of the distributions of the $(Y_k)_k$. 
 
\begin{lemma}\label{lem:zone-of-control-unbounded} 
Assume $v\neq 0$. For every $s\in\R\setminus\set 0$, define 
\begin{equation}\label{eq:def-u-J} 
w = w(s) = \frac{L(D+1)|s|}{v} 
\end{equation} 
Recall $C\in[5.17,5.18]$ defined in Lemma \ref{lem:zone-of-control-bounded}. Let $\delta' = \min(3,\delta)$. 
Then for every $0 < L \leq v/(2\e |s| (D+1))$ such that 
\begin{equation}\label{eq:cond-u} 
v^2 > 3(D+1) L^2 \frac{\kA_{\delta}}{L^{\delta}} , 
\end{equation} 
we have 
\begin{multline*} 
(D+1) \left| \E\left[ \e^{\i s \frac{S-\E[S]}{v}} \right] - \e^{-\frac{s^2}{2}} \right| 
\ \leq 
\left(2w + \frac{3 w^2}{2}\right) \frac{\kA_{\delta}}{L^{\delta}} \\ 
+ C \frac{\kA_{\delta'}}{L^{\delta'}} w^3 \ \exp\left( -\frac{w^2}{2(D+1)^2} \left(\frac{v^2}{L^2} - 3 (D+1) \frac{\kA_{\delta}}{L^{\delta}} - 2C(D+1) \frac{\kA_{\delta'}}{L^{\delta'}} w \right) \right) . 
\end{multline*} 
\end{lemma}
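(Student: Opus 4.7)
The plan is to relate the Fourier transform of $S$ to that of the truncated sum $S^{(L)}$, to which we can apply Lemma~\ref{lem:zone-of-control-bounded}, and to control the discrepancy using the $L^1$ and variance bounds \eqref{eq:bound-E(S) minus E(S^-)} and \eqref{eq:bound difference variances} from Section~\ref{sec:proof-cumulants}. Set $W_L \define (S^{(L)} - \E[S^{(L)}])/v_L$ and, given $s$, choose the rescaled parameter $\xi \define s\, v_L/v$ so that $\xi W_L = s(S^{(L)} - \E[S^{(L)}])/v$. The triangle inequality yields
\begin{equation*}
\left| \E[\e^{\i s W}] - \e^{-\frac{s^2}{2}} \right|
\le \left| \E[\e^{\i s W}] - \E[\e^{\i \xi W_L}] \right|
+ \left| \E[\e^{\i \xi W_L}] - \e^{-\frac{\xi^2}{2}} \right|
+ \left| \e^{-\frac{\xi^2}{2}} - \e^{-\frac{s^2}{2}} \right|,
\end{equation*}
and the three terms will produce, respectively, the $2w$, the exponential, and the $\tfrac{3w^2}{2}$ contributions in the statement.

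For the first term, I would use $|\e^{\i a} - \e^{\i b}| \le |a-b|$ together with $(S-\E[S])/v - \xi W_L/s = (S - S^{(L)} - \E[S - S^{(L)}])/v$, so that taking expectation gives a bound by $\frac{2|s|}{v}\E[|S - S^{(L)}|]$. By \eqref{eq:bound-E(S) minus E(S^-)} this is at most $\frac{2|s|L}{v}\frac{\kA_\delta}{L^\delta} = \frac{2w}{D+1}\frac{\kA_\delta}{L^\delta}$. For the third term, since $\e^{-x}$ is $1$-Lipschitz on $[0,\infty)$, it is bounded by $|\xi^2 - s^2|/2 = \frac{s^2}{2v^2}|v_L^2 - v^2|$; plugging \eqref{eq:bound difference variances} in gives at most $\frac{3w^2}{2(D+1)}\frac{\kA_\delta}{L^\delta}$.

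The middle term is the heart of the estimate. The hypothesis $L \le v/(2\e|s|(D+1))$ translates to $|\xi| \le v_L/(2\e L(D+1))$, which is exactly the range in which Lemma~\ref{lem:zone-of-control-bounded} applies (condition \eqref{eq:cond-u} together with \eqref{eq:bound difference variances} ensures $v_L^2 \ge v^2 - 3(D+1)L^{2-\delta}\kA_\delta > 0$). Writing $K_L = C(D+1)^2(L/v_L)^3 \kA_{\delta'}/L^{\delta'}$ and using $|\e^z - 1| \le |z|\e^{|z|}$ as in the Lemma, I would then simplify $K_L|\xi|^3 = \frac{C}{D+1}\frac{\kA_{\delta'}}{L^{\delta'}} w^3$ (using $|\xi|/v_L = |s|/v$) and $\xi^2/2 = \frac{w^2 v_L^2}{2 L^2 (D+1)^2}$. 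The exponent $K_L|\xi|^3 - \xi^2/2$ becomes $\frac{w^2}{2(D+1)^2}\bigl[2Cw(D+1)\kA_{\delta'}/L^{\delta'} - v_L^2/L^2\bigr]$, and lower-bounding $v_L^2 \ge v^2 - 3(D+1)L^2 \kA_\delta/L^\delta$ yields exactly the exponential factor in the statement.

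Multiplying everything by $(D+1)$ and collecting terms finishes the proof. The main technical care is bookkeeping: one has to verify that the rescaling $\xi = s v_L/v$ preserves the hypothesis of Lemma~\ref{lem:zone-of-control-bounded}, and that the substitution of $v_L^2$ with the smaller $v^2 - 3(D+1)L^{2-\delta}\kA_\delta$ in the exponent indeed yields an \emph{upper} bound on the exponential. I do not anticipate a real obstacle beyond this careful tracking; the substantive analytical input (cumulant bounds, Lemma~\ref{lem:zone-of-control-bounded}) is already in place.
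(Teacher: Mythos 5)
Your proposal is correct and follows essentially the same route as the paper: decompose by the triangle inequality with the intermediate quantity $\E[\e^{\i\xi W_L}]$ where $\xi = s v_L/v$, apply Lemma~\ref{lem:zone-of-control-bounded} to the middle term, and control the outer terms by \eqref{eq:bound-E(S) minus E(S^-)} and \eqref{eq:bound difference variances}. The only (inconsequential) difference in bookkeeping is that you absorb the mean-shift $\E[S^{(L)}]-\E[S]$ into the first term (picking up a factor of $2$ there), whereas the paper keeps the first term clean and carries the mean-shift as a phase factor in its third term; the totals agree.
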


\begin{proof} 
For every $s \in \R\setminus\{0\}$ and every $L>0$, 
\begin{multline*} 
\left| \E\left[\e^{\i s \frac{S-\E[S]}{v}}\right] - \e^{-\frac{s^2}{2}} \right| 
\leq \underbrace{\left| \E\left[\e^{\i s \frac{S-\E[S]}{v}}\right] - \E\left[\e^{\i s \frac{S^{(L)}-\E[S]}{v}}\right] \right|}_{\define\mathrm {I}} \\ 
+ \underbrace{\left| \E\left[\e^{\i s \frac{S^{(L)}-\E[S]}{v}}\right] - \e^{\i s \frac{\E[S^{(L)}]-\E[S]}{v}}\e^{- \frac 1 2 \left(\frac{v_L s}{v}\right)^2 } \right|}_{\define\mathrm{II}} 
+ \underbrace{\left| \e^{\i s \frac{\E[S^{(L)}]-\E[S]}{v}}\e^{- \frac 1 2 \left(\frac{v_L s}{v}\right)^2 } - \e^{-\frac{s^2}{2}}  \right|}_{\define\mathrm{III}}  . 
\end{multline*} 
 
Write $R^{(L)} \define S-S^{(L)} = \sum_{k\in V} Z^{(L)}_k$. 
Since the $(Y_k)_{k\in V}$ are real-valued we have \\$\abs{\exp(\i s \frac{S^{(L)}-\E[S]}v)}=1$. Using that $\abs{\e^{\i x}-1}\le\abs{x}$ for $x\in\R$, we bound I by 
\begin{align}\label{eq:bound-on-I} 
\mathrm{I} = \left| \E\left[ \e^{\i s \frac{R^{(L)}}{v}} - 1 \right] \right| 
\leq \frac{|s|}{v} \E[|R^{(L)}|] 
 \leq \frac{|s|L}{v} \frac{\kA_{\delta}}{L^{\delta}} = \frac{w}{D+1} \frac{\kA_{\delta}}{L^{\delta}} , 
\end{align} 
where the last inequality follows from \eqref{eq:bound-E(S) minus E(S^-)}. 
 
By assumption \eqref{eq:cond-u}, we have that $v_L > 0$ (since by \eqref{eq:bound difference variances}, $|v_L^2 - v^2| \leq  3(D+1) L^2 \kA_{\delta}L^{-\delta} < v^2$). 
We can use Lemma \ref{lem:zone-of-control-bounded}: for every $s$ such that $v_L|s|/v \leq v_L/(2\e L(D+1))$, 
\begin{equation}\label{eq:complicated-inequalities}\begin{split} 
\e^{\frac 1 2 \left(\frac{v_L s}{v}\right)^2 } \mathrm{II} &= \left| \e^{\frac 1 2 \left(\frac{v_L s}{v}\right)^2 } \E\left[\e^{\i s \frac{S^{(L)}-\E[S]}{v}}\right] - \e^{\i s \frac{\E[S^{(L)}]-\E[S]}v} \right| \\ 
&= \left| \e^{\frac 1 2 \left(\frac{v_L s}{v}\right)^2 } \E\left[\e^{\i \left(\frac{v_L s}{v}\right) \frac{S^{(L)}-\E[S^{(L)}]}{v_L}}\right] - 1 \right| \\ 
&\leq K_L \left(\frac{v_L |s|}{v}\right)^3 \e^{K_L\left(\frac{v_L |s|}{v}\right)^3} \\ 
&= K'_L |s|^3 \e^{K'_L|s|^3} \\ 
&= \frac{C}{D+1} \frac{\kA_{\delta'}}{L^{\delta'}} w^3 \e^{\frac{C}{D+1} \frac{\kA_{\delta'}}{L^{\delta'}} w^3} , 
\end{split}\end{equation} 
where $K_L$ is given by Lemma \ref{lem:zone-of-control-bounded} and 
\begin{equation*} 
K'_L \define C (D+1)^2 \frac{\kA_{\delta'}}{L^{\delta'}} \left(\frac L v\right)^3 \ . 
\end{equation*} 
For the last term, since $z\mapsto\e^z$ is 1-Lipschitz on $\{ z \in \C : \operatorname{Re}(z)\leq 0 \}$: 
\begin{align*} 
\mathrm{III} &\leq \abs{\i s \frac{\E[S^{(L)}]-\E[S]}{v} - \frac 1 2 \left(\frac{v_L s}{v}\right)^2 +\frac{s^2}{2}}\le \frac{|s|}{v} |\E[S^{(L)}]-\E[S]| + \frac{s^2}{2} \frac{|v_L^2 - v^2|}{v^2} . 
\end{align*} 
By \eqref{eq:bound-E(S) minus E(S^-)} and \eqref{eq:bound difference variances}, 
\begin{equation*} 
\begin{cases} 
\left|\E[S^{(L)}]-\E[S]\right| \leq \E[|S^{(L)}-S|] \leq L \frac{\kA_{\delta}}{L^{\delta}} , \\ 
|v_L^2 - v^2| \leq 3 (D+1) L^2 \frac{\kA_{\delta}}{L^{\delta}} . 
\end{cases} 
\end{equation*} 
We deduce 
\begin{equation}\label{eq:bound-on-III} 
\mathrm{III} \leq \frac{|s| L}{v} \frac{\kA_{\delta}}{L^{\delta}} + \frac{3 (D+1) s^2 L^2}{2 v^2} \frac{\kA_{\delta}}{L^{\delta}} = \frac{1}{D+1} \frac{\kA_{\delta}}{L^{\delta}} \left( w + \frac{3 w^2}{2}\right) \ . 
\end{equation} 
Combining \eqref{eq:bound-on-I}, \eqref{eq:complicated-inequalities} and \eqref{eq:bound-on-III} yields the desired result. 
\end{proof}

\subsection{Proof for bounded random variables} 
\label{sec:proof-bounded}

In this section, we prove Theorem \ref{thm:mod-phi-corollary-new}, which contains a Berry--Esseen-type bound on $S$ when the $(Y_k)_{k\in V}$ are a.s. uniformly bounded by some common constant. 
The proof relies on one hand on Lemma \ref{lem:zone-of-control-bounded}, and on the other hand on a Fourier-transform-to-Kolmogorov-distance inequality. Various options exist for this second step, differing by the constant they yield. We chose \eqref{eq:dkol-feller}, which allows for a short and relatively elementary proof. To improve the constant, one could refine our estimate for 
$$ \int_{-T}^T \left| \E\left[\e^{\i\xi W}\right] - \e^{-\frac{\xi^2}{2}} \right| \frac{\d\xi}{|\xi|} $$ 
where we recall $W = (S-\E[S])/v$, 
for example by a better bound on $|\e^z - 1|$ and a change of variable $u^2/2 = \xi^2/2 + z$ (where $z$ is given in the proof of Lemma \ref{lem:zone-of-control-bounded}). Using another Fourier-transform-to-Kolmogorov-distance inequality would also improve upon the constant, for example \cite{Shevtsova2013} which gives the state-of-the-art in the i.i.d. case as of 2012.

Let us now proceed with the proof of Theorem \ref{thm:mod-phi-corollary-new}. 
Recall that we consider random variables $Y_k$ such that 
\begin{equation*} 
\max_{k\in V} \|Y_k-c_k\|_\infty \leq L . 
\end{equation*} 
Then $S^{(L)} = S$ almost surely and in particular $v=v_L$. Let $G\sim \kN(0,1)$. By Lemma \ref{lem:zone-of-control-bounded}, for every $T \leq v/(2\e L(D+1))$ such that $2KT < 1$, 
\begin{align*} 
\int_{-T}^T \left| \E\left[\e^{\i\xi W}\right] - \E\left[\e^{\i\xi G}\right] \right| \frac{\d\xi}{|\xi|} 
&\leq K \int_\R |\xi|^2 \e^{-\frac{\xi^2}{2}(1-2KT)} \d\xi = \frac{K \sqrt{2\pi}}{(1-2KT)^{3/2}} 
\end{align*} 
where $K = K_L = C(D+1)^2 \kA_{\delta'} L^{3-\delta'}/v^3$ is given in Lemma \ref{lem:zone-of-control-bounded}. 
$K$ is minimal when taking $\delta=3$. Indeed, 
$$ \kA_{\delta'} L^{3-\delta'} = \sum_{k\in V} \E[|Y_k-c_k|^{\delta'}] L^{3-\delta'} \geq \sum_{k\in V} \E[|Y_k-c_k|^3] = \kA_3 . $$ 
 Using \eqref{eq:dkol-feller}, 
\begin{align}\label{eq:dkol_inf} 
\dkol(W,G) \leq \frac{1}{\pi} \frac{K\sqrt{2\pi}}{(1-2KT)^{3/2}} + \frac{24}{\pi \sqrt{2\pi} T} = K\sqrt{\frac{2}{\pi}} \left( (1-\alpha)^{-3/2} + \frac{24}{\pi\alpha} \right) \ , 
\end{align} 
where we write $\alpha = 2KT \leq Kv/(\e L(D+1))$. The quantity in brackets is convex on $(0,1)$ and has a unique minimum $I \leq 16.5653$ reached at $\alpha_0 \leq 0.636647$. For every $\alpha \leq \alpha_0$ the term in brackets is bounded from above by $\alpha_0 I / \alpha$, so that in fact 
$$ \inf_{\alpha<1 \ , \ \alpha \leq \frac{Kv}{\e L(D+1)}} \left[ (1-\alpha)^{-3/2} + \frac{24}{\pi\alpha} \right]  \leq  I \max\left\{1 , \alpha_0 \frac{\e L(D+1)}{Kv} \right\}  \ . $$ 
Using this in \eqref{eq:dkol_inf} yields 
\begin{align*} 
\dkol(W,G) 
&\leq \max\left\{ K I \sqrt{\frac{2}{\pi}} ,  I \sqrt{\frac{2}{\pi}} \alpha_0 \frac{\e L(D+1)}{v} \right\} \ . 
\end{align*} 
Replacing $K$ by its expression (with $\delta=3$ to have the best bound) and computing the constants finishes the proof of the Theorem.

\subsection{Improvement with third cumulant, bounded case}\label{sec:proof-bounded-plus}

We now prove Theorem \ref{thm:mod-phi-corollary-new-exact}. 
Since we closely follow the proof of Theorem \ref{thm:mod-phi-corollary-new}, we only highlight the differences. Let $G\sim \kN(0,1)$, $W = (S-\E[S])/v$ and recall $x(\xi)$ defined in Remark \ref{rem:zone-of-control-refined} as well as $\rho\define\abs{\kappa^{(3)}(S)}$. Then, by using Remark \ref{rem:zone-of-control-refined} instead of Lemma \ref{lem:zone-of-control-bounded} and writing $J = 4C"(D+1)^3 \kA_{\delta"} L^{4-\delta"} v^{-4}$, 
for every $T \leq v/(2\e L(D+1))$ such that $2x(T) < 1$, 
\begin{align*} 
\int_{-T}^T \left| \E\left[\e^{\i\xi W}\right] - \E\left[\e^{\i\xi G}\right] \right| \frac{\d\xi}{|\xi|} 
&\leq \frac{\rho}{6 v^3} \int_\R |\xi|^2 \e^{-\frac{\xi^2}{2}(1-2x(T))} \d\xi + \frac{J}{4} \int_\R |\xi|^3 \e^{-\frac{\xi^2}{2}(1-2x(T))} \d\xi \\ 
&\leq \frac{\sqrt{2\pi}}{6} \frac{\rho}{v^3} (1-2x(T))^{-3/2} + J(1-2x(T))^{-2} \ . 
\end{align*} 
Use this into \eqref{eq:dkol-feller}: 
\begin{align} 
\dkol(W,G) \leq \frac{\sqrt{2\pi}}{6\pi} \frac{\rho}{v^3} (1-2x(T))^{-3/2} + \frac{J}{\pi}(1-2x(T))^{-2}  + \frac{24}{\pi \sqrt{2\pi} T} \ . 
\end{align} 
Finding the $T$ for which the bound is smaller becomes more difficult; we instead content ourselves with taking $T = T_0 \define v/(2\e L(D+1))$ if $2x(T_0) < \alpha_0$ where $\alpha_0$ is the same as in the proof of Theorem \ref{thm:mod-phi-corollary-new}, and $T$ such that $2x(T) = \alpha_0$ otherwise. Then 
\begin{align}\label{eq:dkol_inf_b} 
\dkol(W,G) \leq \frac{\sqrt{2\pi}}{6\pi (1-\alpha_0)^{3/2}} \frac{\rho}{v^3} + \frac{J}{\pi(1-\alpha_0)^2}  + \frac{24}{\pi \sqrt{2\pi} T} \ . 
\end{align} 
The $T_1$ such that $2x(T_1)=\alpha_0$ is given by 
\begin{align*} 
T_1 = \frac{2}{J} \frac{\rho}{6 v^3} \left( - 1 + \sqrt{ 1 + \alpha_0 \frac{J}{2} \left(\frac{6v^3}{\rho}\right)^2 } \right). 
\end{align*} 
Using that for every $y>0$, 
$$ \frac{1}{\sqrt{1+y}-1} = \frac{1+\sqrt{1+y}}{y} \leq \frac{1}{\sqrt y} + \frac{2}{y} \ , $$ 
we obtain 
\begin{align*} 
\frac{1}{T_1} &\leq \frac{J}{2} \frac{6 v^3}{\rho} \left( \frac{\rho}{6v^3} \sqrt{ \frac{2}{\alpha_0 J}} + \frac{4}{\alpha_0 J} \left(\frac{\rho}{6v^3}\right)^2\right) \\ 
&\leq \sqrt{ \frac{J}{2 \alpha_0}} + \frac{2}{\alpha_0} \frac{\rho}{6v^3} \ . 
\end{align*} 
We have $2x(T_0)<\alpha_0$ if and only if $T_0 < T_1$, in which case $1/T_0 > 1/T_1$. Combined with \eqref{eq:dkol_inf_b} we get 
\begin{align}\label{eq:precis-final} 
\dkol(W,G) \leq& \frac{\sqrt{2\pi}}{6\pi (1-\alpha_0)^{3/2}} \frac{\rho}{v^3} + \frac{J}{\pi(1-\alpha_0)^2} + \frac{24}{\pi \sqrt{2\pi}} \max \left\{ \frac{2\e L(D+1)}{v} \ , \ \sqrt{ \frac{J}{2 \alpha_0}} + \frac{2}{\alpha_0} \frac{\rho}{6v^3} \right\} \\ 
\leq& 0.607148 \frac{\rho}{v^3} + 116.84 \frac{(D+1)^3 \kA_{\delta"} L^{4-\delta"}}{v^4}\nonumber\\ 
&+ \max \left\{ 16.57\frac{L(D+1)}{v} \ , \ 22.47 \sqrt{\frac{(D+1)^3 \kA_{\delta"} L^{4-\delta"}}{v^4}} + 1.596 \frac{\rho}{v^3} \right\} \ .\nonumber 
\end{align} 
By the same reasoning as in the proof of Theorem \ref{thm:mod-phi-corollary-new}, the smallest value for $J$ is attained for $\delta=4$, an assumption that we make to obtain the bound of Theorem \ref{thm:mod-phi-corollary-new-exact}.

To conclude, we prove that the bound of Theorem \ref{thm:mod-phi-corollary-new-exact} is always better than the bound of Theorem \ref{thm:mod-phi-corollary-new}, up to a constant multiplicative factor. Recall the latter: 
	\begin{align}\label{eq:precis-comparaison} 
		\dkol\left(W, \kN(0,1)\right) \leq \max\left\{ C I \sqrt{\frac{2}{\pi}} (D+1)^2 \frac{\kA_3}{v^3} ,  I \sqrt{\frac{2}{\pi}} \alpha_0 \frac{\e L(D+1)}{v} \right\} 
	\end{align} 
with, recalling $\alpha_0$ as in the proof of Theorem \ref{thm:mod-phi-corollary-new}, 
$$ I = \frac{1}{(1-\alpha_0)^{3/2}} + \frac{24}{\pi \alpha_0} . $$ 
Let us bound every term of \eqref{eq:precis-final}. First, we use that by \eqref{eq:improved-cumulant-bound}, 
$$ \rho \leq 12 (D+1)^2 \kA_3 \qquad \implies \qquad \frac{\sqrt{2\pi}}{6\pi} \frac{\rho}{v^3} \leq 2\sqrt{\frac{2}{\pi}} (D+1)^2 \frac{\kA_3}{v^3} \leq \frac{2}{CI} (\star) $$ 
where $(\star)$ is the right-hand side of \eqref{eq:precis-comparaison}. 
The second term of \eqref{eq:precis-final} can be written as 
$$ \frac{8\e (C-2)}{\pi (1-\alpha_0)^2} (D+1)^3 \frac{\kA_4}{v^4} \ = \ \frac{8\e (C-2)}{\pi (1-\alpha_0)^2} \frac{\kA_4}{L \kA_3} (D+1)^2 \frac{\kA_3}{v^3} \frac{L(D+1)}{v} \ . $$ 
Since $\kA_4 \leq L\kA_3$ and since the square of a maximum of two positive terms is larger than the product of the two terms, we deduce that the second term of \eqref{eq:precis-final} is smaller than 
\begin{multline*} \frac{8\e (C-2)}{\pi (1-\alpha_0)^2} \left( CI\sqrt{\frac{2}{\pi}} \cdot I\e\alpha_0\sqrt{\frac{2}{\pi}} \right)^{-1} (\star)^2 = \frac{8\e (C-2)}{\pi (1-\alpha_0)^2} \frac{\pi}{2 C I^2 \e \alpha_0} (\star)^2 = \frac{4(C-2)}{(1-\alpha_0)^2 C I^2 \alpha_0} (\star)^2 \ . 
\end{multline*} 
The first term in the maximum of \eqref{eq:precis-final} is smaller than 
$$ \frac{48 \e}{\pi \sqrt{2\pi}} \left( I\e\alpha_0 \sqrt{\frac{2}{\pi}} \right)^{-1} (\star) = \frac{24}{\pi I \alpha_0} \ (\star) \ . $$ 
To finish, we use the same bounds as before to get 
\begin{align*} 
\frac{24}{\pi\sqrt{2\pi}} \frac{2}{\alpha_0} \frac{\rho}{6v^3} &\leq \frac{24}{\pi \alpha_0} \left( 2\sqrt{\frac{2}{\pi}} (D+1)^2 \frac{\kA_3}{v^3} \right) \leq \frac{2}{CI}\frac{24}{\pi\alpha_0} \\ 
\frac{24}{\pi\sqrt{2\pi}} \sqrt{\frac{J}{2\alpha_0}} &\leq \frac{24}{\pi\sqrt{2\pi}} \sqrt{\frac{4\e(C-2)}{\alpha_0} \left( CI^2 \frac{2}{\pi} \e\alpha_0 \right)^{-1} } \ (\star) \\ 
&=  \frac{24}{\pi \alpha_0 I} \sqrt{\frac{C-2}{C} } \ (\star) 
\end{align*} 
Assuming $(\star)\leq 1$, the maximum in \eqref{eq:precis-final} is smaller than 
\begin{align*} 
\frac{24}{\pi I \alpha_0} \max\left\{ 1 \ , \ \frac{2}{C} + \sqrt{\frac{C-2}{C}} \right\} = \frac{24}{\pi I \alpha_0} \left( \frac{2}{C} + \sqrt{\frac{C-2}{C}} \right) \leq 0.8483 . 
\end{align*} 
Putting the estimates together in \eqref{eq:precis-final}, assuming $(\star)\leq 1$, 
\begin{align*} 
\dkol(W,G) \leq& \frac{2}{C I} \left( \frac{1}{(1-\alpha_0)^{3/2}} + \frac{24}{\pi \alpha_0} \right) (\star) + \sqrt{\frac{C-2}{C}} \frac{24}{\pi I \alpha_0} (\star) + \frac{4(C-2)}{(1-\alpha_0)^2 C I^2 \alpha_0} (\star) \\ 
\leq& \left[ \frac{2}{C} + \sqrt{\frac{C-2}{C}} \frac{24}{\pi I \alpha_0} + \frac{4(C-2)}{(1-\alpha_0)^2 C I^2 \alpha_0} \right] (\star) \ . 
\end{align*} 
The constant between brackets can be numerically estimated as being smaller than $1.06164$. 
 
If furthermore $c_k = \E[Y_k]$ for every $k\in V$, 
\begin{align} 
\rho = |\E[(S-\E[S])^3]| &\leq \sum_{i,j,k \in V} |\E[(Y_i-\E[Y_i])(Y_j-\E[Y_j])(Y_k-\E[Y_k])] | \label{eq:sum-rho-better}\\ 
&\leq \sum_{(i,j,k) \in \kV} \frac{1}{3}\left( \E[|Y_i-\E[Y_i]|^3] + \E[|Y_j-\E[Y_j]|^3] + \E[|Y_k-\E[Y_k]|^3] \right)\nonumber \\ 
&\leq \sum_{(i,j,k) \in \kV} \E[|Y_i-\E[Y_i]|^3]\nonumber 
\end{align} 
where we used the invariance of the sum by permutation of $(i,j,k)$ in the last line, and $\kV$ are the triplets $(i,j,k)$ that are connected in the dependency graph of $(Y_k)_{k\in V}$: indeed, if $(i,j,k)\notin \kV$ then one of $Y_i, Y_j, Y_k$, say $Y_i$ wlog, is independent from $(Y_j,Y_k)$ and the term in the sum \eqref{eq:sum-rho-better} is zero. 
For every $i\in V$ there are at most $3(D+1)^2$ couples $(j,k)$ with $(i,j,k)\in\kV$. It follows that $ \rho \leq 3(D+1)^2 \kA_3$. Adapting the computations yields a constant of $0.85771$ instead of $1.06164$.

\subsection{Proof with a finite third moment} 
\label{sec:proof-3+} 
 
We handle the case where the $(Y_k)_{k\in V}$ are not assumed to be almost surely uniformly bounded. 
In this section, we prove Theorem \ref{thm:dkol-delta-3+}, which contains our Berry--Esseen-type estimate in the case $\delta \in [3,\infty)$. Recall $W = (S-\E[S])/v$ and $\xi_\delta$ as in Definition \ref{def:renormedSD}, and let $G\sim\kN(0,1)$.

\begin{proposition}\label{prop:decrease-xi} 
	For all $1\le\delta\le\tilde\delta<\infty$, 
	\begin{equation*}\left(\frac{\kA_\delta}N\right)^{\frac 1\delta}\le \left(\frac{\kA_{\tilde\delta}}{N}\right)^{\frac{1}{\tilde\delta}}. 
	\end{equation*} 
	In particular, 
	\begin{equation*} 
		\xi_{\tilde\delta}\le\xi_\delta. 
	\end{equation*} 
	We also have $\xi_\delta\in[0,1]$ for every $\delta\in[2,\infty)$. 
\end{proposition}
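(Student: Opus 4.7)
My plan is to separate the proof into two parts: the monotonicity of $(\kA_\delta/N)^{1/\delta}$ in $\delta$, from which the monotonicity of $\xi_\delta$ follows immediately; and then the bound $\xi_\delta \le 1$ for $\delta \ge 2$, which, thanks to the monotonicity, reduces to showing $\xi_2 \le 1$.

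For the monotonicity, the cleanest approach is to package the sum-over-$k$ together with the expectation as a single integral against a probability measure, and then invoke Lyapunov (i.e.\ the monotonicity of $L^p$-norms on probability spaces). Concretely, I would equip $V \times \Omega$ with the probability measure $\mu = \frac{1}{N} \sum_{k\in V} \delta_k \otimes \P$ and consider the function $f(k,\omega) = |Y_k(\omega) - c_k|$. Then $\|f\|_{L^\delta(\mu)}^\delta = \frac{1}{N}\sum_{k\in V} \E[|Y_k - c_k|^\delta] = \kA_\delta / N$, so Lyapunov's inequality on $(V\times\Omega,\mu)$ gives $(\kA_\delta/N)^{1/\delta} = \|f\|_{L^\delta(\mu)} \le \|f\|_{L^{\tilde\delta}(\mu)} = (\kA_{\tilde\delta}/N)^{1/\tilde\delta}$. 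Since the factor $\sqrt{\V[S]/(N(D+1))}$ in the definition of $\xi_\delta$ does not depend on $\delta$, this immediately implies $\xi_{\tilde\delta} \le \xi_\delta$.

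For the bound $\xi_2 \le 1$, I would expand $\V[S] = \sum_{i,j\in V}\Cov(Y_i,Y_j)$ and use that the pairs $(i,j)$ with $i\neq j$ that are not adjacent in the dependency graph contribute zero, since such $Y_i, Y_j$ are independent. On the remaining pairs (either $i=j$ or $i\sim j$, of which there are at most $N(D+1)$), I would bound each $\Cov(Y_i,Y_j) = \Cov(Y_i-c_i,Y_j-c_j)$ by Cauchy--Schwarz in $L^2$, use $\V[Y_k]\le\E[(Y_k-c_k)^2]$ (since the variance minimises the mean squared distance), and apply AM--GM to obtain
\begin{equation*}
\V[S] \le \sum_{i=j \text{ or } i\sim j} \tfrac{1}{2}\bigl(\E[(Y_i-c_i)^2] + \E[(Y_j-c_j)^2]\bigr) \le (D+1)\kA_2,
\end{equation*}
where the last step uses the symmetry $i\leftrightarrow j$ together with the fact that each $i$ has at most $D+1$ ``neighbours or self''. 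Substituting gives $\xi_2^2 = \V[S]/((D+1)\kA_2) \le 1$, and then $\xi_\delta \le \xi_2 \le 1$ for every $\delta \ge 2$ by the monotonicity just established. The non-negativity $\xi_\delta \ge 0$ is immediate.

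No step looks substantively hard; the main obstacle is just book-keeping, specifically making sure that the Lyapunov step is applied to a genuine \emph{probability} measure (hence the normalisation by $N$), and that the double-counting in the $\V[S]$ bound is handled symmetrically so as to produce the correct constant $D+1$ rather than $2(D+1)$.
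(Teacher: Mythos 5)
Your proposal is correct and follows essentially the same route as the paper: the monotonicity of $(\kA_\delta/N)^{1/\delta}$ via the product probability measure $\frac{1}{N}\sum_k\delta_k\otimes\P$ (the paper writes the same argument directly as a Jensen inequality rather than invoking Lyapunov by name, but they are the same thing), and the bound $\xi_2\le 1$ via Cauchy--Schwarz, the variational characterisation of the variance, and AM--GM on the at-most-$N(D+1)$ adjacent-or-equal pairs. A minor remark: the paper's displayed chain contains a typo, writing $\sqrt{\|Y_i-c_i\|_2\|Y_j-c_j\|_2}$ where $\|Y_i-c_i\|_2\|Y_j-c_j\|_2$ is meant; your write-up has it right.
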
 
\begin{proof} 
	Write $(\Omega, \kF, \P)$ for the probability space on which the $(Y_k)_{k\in V}$ are defined. 
	Let $\mathsf Q= \frac1N\sum_{k\in V} \delta_{k}$ 
	be the uniform probability measure on $V$ and let $f:V\times\Omega \to[0,\infty) \, , \, (k, \omega) \mapsto \abs{Y_k(\omega)-c_k}^\delta$. Then 
	\begin{equation*} 
	\left(\frac{\kA_\delta}N\right)^{\frac 1\delta}\le \left(\frac{\kA_{\tilde\delta}}{N}\right)^{\frac{1}{\tilde\delta}} 
	\qquad \iff \qquad 
	\left(\int_{V\times\Omega} f\,\mathrm d\mathsf Q\otimes\P \right)^{\frac{\tilde\delta}{\delta}}  \le  \int_{V\times\Omega} f^{\frac{\tilde\delta}{\delta}}\,\mathrm d\mathsf Q\otimes\P, 
	\end{equation*} 
	which is a consequence of Jensen's inequality. 
	 
	It remains to prove that $\xi_\delta\in[0,1]$ for every $\delta\in[2,\infty)$. By the previously proven monotonicity of $\delta \mapsto \xi_\delta$, it is enough to prove $\xi_2\le 1$. For $i,j\in V$, write $i\sim j$ if $i$ is adjacent to $j$ in the dependency graph, or if $i=j$. Recall that $\Cov(Y_i,Y_j) = 0$ if we do not have $i\sim j$, and that by the Cauchy-Schwarz inequality and the inequality of arithmetic and geometric means, 
$$ \Cov(Y_i,Y_j) = \Cov(Y_i-c_i,Y_j-c_j)\leq \sqrt{\|Y_i-c_i\|_2\|Y_j-c_j\|_2} \leq \frac{1}{2} \left( \|Y_i-c_i\|_2^2 + \|Y_j-c_j\|_2^2 \right) \ . $$ 
Therefore 
	\begin{multline*} 
		v^2 = \V[S] = \sum_{i,j\in V} \Cov(Y_i, Y_j)  \le \frac{1}{2} \sum_{i\in V}\sum_{j\in V:j\sim i} \left( \|Y_i-c_i\|_2^2 + \|Y_j-c_j\|_2^2 \right)\\ \leq \kA_2 \sup_{i\in V} \sum_{j\in V: j\sim i} 1 \leq \kA_2 (D+1) \ . 
	\end{multline*} 
	It follows that $\xi_2=\sqrt{\frac{v^2}{\kA_2(D+1)}}\le 1$. 
\end{proof} 
 
\begin{lemma}\label{lem:fourier-delta-3+} 
Assume $\delta\geq 3$, as well as $v\neq 0$. Recall $C\in[5.17,5.18]$ as in Lemma \ref{lem:zone-of-control-bounded}. For every $s\in\R$ 
such that both of the following inequalities hold 
\begin{equation}\label{eq:cond-u-strong-3+} 
\begin{cases} 
|s| &\leq \frac{1}{6C} \sqrt{\frac{N}{D+1}} \xi_3^3 , \\ 
|s| &\leq \frac{1}{2\e}\sqrt{\frac{N}{D+1}} \left(\frac{1}{9}\xi_\delta^\delta\right)^{\frac{1}{\delta-2}}  , 
\end{cases} 
\end{equation} 
we have 
\begin{equation}\label{eq:Fourier-bound-d>=3} 
\left| \E\left[ \e^{\i s W} \right] - \e^{-\frac{s^2}{2}} \right| 
\ \leq 
\left(\frac{1}{\e} + \frac{3}{8 \e^2} \right) \left(\frac{2\e\abs s}{\xi_\delta}\right)^\delta \left(\frac{D+1}N\right)^{\frac{\delta-2}{2}} 
+ C  \left(\frac{|s|}{\xi_3}\right)^3 \sqrt{\frac{D+1}{N}}\exp\left( -\frac{s^2}{6} \right). 
\end{equation} 
\end{lemma}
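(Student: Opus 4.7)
}

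The strategy is a direct application of Lemma \ref{lem:zone-of-control-unbounded} with a specific choice of truncation parameter $L$, followed by bookkeeping to rewrite everything in terms of $\xi_\delta$, $\xi_3$, $N$, and $D+1$. Since $\delta\geq 3$, we have $\delta' = 3$ in the Lemma. Note that the target bound \eqref{eq:Fourier-bound-d>=3} has constants $(1/\e + 3/(8\e^2))(2\e)^\delta$ multiplying the $(|s|/\xi_\delta)^\delta$ term, which factors as $(2w_0/w_0^\delta) + (3w_0^2/2)/w_0^\delta$ for $w_0 = 1/(2\e)$. This suggests choosing $L = v/(2\e(D+1)|s|)$, so that $w \define L(D+1)|s|/v = 1/(2\e)$.

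With this choice, the first condition of Lemma \ref{lem:zone-of-control-unbounded} ($L \leq v/(2\e|s|(D+1))$) holds with equality. Next I would verify the second hypothesis \eqref{eq:cond-u}. After substitution, $3(D+1)L^{2-\delta}\kA_\delta = 3(D+1)\kA_\delta(2\e(D+1)|s|/v)^{\delta-2}$; rewriting this using
\begin{equation*}
\frac{\kA_\delta(D+1)^{\delta-1}}{v^\delta} = \frac{1}{\xi_\delta^\delta}\left(\frac{D+1}{N}\right)^{(\delta-2)/2}
\end{equation*}
(which follows from the definition $\xi_\delta^\delta = (N/\kA_\delta)(v^2/(N(D+1)))^{\delta/2}$), the second hypothesis of \eqref{eq:cond-u-strong-3+} translates to $3(D+1)L^{2-\delta}\kA_\delta \leq v^2/3 < v^2$, as required.

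Plugging $w=1/(2\e)$ into the first (non-exponential) term of Lemma \ref{lem:zone-of-control-unbounded} gives
\begin{equation*}
\frac{1}{D+1}\left(2w+\tfrac{3}{2}w^2\right)\frac{\kA_\delta}{L^\delta} = \left(\tfrac{1}{\e}+\tfrac{3}{8\e^2}\right)(2\e|s|)^\delta \cdot \frac{\kA_\delta(D+1)^{\delta-1}}{v^\delta},
\end{equation*}
which matches the first term of the target bound via the identity above. Similarly, the prefactor of the exponential becomes
\begin{equation*}
\frac{C}{D+1}\cdot\frac{\kA_3}{L^3}w^3 = C(D+1)^2\frac{\kA_3|s|^3}{v^3} = C\left(\frac{|s|}{\xi_3}\right)^3\sqrt{\frac{D+1}{N}},
\end{equation*}
using $\kA_3(D+1)^2/v^3 = \xi_3^{-3}\sqrt{(D+1)/N}$.

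The remaining work is to check that the exponent is at most $-s^2/6$. The leading piece $-\frac{w^2}{2(D+1)^2}\cdot v^2/L^2$ evaluates exactly to $-s^2/2$. The correction $3(D+1)\kA_\delta/L^\delta$ is bounded by $(1/3)v^2/L^2$ by the second hypothesis of \eqref{eq:cond-u-strong-3+}, and the third correction $2C(D+1)\kA_3/L^3\cdot w$ divided by $v^2/L^2$ equals $2C(D+1)^2\kA_3|s|/v^3 = 2C(|s|/\xi_3^3)\sqrt{(D+1)/N}$, which is $\leq 1/3$ by the first hypothesis of \eqref{eq:cond-u-strong-3+}. Consequently the bracket is at least $(1/3)v^2/L^2$, the exponent is at most $-s^2/6$, and the claim follows.

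The only mildly delicate step is the algebraic translation between the expressions $\kA_\delta L^{-\delta}$, $v^2/L^2$ and the normalized quantities $\xi_\delta$, $\xi_3$; the two hypotheses of \eqref{eq:cond-u-strong-3+} (with the constants $1/9$ and $1/(6C)$) are precisely what is needed so that the two ``correction terms'' in the exponent each contribute at most $1/3$ of $v^2/L^2$, leaving $1/3$ of it to produce the decay $\e^{-s^2/6}$.
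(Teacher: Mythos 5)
Your proposal is correct and follows essentially the same route as the paper: apply Lemma \ref{lem:zone-of-control-unbounded} with $\delta'=3$, pick $L=v/(2\e(D+1)|s|)$ so that $w=1/(2\e)$, observe that the two hypotheses in \eqref{eq:cond-u-strong-3+} are exactly equivalent to bounding each of the two correction terms in the exponent by $\tfrac13 v^2/L^2$ (which also verifies \eqref{eq:cond-u}), and then rewrite the resulting bound in terms of $\xi_\delta$ and $\xi_3$ via the identity $\kA_\delta(D+1)^{\delta-1}/v^\delta = \xi_\delta^{-\delta}\big((D+1)/N\big)^{(\delta-2)/2}$. The algebraic checks you carry out are the same ones the paper performs.
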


\begin{proof} 
If $s=0$ there is nothing to prove, so assume $s\neq 0$. Recall $w$ given by \eqref{eq:def-u-J}. Then by Lemma \ref{lem:zone-of-control-unbounded}, for every $s\in\R$ and $0<L\leq v/(2\e|s|(D+1))$ that satisfies \eqref{eq:cond-u}, 
\begin{multline*} 
(D+1) \left| \E\left[ \e^{\i s W} \right] - \e^{-\frac{s^2}{2}} \right| 
\ \leq 
\left(2w + \frac{3 w^2}{2}\right) \frac{\kA_{\delta}}{L^{\delta}} \\ 
+ C \frac{\kA_3}{L^3} w^3 \ \exp\left( -\frac{w^2}{2(D+1)^2} \left(\frac{v^2}{L^2} - 3 (D+1) \frac{\kA_{\delta}}{L^{\delta}} - 2C(D+1) \frac{\kA_3}{L^3} w \right) \right) \ . 
\end{multline*} 
The infimum of the right-hand side is reached for $L = v/(2\e|s|(D+1))$ (provided that this $L$ satisfies \eqref{eq:cond-u}), giving $w=\frac{1}{2\e}$ and 
\begin{multline}\label{eq:long-calc} 
(D+1) \left| \E\left[ \e^{\i s W} \right] - \e^{-\frac{s^2}{2}} \right| 
\ \leq 
\left(\frac{1}{\e} + \frac{3}{8 \e^2} \right) \frac{\kA_\delta}{v^\delta} \left(2\e(D+1)|s|\right)^\delta \\ 
+ \frac{C}{8\e^3} \frac{\kA_3}{v^3} (2\e |s|(D+1))^3 \ \exp\Big(- \frac{|s|^2}{2} \Big( 1 - 3(D+1) \frac{\kA_\delta}{v^\delta} (2\e |s|(D+1))^{\delta-2} \\ 
- 2C(D+1) \frac{\kA_3}{v^3} (|s|(D+1)) \Big)\Big) \ . 
\end{multline} 
Then \eqref{eq:cond-u} is satisfied if and only if $3(D+1) \frac{\kA_{\delta}}{v^{\delta}} (2\e(D+1)|s|)^{\delta-2} < 1$. But by Definition of $\xi_\delta$, assumption \eqref{eq:cond-u-strong-3+} is equivalent to 
\begin{equation*} 
\begin{cases} 
2C (D+1)\frac{\kA_3}{v^3} (|s| (D+1)) &\leq \frac{1}{3} \ , \\ 
3(D+1) \frac{\kA_\delta}{v^\delta} (2\e |s| (D+1))^{\delta-2} &\leq \frac{1}{3} \ . 
\end{cases} 
\end{equation*} 
This implies that \eqref{eq:cond-u} holds, and using \eqref{eq:long-calc} we obtain furthermore that 
\begin{equation}\label{eq:Fourier-bound-d>=3-variant} 
	(D+1) \left| \E\left[ \e^{\i s W} \right] - \e^{-\frac{s^2}{2}} \right| \le \left(\frac{1}{\e} + \frac{3}{8 \e^2} \right) \frac{\kA_\delta}{v^\delta} \left(2\e(D+1)|s|\right)^\delta + C \frac{\kA_3}{v^3} (|s|(D+1))^3 \ \exp\left(-\frac{s^2}6\right). 
\end{equation} 
By Definition of $\xi_\delta$, the right-hand side of \eqref{eq:Fourier-bound-d>=3-variant} is equal to the right-hand side of \eqref{eq:Fourier-bound-d>=3}. 
\end{proof} 
 
\begin{proof}[Proof of Theorem \ref{thm:dkol-delta-3+}] 
We again follow the proof of Theorem \ref{thm:mod-phi-corollary-new}. Recall \eqref{eq:dkol-feller}: for every $t>0$, 
\begin{align*} 
\dkol\left(W,G\right) \leq& \frac{1}{\pi} \int_{-t}^t \left| \E\left[\e^{\i s W}\right] - \e^{-\frac{s^2}{2}} \right| \frac{1}{|s|} \d s + \frac{24}{t\pi\sqrt{2\pi}} . 
\end{align*} 
Provided that $t$ satisfies \eqref{eq:cond-u-strong-3+}, which we recall here: 
\begin{equation*} 
\begin{cases} 
t &\leq \frac{1}{6C} \sqrt{\frac{N}{D+1}} \xi_3^3 , \\ 
t &\leq \frac{1}{2\e}\sqrt{\frac{N}{D+1}} \left(\frac{\xi_\delta^\delta}{9}\right)^{\frac{1}{\delta-2}}  , 
\end{cases} 
\end{equation*} 
and which is implied by (recalling $\xi_\delta \leq \xi_3$ by Proposition \ref{prop:decrease-xi}) 
\begin{equation}\label{eq:cond-t-stronger} 
t \leq \frac{\xi_\delta^3}{18\e} \sqrt{\frac{N}{D+1}} \ , 
\end{equation} 
we can use Lemma \ref{lem:fourier-delta-3+}: 
\begin{align*} 
\dkol\left(W,G\right) 
&\leq \frac{1}{\pi} \int_{-t}^t \frac{1}{|s|} \left(\frac 1 \e + \frac{3}{8 \e^2} \right) \left(\frac{2\e |s|}{\xi_\delta}\right)^{\delta} \left(\frac{D+1}{N}\right)^{\frac{\delta-2}{2}} \d s \\ 
&+ \frac{1}{\pi} \int_{-t}^t  C \left(\frac{|s|}{\xi_3}\right)^{2} \sqrt{\frac{D+1}{N}} \ \exp\left( -\frac{s^2}{6} \right)  \frac{\d s}{\xi_3} 
+ \frac{24}{t\pi\sqrt{2\pi}} \ . 
\end{align*} 
Use the substitution $x=\frac{2\e s}{\xi_\delta} \left(\frac{D+1}N\right)^{\frac 12-\frac 1\delta}$ on the first term: 
\begin{equation*}\begin{split} 
\int_{-t}^t \frac{1}{|s|} \left(\frac 1 \e + \frac{3}{8 \e^2} \right) \left(\frac{2\e |s|}{\xi_\delta}\right)^{\delta} \left(\frac{D+1}{N}\right)^{\frac{\delta-2}{2}} \d s &= \left(\frac{2}{\e}+\frac{3}{4\e^2}\right)\int_{0}^t \frac{1}{|s|} \left(\frac{2\e |s|}{\xi_\delta}\right)^{\delta} \left(\frac{D+1}{N}\right)^{\delta\left(\frac{1}{2}-\frac{1}{\delta}\right)} \d s \\ 
&=\frac{8\e+3}{4\e^2} \int_0^{\frac{2\e t}{\xi_\delta} \left(\frac{D+1}{N}\right)^{\frac{1}{2}-\frac{1}{\delta}} } x^{\delta-1} \d x \\ 
&=\frac{8\e+3}{4\e^2\delta} \left(\frac{2\e t}{\xi_\delta} \left(\frac{D+1}{N}\right)^{\frac{1}{2}-\frac{1}{\delta}}\right)^{\delta} . 
\end{split}\end{equation*} 
The second integral simplifies to 
\begin{align*} 
\frac{C}{\pi \xi_3^3}\sqrt{\frac{D+1}{N}} \int_{-t}^t  s^{2} \ \exp\left( -\frac{s^2}{6} \right)  \d s 
\leq \frac{3 C \sqrt{6\pi}}{\pi \xi_3^3} \sqrt{\frac{D+1}{N}} . 
\end{align*} 
Therefore, 
\begin{align} 
\nonumber \dkol\left(W,G\right) 
&\leq \frac{8\e+3}{4\pi\e^2 \delta} \left( \frac{2\e t}{\xi_\delta} \left(\frac{D+1}{N}\right)^{\frac{1}{2}-\frac{1}{\delta}} \right)^{\delta} 
+ \frac{3 C \sqrt{6\pi}}{\pi \xi_3^3} \sqrt{\frac{D+1}{N}} 
+ \frac{24}{t\pi\sqrt{2\pi}} \\ 
&\leq \frac{8\e+3}{4\pi\e^2 \delta} \left( \frac{2\e t}{\xi_\delta} \left(\frac{D+1}{N}\right)^{\frac{1}{2}-\frac{1}{\delta}} \right)^{\delta} 
+ \left( \frac{C \sqrt{6\pi}}{6 \e \pi} + \frac{24}{\pi\sqrt{2\pi}}\right) \frac{1}{t}  \label{eq:dkol-intermediaire} 
\end{align} 
since $\frac{1}{\xi_3^3}\sqrt{\frac{D+1}{N}} \leq \frac{1}{18\e t}$ by \eqref{eq:cond-t-stronger} (using this bound lowers the sharpness of the final bound, but gives a simpler expression.)

Writing $(\star)(t)$ for the right-hand side of \eqref{eq:dkol-intermediaire}, seen as a function of $t$, we get 
\begin{align*} 
\dkol(W,G) &\leq \inf_{t \leq \frac{\xi_\delta^3}{18\e}\sqrt{\frac{N}{D+1}}} (\star) 
= \begin{cases} 
(\star)\left( \frac{\xi_\delta^3}{18\e}\sqrt{\frac{N}{D+1}} \right) &\text{ if } \frac{\xi_\delta^3}{18\e}\sqrt{\frac{N}{D+1}} < t_0 \\ 
 (\star)(t_0) &\text{ otherwise} 
 \end{cases} 
\end{align*} 
where $t_0$ is the minimum of $(\star)$ over $(0,\infty)$. $t_0$ exists and is unique, and is given by setting the derivative of $(\star)$ with respect to $t$ equal to $0$: 
\begin{align*} 
\left(\frac{1}{\xi_\delta}\left(\frac{D+1}{N}\right)^{\frac 1 2 - \frac 1 \delta}\right)^{\delta} (2\e t_0)^{\delta+1} = B \define 2\e \frac{4\pi\e^2}{8\e+3} \left( \frac{C \sqrt{6\pi}}{6 \e \pi} + \frac{24}{\pi\sqrt{2\pi}}\right) \in (71.107, 71.125)  \\ 
\iff  t_0 = \frac{1}{2\e} B^{\frac{1}{\delta+1}}  \left(\xi_\delta \left(\frac{N}{D+1}\right)^{\frac{1}{2}-\frac{1}{\delta}} \right)^{\frac{\delta}{\delta+1}} , 
\end{align*} 
which gives (noting that $\sup_{x\geq 3} (1+1/x) B^{-1/(x+1)} = 1$): 
\begin{align*} 
(\star)(t_0) 
&\leq 2\e \left(1+\frac{1}{\delta}\right) \left( \frac{C \sqrt{6\pi}}{6 \e \pi} + \frac{24}{\pi\sqrt{2\pi}}\right) B^{-\frac{1}{\delta+1}} \left(\xi_\delta \left(\frac{N}{D+1}\right)^{\frac{1}{2}-\frac{1}{\delta}}\right)^{-\frac{\delta}{\delta+1}} \\ 
&\leq 2\e \left( \frac{C \sqrt{6\pi}}{6 \e \pi} + \frac{24}{\pi\sqrt{2\pi}}\right) \left(\xi_\delta \left(\frac{N}{D+1}\right)^{\frac{1}{2}-\frac{1}{\delta}}\right)^{-\frac{\delta}{\delta+1}} \\ 
&\leq 18.96 \, \left(\xi_\delta \left(\frac{N}{D+1}\right)^{\frac{1}{2}-\frac{1}{\delta}}\right)^{-\frac{\delta}{\delta+1}} \ . 
\end{align*} 
Let us compare $(\star)(t)$ to 
$$ \varphi(t) \define \left(\frac{1}{\delta}+1\right) \left( \frac{C \sqrt{6\pi}}{6 \e \pi} + \frac{24}{\pi\sqrt{2\pi}}\right) \frac{1}{t} . $$ 
Using the formula for $t_0$, we can check that $\varphi(t) \leq (\star)(t_0)$ if and only if $t\geq t_0$, and when $t<t_0$, then 
$$ (\star)(t) \leq \varphi(t) , $$ 
so that 
$$ \dkol(W,G) \leq \max\left\{ \varphi\left( \frac{\xi_\delta^3}{18\e} \sqrt{\frac{N}{D+1}} \right) , (\star)(t_0) \right\} . $$ 
To finish the proof of the Theorem, it only remains to compute 
\begin{align*} 
\varphi\left( \frac{\xi_\delta^3}{18\e} \sqrt{\frac{N}{D+1}} \right) 
= \left(\frac{1}{\delta}+1\right) \frac{18\e}{\xi_\delta^3} \left( \frac{C \sqrt{6\pi}}{6 \e \pi} + \frac{24}{\pi\sqrt{2\pi}}\right) \sqrt{\frac{D+1}{N}} 
\leq \frac{227.5}{\xi_\delta^3} \sqrt{\frac{D+1}{N}} 
 \ . 
\end{align*} 
\end{proof}

\subsection{Proof with no finite third moment} \label{sec:proof-3-} 
 
In this section, we prove Theorem \ref{thm:result-refined-delta<2}, which considers the case $\delta \in (2,3]$. The method is close to that of Section \ref{sec:proof-3+}. Recall $W=(S-\E[S])/v$, $G\sim \kN(0,1)$, and $\xi_\delta$ as in Definition \ref{def:renormedSD}. 
 
\begin{lemma}\label{lem:fourier-delta-23} 
Assume $\delta\in (2,3)$, and assume $v>0$. For every $s$ such that 
\begin{equation}\label{eq:cond-u-strong-23} 
|s| \leq \left(\frac{\xi_\delta}{2\e}\right)^{\frac{\delta}{\delta-2}} \left(\frac{4\e^2}{3+C/\e}\right)^{\frac{1}{\delta-2}} \sqrt{\frac{N}{D+1}} \ , 
\end{equation} 
we have 
\begin{equation} 
\left| \E\left[ \e^{\i s W} \right] - \e^{-\frac{s^2}{2}} \right| 
\leq \left(\frac{N}{D+1}\right)^{1-\frac{\delta}{2}} \frac{C + 3\e + 8\e^2}{8\e^3} \left( \frac{2\e|s|}{\xi_\delta} \right)^\delta . 
\end{equation} 
\end{lemma}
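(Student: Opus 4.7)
The plan is to invoke Lemma \ref{lem:zone-of-control-unbounded} directly, exploiting the fact that since $\delta\in(2,3)$ we have $\delta'=\min(3,\delta)=\delta$, so both occurrences of $\kA$-type terms on the right-hand side of that lemma collapse into a single factor $\kA_\delta/L^\delta$. The natural choice is to saturate the upper bound on $L$ by taking $L = v/(2\e|s|(D+1))$, which produces $w = L(D+1)|s|/v = 1/(2\e)$. With this choice the right-hand side of Lemma \ref{lem:zone-of-control-unbounded} becomes
\begin{equation*}
(D+1)\left|\E[\e^{\i s W}] - \e^{-s^2/2}\right| \;\le\; \left(\frac{1}{\e} + \frac{3}{8\e^2} + \frac{C}{8\e^3}\,\e^{(\cdots)}\right)\frac{\kA_\delta}{L^\delta},
\end{equation*}
and the bracket of constants sums to $(C + 3\e + 8\e^2)/(8\e^3)$ whenever we can show $\e^{(\cdots)} \le 1$.

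The only delicate point is verifying that the hypothesis \eqref{eq:cond-u-strong-23} on $|s|$ is precisely what is needed for the argument of that exponential to be non-positive. With $w = 1/(2\e)$, this reduces to
\begin{equation*}
\frac{v^2}{L^2} \;\ge\; \left(3+\frac{C}{\e}\right)(D+1)\frac{\kA_\delta}{L^\delta}.
\end{equation*}
Substituting $L = v/(2\e|s|(D+1))$ and rearranging, this becomes the bound
\begin{equation*}
|s|^{\delta-2} \;\le\; \frac{v^\delta}{\kA_\delta}\,\frac{1}{(2\e)^{\delta-2}(3+C/\e)(D+1)^{\delta-1}}.
\end{equation*}
From the definition of $\xi_\delta$ one derives the identity $v^\delta/\kA_\delta = \xi_\delta^\delta (N(D+1))^{\delta/2}/N$, so that the right-hand side above simplifies to $\xi_\delta^\delta (N/(D+1))^{\delta/2-1}/((2\e)^{\delta-2}(3+C/\e))$. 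Taking $(\delta-2)$-th roots (permissible since $\delta>2$) recovers exactly \eqref{eq:cond-u-strong-23}. Since this condition is strictly stronger than the condition \eqref{eq:cond-u} required to apply Lemma \ref{lem:zone-of-control-unbounded}, the latter is automatic.

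It then only remains to rewrite $\kA_\delta/L^\delta$ in the form appearing in the conclusion. Using the same identity for $\kA_\delta$,
\begin{equation*}
\frac{\kA_\delta}{L^\delta} \;=\; \kA_\delta\left(\frac{2\e|s|(D+1)}{v}\right)^\delta \;=\; (D+1)\,\frac{(2\e|s|)^\delta}{\xi_\delta^\delta}\left(\frac{N}{D+1}\right)^{1-\delta/2}.
\end{equation*}
Dividing through by $(D+1)$ yields the claimed bound. The proof is thus essentially bookkeeping once the choice of $L$ is made; the only obstacle worth mentioning is aligning the constraint making the exponential harmless with the explicit condition \eqref{eq:cond-u-strong-23}, which is a routine but unavoidable rearrangement.
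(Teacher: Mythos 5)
Your proposal is correct and mirrors the paper's own proof: both apply Lemma \ref{lem:zone-of-control-unbounded} with $\delta' = \delta$, saturate $L = v/(2\e|s|(D+1))$ so that $w = 1/(2\e)$, verify that \eqref{eq:cond-u-strong-23} makes the exponential factor at most $1$ (and implies \eqref{eq:cond-u}), and collect the constants into $(C + 3\e + 8\e^2)/(8\e^3)$. The algebra in your rearrangement of the condition and the rewriting of $\kA_\delta/L^\delta$ checks out.
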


\begin{proof} 
If $s=0$, then there is nothing to prove. Assume thus that $s\neq 0$. By Lemma \ref{lem:zone-of-control-unbounded}, for every $s$, for every $0 < L \leq v/(2\e |s| (D+1))$ that satisfies \eqref{eq:cond-u} (using $\delta'=\delta$), 
\begin{multline*} 
\frac{L^\delta}{\kA_\delta} (D+1) \left| \E\left[ \e^{\i s W} \right] - \e^{-\frac{s^2}{2}} \right| 
\ \leq 
2 w  + \frac{3}{2}w^2 \\ 
+ C w^3 \ \exp\left( -\frac{w^2}{2(D+1)^2} \left(\frac{v^2}{L^2} - 3 (D+1) \frac{\kA_{\delta}}{L^{\delta}} - 2C(D+1) \frac{\kA_\delta}{L^\delta} w \right)\right) . 
\end{multline*} 
Unlike for the case $\delta\geq 3$, the right-hand side may not be minimal when $L=v/(2\e |s| (D+1))$. Nevertheless, for the sake of simplicity we still take $L=v/(2\e |s| (D+1))$, giving $w=\frac1{2\e}$ and thus 
\begin{multline}\label{eq:almost-final-inequality-lem-delta-between-2-and-3} 
\frac{v^\delta}{(2\e \abs s (D+1))^\delta \kA_\delta} (D+1) \left| \E\left[ \e^{\i s W} \right] - \e^{-\frac{s^2}{2}} \right| 
\\ \leq 
\frac{1}{\e} + \frac{3}{8 \e^2} 
+ \frac{C}{8\e^3} \exp\left(-\frac1{8\e^2 (D+1)^2} \left((2\e\abs s (D+1))^2 - \frac{(2\e \abs s (D+1))^\delta \kA_\delta}{v^\delta} (D+1)\left(3+\frac C\e\right)\right)  \right) . 
\end{multline} 
Condition \eqref{eq:cond-u} becomes 
\begin{equation*} 
	(2\e\abs s(D+1))^{\delta-2}<\frac 1{3(D+1)}\frac{v^\delta}{\kA_\delta} \ . 
\end{equation*} 
But condition \eqref{eq:cond-u-strong-23} can be rewritten as 
\begin{equation*} 
(2\e \abs s (D+1))^{\delta-2} \leq \frac{1}{3 + C/\e}\xi_\delta^\delta (N(D+1))^{\frac{\delta-2}{\delta}} =\frac{1}{3+C/\e} \frac{1}{D+1}\frac{v^\delta}{\kA_\delta} \ . 
\end{equation*} 
In particular, \eqref{eq:cond-u} holds and 
\begin{equation*} 
	(2\e\abs s (D+1))^2 - \frac{(2\e \abs s (D+1))^\delta \kA_\delta}{v^\delta} (D+1)\left(3+\frac C\e\right)\ge0 \ , 
\end{equation*} 
so that from \eqref{eq:almost-final-inequality-lem-delta-between-2-and-3} 
\begin{equation*}\begin{split} 
	\abs{\E\left[ \e^{\i s W} \right]-\e^{-\frac{s^2}2}} &\le \frac{1}{D+1}\frac{(2\e\abs s (D+1))^\delta\kA_\delta}{v^\delta} \left(\frac 1\e+\frac 3{8\e^2}+\frac{C}{8\e^3}\right) \\&= \left(\frac 1\e+\frac 3{8\e^2}+\frac{C}{8\e^3}\right) \left(\frac{N}{D+1}\right)^{1-\frac{\delta}{2}} \left( \frac{2\e|s|}{\xi_\delta} \right)^\delta. 
\end{split}\end{equation*} 
\end{proof}

\begin{proof}[Proof of Theorem \ref{thm:result-refined-delta<2}] 
We follow the same method of proof as Theorem \ref{thm:dkol-delta-3+}. 
Recall \eqref{eq:dkol-feller}: 
\begin{align*} 
\dkol\left(W,G\right) \leq& \frac{1}{\pi} \int_{-t}^t \left| \E\left[\e^{\i s W}\right] - \e^{-\frac{s^2}{2}} \right| \frac{1}{|s|} \d s + \frac{24}{t\pi\sqrt{2\pi}} . 
\end{align*} 
Provided that $t$ satisfies \eqref{eq:cond-u-strong-23} (with $t$ replacing $|s|$), we can use Lemma \ref{lem:fourier-delta-23}: 
\begin{align} 
\nonumber \dkol\left(W,G\right) 
&\leq \frac{1}{\pi} \int_{-t}^t \frac{1}{|s|}  \left(\frac{N}{D+1}\right)^{1-\frac{\delta}{2}}  \frac{C + 3\e + 8\e^2}{8\e^3} \left( \frac{2\e|s|}{\xi_\delta} \right)^\delta \d s 
+ \frac{24}{t\pi\sqrt{2\pi}} \\ 
\label{eq:dkol-3--inter} &= \frac{C + 3\e + 8\e^2}{4\e^3 \pi\delta} \left(\frac{N}{D+1}\right)^{1-\frac{\delta}{2}} \left(\frac{2\e t}{\xi_\delta}\right)^\delta + \frac{24}{t\pi\sqrt{2\pi}} \ . 
\end{align} 
Write $(\star)$ for the right-hand side of \eqref{eq:dkol-3--inter}, seen as a function of $t$. It has a unique minimum $t_0$ over $(0,\infty)$, determined by cancelling its derivative: 
\begin{align*} 
\left(\frac{2\e}{\xi_\delta}\left(\frac{D+1}{N}\right)^{\frac 1 2 - \frac 1 \delta}\right)^{\delta} t_0^{\delta+1} = \frac{24}{\sqrt{2\pi}} \frac{4\e^3}{C + 3\e + 8\e^2} \\ 
\iff  t_0 = \left(\frac{\xi_\delta}{2\e}\right)^{\frac{\delta}{\delta+1}} \left( \frac{96\e^3}{(C + 3\e + 8\e^2)\sqrt{2\pi}} \right)^{\frac{1}{\delta+1}} \left(\frac{N}{D+1}\right)^{\frac{\delta-2}{2(\delta+1)}} \ , 
\end{align*} 
which gives, writing 
$$ B = \frac{96\e^3}{(C + 3\e + 8\e^2)\sqrt{2\pi}} \approx 10.62 \qquad , \qquad E = \frac{4\e^2}{3+C/\e} \ : $$ 
\begin{align} 
(\star)(t_0) 
&= \frac{\delta+1}{\delta}\frac{24}{\pi\sqrt{2\pi}} \left(\frac{\xi_\delta}{2\e}\right)^{-\frac{\delta}{\delta+1}} B^{-\frac{1}{\delta+1}} \left(\frac{D+1}{N}\right)^{\frac{\delta-2}{2(\delta+1)}} \ . \label{eq:star_0_23} 
\end{align} 
Let us define 
\begin{align}\label{eq:def-chi} 
\chi = \sup_{\delta\in(2,3)} \frac{\delta+1}{\delta}\frac{48\e}{\pi\sqrt{2\pi}} (2\e B)^{-\frac{1}{\delta+1}} \leq 8.015 \ . 
\end{align} 
Now write $t_1$ for the right-hand side of \eqref{eq:cond-u-strong-23}: we can check that 
\begin{align*} 
\dkol(W,G) \leq \inf \left\{ (\star)(t) \ : \ t\leq t_1 \right\} &= 
\begin{cases} 
 (\star)(t_1) &\text{ if } t_1 < t_0 \\ 
 (\star)(t_0) &\text{ otherwise} 
\end{cases} 
\end{align*} 
giving, by replacing $(\star)(t_0)$ by the upper bound obtained from \eqref{eq:star_0_23} and \eqref{eq:def-chi}, 
\begin{align}\label{eq:dkol-interm-disjonction-23} 
\dkol(W,G) \leq 
\begin{cases} 
 (\star)(t_1) &\text{ if } t_1 < t_0 \\ 
 \chi\left( \xi_\delta \left(\frac{N}{D+1}\right)^{\frac{1}{2}-\frac{1}{\delta}} \right)^{-\frac{\delta}{\delta+1}} &\text{ otherwise. } 
\end{cases} 
\end{align} 
We have $t_1 < t_0$ if and only if 
\begin{align} 
\nonumber &\left(\frac{\xi_\delta}{2\e}\right)^{\frac{\delta}{\delta+1}} B^{\frac{1}{\delta+1}} \left(\frac{N}{D+1}\right)^{\frac{\delta-2}{2(\delta+1)}}  >  E^{\frac{1}{\delta-2}} \left(\frac{\xi_\delta} {2\e}\right)^{\frac{\delta}{\delta-2}} \sqrt{\frac{N}{D+1}} \\ 
\nonumber \iff& \left(\frac{\xi_\delta}{2\e}\right)^{\delta-2} B^{\frac{\delta-2}{\delta}} \left(\frac{N}{D+1}\right)^{\frac{(\delta-2)^2}{2\delta}}  >   E^{\frac{\delta+1}{\delta}} \left(\frac{\xi_\delta} {2\e}\right)^{\delta+1} \left(\frac{N}{D+1}\right)^{\frac{(\delta-2)(\delta+1)}{2\delta}}\\ 
\nonumber \iff& \xi_\delta^3    <   B^{\frac{\delta-2}{\delta}} E^{-\frac{\delta+1}{\delta}} \left(\frac{N}{D+1}\right)^{\frac{(\delta-2)^2}{2\delta} - \frac{(\delta-2)(\delta+1)}{2\delta}} (2 \e)^3 \\ 
\nonumber \iff& \ \xi_\delta < 2\e\left[ B^{\delta-2} E^{-(\delta+1)} \right]^{\frac{1}{3\delta}} \cdot \left(\frac{N}{D+1}\right)^{\frac{1}{\delta}-\frac{1}{2}} , 
\end{align} 
in which case 
\begin{align*} 
\chi \left(\xi_\delta \left(\frac{N}{D+1}\right)^{\frac{1}{2}-\frac{1}{\delta}} \right)^{-\frac{\delta}{\delta+1}} &\geq \chi \left(2\e\left( B^{\delta-2} E^{-(\delta+1)} \right)^{\frac{1}{3\delta}}\right)^{\frac{-\delta}{\delta+1}} \geq 0.41973 \chi > 1 
\end{align*} 
provided that we choose $\chi \geq 2.3825$. 
This means that we can replace the right-hand side of \eqref{eq:dkol-interm-disjonction-23} by 
\begin{equation}\label{eq:final-rhs-23} 
\chi\left( \xi_\delta \left(\frac{N}{D+1}\right)^{\frac{1}{2}-\frac{1}{\delta}} \right)^{-\frac{\delta}{\delta+1}} \ . 
\end{equation} 
Indeed, if $t_1\geq t_0$ then \eqref{eq:final-rhs-23} is the right-hand side of \eqref{eq:dkol-interm-disjonction-23}, and if $t_1 < t_0$, then \eqref{eq:final-rhs-23} is larger than 1: since $\dkol(W,G)\leq 1$ we can replace the right-hand side of \eqref{eq:dkol-interm-disjonction-23} by any expression that is at least $1$, in particular by \eqref{eq:final-rhs-23}. 
This concludes the proof of the Theorem. 
\end{proof}

\printindex 
 
\defbibfilter{articles}{ 
	type=article or 
	type=misc 
} 
 
\section*{References} 
\subsection*{Articles} 
\printbibliography[filter=articles,heading=none] 
\subsection*{Books} 
\printbibliography[type=book,heading=none] 
 
\end{document}